\newcommand{\bbC}{\mathbb{C}}
\newcommand{\bbN}{\mathbb{N}}
\newcommand{\bbR}{\mathbb{R}}
\newcommand{\calL}{\mathcal{L}}
\newcommand{\rmE}{\operatorname{E}}
\newcommand{\fra}{\mathfrak{a}}
\newcommand{\suchthat}{\,|\,} 
\DeclareMathOperator{\one}{\mathbbm{1}} 
\DeclareMathOperator{\re}{Re} 
\DeclareMathOperator{\rg}{rg} 
\DeclareMathOperator{\dist}{dist} 
\DeclareMathOperator{\divergence}{div}
\newcommand{\argument}{\mathord{\,\cdot\,}} 
\newcommand{\dd}{\;\mathrm{d}} 
\newcommand{\norm}[1]{\left\lVert #1 \right\rVert} 
\newcommand{\modulus}[1]{\left\lvert #1 \right\rvert} 
\DeclareMathOperator{\dom}{dom} 
\newcommand{\restricted}[1]{|_{#1}}
\newcommand{\Cont}{\mathrm{C}}
\DeclareMathOperator{\Lip}{Lip} 
\newcommand{\spec}{\sigma} 
\newcommand{\spb}{\mathrm{s}} 
\newcommand{\Res}{\mathcal{R}} 
\theoremstyle{definition}
\newtheorem{definition}{Definition}[section]
\newtheorem{remark}[definition]{Remark}
\newtheorem{remarks}[definition]{Remarks}
\newtheorem{example}[definition]{Example}
\newtheorem{examples}[definition]{Examples}
\newtheorem{setting}[definition]{Setting}
\theoremstyle{plain}
\newtheorem{proposition}[definition]{Proposition}
\newtheorem{lemma}[definition]{Lemma}
\newtheorem{theorem}[definition]{Theorem}
\numberwithin{equation}{section}
\title[Smoothing and relatively bounded perturbations]{Smoothing of operator semigroups under relatively bounded perturbations}
\author{Sahiba Arora}
\address[Sahiba Arora]{Leibniz Universität Hannover, Institut für Analysis, Welfengarten 1, 30167 Hannover, Germany}
\email{sahiba.arora@math.uni-hannover.de}
\author{Jonathan Mui}
\address[Jonathan Mui]{University of Wuppertal, School of Mathematics and Natural Sciences, Gaußstr.\ 20, 42119 Wuppertal, Germany}
\email{jomui@uni-wuppertal.de}
\date{\today}
\keywords{perturbation of operator semigroups; ultracontractivity; eventual positivity; Miyadera-Voigt perturbation; perturbation of the spectrum}
\subjclass[2020]{34G10; 35B20; 35P05; 47A10; 47A55; 47B65; 47D06}
\begin{document}

\begin{abstract}
    We investigate a smoothing property for strongly-continuous operator semigroups, akin to ultracontractivity in parabolic evolution equations. Specifically, we establish the stability of this property under certain relatively bounded perturbations of the semigroup generator. This result yields a spectral perturbation theorem, which has implications for the long-term dynamics of evolution equations driven by elliptic operators of second and higher orders. In particular, a new perturbation theorem for so-called eventually positive semigroups is derived as a consequence of the general results.
\end{abstract}

\maketitle

\section{Introduction}

\subsection{Main results}

To motivate the main ideas of the paper, consider the Cauchy problem for the heat equation
\[
    \begin{aligned}
        \frac{\partial u}{\partial t} &= \Delta u \quad&&\text{on } (0,\infty)\times\bbR^n \\
        u(\argument,0) &= u_0 \quad &&\text{on }\bbR^n
    \end{aligned}
\]
with initial datum $u_0\in L^p(\bbR^n)$, $1\le p < \infty$. It is well known that 
the solution of the above PDE can be expressed by
\[
    u(t,x) = \int_{\bbR^n} K(t,x,y)u_0(y)\dd y, \quad (t,x)\in (0,\infty)\times\bbR^n;
\]
for an integral kernel $K(t,\argument,\argument)\in L^\infty(\bbR^n \times\bbR^n)$ satisfying an estimate of the form
\[
    \modulus{K(t,x,y)}\le Me^{\omega t} t^{-n/2}, \quad t>0, \; (x,y)\in\bbR^n\times\bbR^n
\]
where $M\ge 1$ and $\omega \in \bbR$. It is a classical part of PDE theory that such kernel estimates still hold when the Laplacian is replaced by a general second-order uniformly elliptic operator with bounded coefficients. The key functional-analytic tools to obtain these kernel estimates are the Dunford-Pettis criterion and the \emph{ultracontractivity} of the solution semigroup $(T(t))_{t\ge 0}$, namely the smoothing estimate
\[
    \norm{T(t)}_{\calL(L^1, L^\infty)} \le Me^{\omega t}t^{-n/2}, \quad t>0.
\]
This is a distinguishing feature of diffusion-type equations on Euclidean domains and Riemannian manifolds and has been studied extensively due to its close connections with important functional inequalities (e.g.\ Nash and Sobolev). As such, ultracontractivity appears as an important analytical tool for applications in spectral theory and geometry; see, for instance, \cite[Chapters 4 \& 5]{Davies1990} and~\cite[Chapter 6]{BGL}.

From the perspective of semigroup theory, it is natural to consider perturbations of the generator and to ask: which properties of the unperturbed semigroup carry over to the perturbed semigroup; for classical results in this direction, see \cite[Chapter~III]{EngelNagel} and \cite[Chapter~IX]{Kato}. For ultracontractivity, perturbations by singular potentials were investigated by Davies in~\cite{Davies1986-perturbation} using quadratic form methods and logarithmic Sobolev inequalities; see also \cite[Section~4.8]{Davies1990}, which contains applications to Schrödinger semigroups. Whereas Davies' treatment is restricted to semigroups generated by second-order elliptic operators (specifically, \emph{symmetric Markovian} semigroups), the present paper investigates the preservation of ultracontractivity for semigroups under unbounded perturbations in a significantly general setting (Theorem~\ref{thm:perturb-smoothing}). As an appetiser, we state the version for bounded perturbations, which is proved at the end of Section~\ref{sec:theorem-ultracontractive}.

\begin{theorem}[Ultracontractivity under bounded perturbations]
    \label{thm:ultracontractivity-introduction}
    Let $(T(t))_{t\ge 0}$ be a $C_0$-semigroup on a Banach space $X$ with generator $A$ such that $(T(t))_{t\ge 0}$ is \emph{ultracontractive with respect to a Banach space $V$}, i.e.,  $V$ embeds continuously into $X$ and there exist $C\ge 1$ and $\alpha>0$ such that
    \[
        T(t)X\subseteq V \quad \text{and}\quad \norm{T(t)}_{X\to V} \le Ct^{-\alpha} \quad \text{for all } t\in (0,1].
    \]
    Moreover, assume that $\norm{T(t)}_{V\to V} \le C$ for all $t\in (0,1]$.

    If $B:X\to X$ is a bounded operator leaving $V$ invariant, then the $C_0$-semigroup $(S(t))_{t\ge 0}$ generated by $A+B$ is also ultracontractive with respect to $V$, i.e., 
    \[
        S(t)X\subseteq V \quad \text{and}\quad \norm{S(t)}_{X\to V} \le \widetilde{C}t^{-\alpha} 
    \]
     for all $t\in (0,1]$ and for some $\widetilde{C}\ge 1$.
\end{theorem}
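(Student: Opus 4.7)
The natural starting point is the Duhamel (variation-of-parameters) formula
\[
    S(t)x = T(t)x + \int_0^t T(t-s)\,B\,S(s)x \dd s, \quad t \ge 0, \; x \in X,
\]
together with its iteration as the Dyson--Phillips expansion $S(t) = \sum_{n \ge 0} S_n(t)$, where $S_0 = T$ and $S_{n+1}(t) = \int_0^t T(t-s)\,B\,S_n(s)\dd s$; the series converges in $\calL(X)$-norm uniformly on compact time intervals. Since $B$ is bounded on $X$ and leaves the continuously embedded subspace $V$ invariant, the closed graph theorem yields that $B$ restricts to a bounded operator on $V$. Running the standard Dyson--Phillips argument \emph{within} $V$ (which is legitimate thanks to $\norm{T(t)}_{V \to V} \le C$) shows in addition that $V$ is invariant under every $S(t)$ and that $\norm{S(t)}_{V \to V}$ is bounded on $[0,1]$; this auxiliary fact will be needed at the end.

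The heart of the proof is an induction on $n$ establishing $S_n(t)X \subseteq V$ together with
\[
    \norm{S_n(t)}_{X \to V} \le C_n\, t^{-\alpha}, \quad t \in (0, T_0],
\]
for some $T_0 \in (0, 1]$ to be chosen and a summable sequence $(C_n)$. The base case is precisely the assumed ultracontractivity of $T$. For the inductive step I would split the defining integral of $S_{n+1}(t)$ at $s = t/2$: on $[0, t/2]$ one has $t-s \ge t/2$, so $T(t-s)$ is applied as a map $X \to V$ with $\norm{T(t-s)}_{X \to V} \le C(t/2)^{-\alpha}$, while $S_n(s)$ is controlled only in the $X$-norm via the standard bound $\norm{S_n(s)}_{\calL(X)} \le M^{n+1}\norm{B}_{\calL(X)}^n s^n/n!$ (with $M$ a uniform upper bound on $\norm{T(\argument)}_{\calL(X)}$ over $[0,1]$); on $[t/2, t]$ the inductive hypothesis places $S_n(s)x$ in $V$, the invariance of $V$ under $B$ keeps $BS_n(s)x$ there, and $\norm{T(t-s)}_{V \to V} \le C$ closes the loop. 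Evaluating the integrals $\int_0^{t/2}(t-s)^{-\alpha}s^n\dd s$ and $\int_{t/2}^t s^{-\alpha}\dd s$ and absorbing nonnegative powers of $t$ into $t^{-\alpha}$ (using $t \le 1$) produces a recursion of the form $C_{n+1} \le A_n + \gamma\, T_0\, C_n$, where $A_n = O\bigl((\norm{B}_{\calL(X)}M)^{n+1}/(n+1)!\bigr)$ is already summable and $\gamma$ depends only on $C$, $\norm{B}_{\calL(V)}$, and $\alpha$.

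The principal technical obstacle is that $\gamma$ is not \emph{a priori} less than $1$, so a global induction would leave us with geometrically growing constants $C_n$. I would overcome this by choosing $T_0$ small enough that $\gamma T_0 \le 1/2$, turning the recursion into a contraction and yielding $\sum_n C_n < \infty$; hence $\norm{S(t)}_{X \to V} \le \widetilde{C}\, t^{-\alpha}$ on the shortened interval $(0, T_0]$. Finally, I would propagate the estimate to the full range $(0, 1]$ via the semigroup identity $S(t) = S(t - T_0/2)\,S(T_0/2)$: the right factor is controlled by the ultracontractivity bound just established at $t = T_0/2$, and the left factor is bounded in $\calL(V)$ by the auxiliary estimate from the first paragraph. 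Readjusting the constants to accommodate the trivial time range $t \in (T_0/2, 1]$ then yields the desired conclusion.
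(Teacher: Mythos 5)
Your proof is correct, but it takes a genuinely different route from the paper. The paper obtains this theorem as a special case of its general result for Miyadera--Voigt perturbations (Theorem~\ref{thm:perturb-smoothing}): there, one first proves $\sup_{t\in[0,1]}\norm{S(t)}_{V\to V}<\infty$ via a Dyson--Phillips series in $\calL(V)$ (Lemma~\ref{lem:V-bounded}), then establishes an intermediate estimate $\norm{S(t)}_{V_\theta\to V}\lesssim t^{-\alpha\theta}$ on an interpolation space $V_\theta$ between $X$ and $V$ (Lemma~\ref{lem:interpolation-argument}), and finally runs a Coulhon-type bootstrap on $m_x=\sup_{t\in(0,1]}t^\alpha\norm{S(t)x}_V$ using the interpolation inequality $\norm{z}_{V_\theta}\le c_\theta\norm{z}_X^\theta\norm{z}_V^{1-\theta}$. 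Your argument instead estimates each Dyson--Phillips term in $\calL(X,V)$ directly by splitting the integral at $s=t/2$, using ultracontractivity of $T(t-s)$ on the first half against the factorial decay $\norm{S_n(s)}_{\calL(X)}=O(s^n/n!)$, and the inductive hypothesis plus $V$-boundedness of $T$ and $B$ on the second half; the resulting recursion $C_{n+1}\le A_n+\gamma T_0 C_n$ is made summable by shrinking $T_0$, and the estimate is propagated to $(0,1]$ by the semigroup law. Your route is more elementary (no interpolation theory), but it uses the boundedness of $B$ in an essential way -- both the $\calL(X)$-bound on $BS_n(s)$ and the absence of a singularity in $\norm{BT(s)}_{V\to V}$ -- and so would not extend to the unbounded perturbations covered by the paper's main theorem, which is precisely what the interpolation/extrapolation machinery buys. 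One point you should make explicit: in the inductive step the integral $\int_{t/2}^t T(t-s)BS_n(s)x\dd s$ is a priori an $X$-valued Bochner integral, and to place it in $V$ with the claimed $V$-norm bound you need either strong measurability of the integrand as a $V$-valued map or an argument as in the paper's Proposition~\ref{prop:bochner-V} (e.g.\ separability of $V$, or closedness of the unit ball of $V$ in $X$); the paper's own proof of this theorem asserts that for bounded $B$ this can be verified directly, but it does require a word.
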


The second focus of the paper is the spectral behaviour of ultracontractive semigroups under perturbations. Let us assume that the operators $A$ and $B$ on a Banach space $X$ fulfil the conditions stated in Theorem~\ref{thm:ultracontractivity-introduction}. If $\lambda_0$ is a pole of the resolvent $\Res(\argument,A)$, then it is well known -- even under more general conditions -- that for $\kappa$ in some $U\subseteq \bbC$, the operator $A+\kappa B$ also has a spectral value $\lambda(\kappa)$ that is a pole of the resolvent $\Res(\argument,A+\kappa B)$. Furthermore, if $P(\kappa)$ denotes the spectral projection of $\lambda(\kappa)$ associated to $A+\kappa B$, then the mapping
\[
    U \ni \kappa \mapsto P(\kappa) \in \calL(X)
\]
is analytic. However, it turns out that our assumptions even imply the analyticity of the above mapping with co-domain $\calL(V)$ (see Theorem~\ref{thm:analyticity-spectrum}). Furthermore, if $X$ has a nice order structure, then we are able to preserve a lower bound on eigenfunctions. This is made precise in the following theorem, which is a particular case of Theorem~\ref{thm:eigenfunction-lower-bound} below. The relevant terminology on Banach lattices is recalled in Section~\ref{sec:notation}.

\begin{theorem}
    \label{thm:analyticity-introduction}
    Let $(T(t))_{t\ge 0}$ be a $C_0$-semigroup on a complex Banach lattice $E$ whose generator $A$ is real. Let $0 \le u\in E$ be such that $(T(t))_{t\ge 0}$ is ultracontractive with respect to the principal ideal $E_u$.  Moreover, assume that there exists $C\ge 1$ such that $\norm{T(t)}_{E_u\to E_u} \le C$ for all $t\in (0,1]$ and let $B:E\to E$ be a bounded operator leaving $E_u$ invariant.
    
    If $\lambda_0 \in \spec(A)\cap \bbR$ is a pole of the resolvent $\Res(\argument, A)$ and an algebraically simple eigenvalue with an eigenvector satisfying $v_0\ge cu$ for some constant $c>0$, then there exists $\delta>0$ such that for all $\kappa \in (-\delta,\delta)$, the operator $A+\kappa B$ has an algebraically simple eigenvalue $\lambda(\kappa)$ with an eigenvector $v(\kappa)\ge (c/2) u$.
\end{theorem}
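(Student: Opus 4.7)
The plan is to combine the ultracontractivity persistence of Theorem~\ref{thm:ultracontractivity-introduction} with classical analytic spectral perturbation theory, and then to upgrade the convergence of the perturbed eigenvector from the ambient $E$-norm to the principal ideal norm $\norm{\argument}_u$ on $E_u$.

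First I would apply Theorem~\ref{thm:ultracontractivity-introduction} with $V = E_u$ to the bounded perturbation $\kappa B$: the semigroup $(S_\kappa(t))_{t\ge 0}$ generated by $A+\kappa B$ is ultracontractive with respect to $E_u$. Inspection of the proof of Theorem~\ref{thm:ultracontractivity-introduction} shows that the constants can be chosen uniformly in $\kappa$ on some $[-\delta_0,\delta_0]$, yielding $\norm{S_\kappa(t)}_{\calL(E, E_u)}\le \widetilde{C}\,t^{-\alpha}$ and $\norm{S_\kappa(t)}_{\calL(E_u, E_u)}\le \widetilde{C}$ for $t\in(0,1]$. Next, since $\lambda_0$ is an algebraically simple pole of $\Res(\argument,A)$ and $B$ is bounded, classical analytic perturbation theory (see e.g.~\cite[Ch.~IV.3]{Kato}) yields $\delta_1>0$, an analytic eigenvalue curve $\kappa\mapsto\lambda(\kappa)$ with $\lambda(0)=\lambda_0$ algebraically simple for $|\kappa|<\delta_1$, and an analytic family $\kappa\mapsto P(\kappa)\in\calL(E)$ of the associated rank-one spectral projections. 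I would then set $v(\kappa):=P(\kappa)v_0$; for $\kappa$ small, this is a nonzero eigenvector and $v(\kappa)\to v_0$ in $E$. The key observation is that the eigenspace is invariant under the semigroup, so $P(\kappa)=e^{-\lambda(\kappa)}S_\kappa(1)P(\kappa)$, which combined with the first step yields $v(\kappa)\in E_u$ with uniformly bounded $\norm{\argument}_u$-norm.

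The crux is upgrading $v(\kappa)\to v_0$ from $E$- to $E_u$-norm convergence. I would write
\[
    v(\kappa)-v_0 = e^{-\lambda(\kappa)}S_\kappa(1)\bigl(v(\kappa)-v_0\bigr) + \bigl(e^{-\lambda(\kappa)}S_\kappa(1)-e^{-\lambda_0}T(1)\bigr)v_0.
\]
The first summand is dominated in $\norm{\argument}_u$ by $|e^{-\lambda(\kappa)}|\widetilde{C}\norm{v(\kappa)-v_0}_E \to 0$. For the second, I would invoke the Duhamel identity
\[
    S_\kappa(t)v_0 = T(t)v_0 + \kappa\int_0^t T(t-s)B\,S_\kappa(s)v_0 \dd s,
\]
noting that the restriction of $B$ to $E_u$ is bounded (closed graph theorem, since $B$ is bounded on $E$ and leaves $E_u$ invariant). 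A Gronwall argument on $E_u$ then gives $\norm{S_\kappa(s)v_0}_u$ uniformly bounded for $s\in[0,1]$ and $|\kappa|\le\delta_0$, whence $\norm{(S_\kappa(1)-T(1))v_0}_u = O(\kappa)$; combining with $|e^{-\lambda(\kappa)}-e^{-\lambda_0}|\to 0$ finishes the second summand.

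Finally, once $\norm{v(\kappa)-v_0}_u<c/2$, the order-unit structure of $u$ in $E_u$ gives $|v(\kappa)-v_0|\le(c/2)u$, and hence $v(\kappa)\ge v_0-(c/2)u\ge(c/2)u$. That $v(\kappa)$ can be chosen real is standard in Banach-lattice perturbation theory under the convention that the perturbing operator respects the real structure of $E$, so that the whole analytic construction $P(\kappa)v_0$ produces a real element. The main obstacle is precisely the promotion from $E$- to $E_u$-norm convergence; the essential inputs are the uniform-in-$\kappa$ form of Theorem~\ref{thm:ultracontractivity-introduction} and the Duhamel--Gronwall estimate on the principal ideal.
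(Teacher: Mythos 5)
Your proposal is correct, and the overall skeleton coincides with the paper's: obtain $\lambda(\kappa)$ and $P(\kappa)$ from classical analytic perturbation theory, use the identity $P(\kappa)=e^{-\lambda(\kappa)}S_\kappa(1)P(\kappa)$ together with the uniform-in-$\kappa$ ultracontractivity of Theorem~\ref{thm:ultracontractivity-introduction} to place $v(\kappa)=P(\kappa)v_0$ in $E_u$, and then convert $\norm{v(\kappa)-v_0}_{E_u}<c/2$ into the order bound via the gauge norm. Where you genuinely diverge is the key step of upgrading from $E$- to $E_u$-convergence. The paper (Theorem~\ref{thm:analyticity-spectrum}, used in Theorem~\ref{thm:eigenfunction-lower-bound}) proves the stronger statement that $\kappa\mapsto P(\kappa)\in\calL(E_u)$ is \emph{analytic}, by verifying local boundedness in $\calL(E,E_u)$ and weak analyticity against the separating subspace $X'\subseteq V'$ and invoking the Arendt--Nikolskii theorem~\cite{ArendtNikolskii00}. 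You instead prove only continuity of $\kappa\mapsto v(\kappa)\in E_u$ at $\kappa=0$, directly, via the decomposition $v(\kappa)-v_0=e^{-\lambda(\kappa)}S_\kappa(1)(v(\kappa)-v_0)+\bigl(e^{-\lambda(\kappa)}S_\kappa(1)-e^{-\lambda_0}T(1)\bigr)v_0$ and a Duhamel estimate on $E_u$. This is more elementary and self-contained (no vector-valued analyticity machinery), at the cost of losing the analyticity of the projections in $\calL(E_u)$, which the paper obtains as a result of independent interest but which is not needed for the present statement. Two small points to tighten: (i) justify that the Duhamel integral converges as a Bochner integral in $E_u$ (or at least that its value lies in $E_u$ with the expected norm bound) --- this follows from Proposition~\ref{prop:bochner-V} since the closed unit ball of $E_u$ is closed in $E$ by Example~\ref{exas:closed-unit-ball}\ref{exas:closed-unit-ball:itm:principal-ideal}; and (ii) for the final order inequality you need $v(\kappa)$ to be a \emph{real} vector, which follows because $P(\kappa)$ is a real operator for real $\kappa$ (contour integral of the resolvent of the real operator $A+\kappa B$ over a contour symmetric about $\bbR$) applied to the positive vector $v_0$; a one-line remark rather than an appeal to ``standard'' facts would be preferable. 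The Gronwall step is actually superfluous, since the uniform bound $\sup_{s\in[0,1]}\norm{S_\kappa(s)}_{E_u\to E_u}<\infty$ is already delivered by (the proof of) Theorem~\ref{thm:ultracontractivity-introduction}.
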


The study of lower bounds on eigenfunctions in this abstract setting was motivated by recent works on \emph{eventually positive semigroups}; see~\cite{Glueck2022} for a survey. While semigroups arising from diffusion equations are \emph{positive}, in the sense that positive initial data $u_0 \ge 0$ yield positive solutions $u(t;u_0) \ge 0$ for all $t>0$, such a property fails if one considers semigroups arising from higher-order elliptic differential operators, or second-order operators with certain `exotic' (e.g.\ non-local) boundary conditions. Nevertheless, in these situations, one may still ask whether solutions are eventually positive, i.e.\ if there exists $t_0 \ge 0$ such that $u(t;u_0) \ge 0$ for all $t\ge t_0$.

A functional analytic framework to analyse this property was developed in~\cite{DanersGlueckKennedy2016a, DanersGlueckKennedy2016b, DanersGlueck2018b}, where it was also applied to a variety of PDE examples. In many cases, the smoothing property in Theorem~\ref{thm:ultracontractivity-introduction} and the lower bound on eigenfunctions in Theorem~\ref{thm:analyticity-introduction} are sufficient conditions used to establish the so-called `eventual strong positivity' of semigroups on Banach lattices. Thus, the stability of the aforementioned conditions under perturbations has direct implications for the perturbation theory of eventually positive semigroups, which until now is fairly limited.
Previous work~\cite{DanersGlueck2018a} has been restricted to bounded positive perturbations. Recently, the authors of~\cite{Pappu-etal} initiated a study of unbounded positive perturbations of eventually positive semigroups arising from delay differential equations. The assumptions in that article, however, are quite restrictive. In contrast, we are able to treat non-positive bounded perturbations and a much larger class of unbounded perturbations, which has a natural relationship to PDE problems. In this way, Section~\ref{sec:eventual-positivity} of the present work represents an important step forward in the perturbation theory of eventually positive semigroups.

\subsection{Organisation of the article}

In the remainder of this section, we recall some foundational concepts and fix our notation. The subsequent Section~\ref{sec:ultracontractive} is devoted to the proof of the main perturbation result Theorem~\ref{thm:perturb-smoothing}, from which the assertions of Theorem~\ref{thm:ultracontractivity-introduction} follow. The proof of Theorem~\ref{thm:perturb-smoothing} uses a technical result about Bochner integrals that is contained in Appendix~\ref{appendix:bochner}.
We then investigate how the spectrum of an ultracontractive semigroup behaves under analytic perturbations in Section~\ref{sec:analytic}, in particular, providing a proof of Theorem~\ref{thm:analyticity-introduction}. While perturbations of the spectrum have been extensively studied by Kato in \cite{Kato}, for the convenience of the reader and to provide a precise formulation tailored to our needs, we summarise in Appendix~\ref{appendix:spectrum} how the spectrum behaves under relatively bounded perturbations. Applications of our results to elliptic operators and eventually positive semigroups are presented in Sections~\ref{sec:applications} and~\ref{sec:eventual-positivity} respectively.

\subsection{Notation}
    \label{sec:notation}
Throughout the paper, we assume the reader is familiar with the theory of $C_0$-semigroups; see \cite{EngelNagel, Pazy}. The space of bounded linear operators between Banach spaces $X$ and $Y$ is denoted by $\calL(X,Y)$ with the shorthand $\calL(X)\coloneqq\calL(X,X)$. The range of an operator $T\in \calL(X,Y)$ is denoted by $\rg T$. For a closed operator $A : \dom(A)\subset X\to X$ on a Banach space $X$, we write $A':\dom(A')\subset X'\to X'$ for the Banach space dual of $A$ whenever $A$ is densely defined.
Moreover, the spectrum and resolvent set of $A$ are denoted by $\spec(A)$ and $\rho(A)\coloneqq \bbC \setminus \spec(A)$ respectively. Also, $\Res(\lambda,A)\coloneqq (\lambda- A)^{-1}$ denotes the \emph{resolvent of $A$} at a point $\lambda \in \rho(A)$. 
A spectral value $\lambda_0 \in \spec(A)$ is called a \emph{pole of the resolvent} $\Res(\argument,A)$ if the analytic mapping $\lambda \mapsto \Res(\lambda,A)$ has a pole at $\lambda_0$.

If $\lambda_0$ is a pole of the resolvent $\Res(\argument,A)$, then it is also an eigenvalue of $A$. The eigenvalue $\lambda_0$ is called \emph{algebraically simple} if the dimension of the \emph{generalised eigenspace} associated to $\lambda_0$,
$
    \bigcup_{m\in\bbN} \ker[(\lambda_0-A)^m]
$
is one. Note that if $P$ denotes the spectral projection of $A$ associated to the pole $\lambda_0$, then $\rg P$ coincides with the generalised eigenspace of $\lambda_0$.

When studying the long-term behaviour of the semigroup, we freely use the theory of Banach lattices, for which we refer to the classical monographs~\cite{Schaefer, Meyer-Nieberg1991}. 
Recall that a \emph{complex Banach lattice} is by definition the complexification of a Banach lattice over $\bbR$. We denote the positive cone and the real part of a complex Banach lattice $E$ by $E_+$ and $E_{\bbR}$ respectively.
An operator $A:\dom(A)\subseteq E\to E$ is said to be \emph{real} if 
\[
	\dom(A) = \dom(A) \cap E_{\bbR}+ i \dom(A) \cap E_{\bbR} \quad \text{and} \quad A(\dom(A) \cap E_{\bbR}) \subseteq E_{\bbR}.
\]
For $u \in E_+$, the \emph{principal ideal generated by $u$} is the Banach space
\[
\label{eq:Eu-definition}
    E_u \coloneqq \{ x\in E ~\colon \text{there exists }c>0 \text{ with }\modulus{x} \le cu\}
\]
equipped with the \emph{gauge norm}
\[
    \norm{\argument}_{E_u}\coloneqq \inf\{ c>0 : \modulus{\argument} \le cu\}.
\]
The space $E_u$ always embeds continuously into $E$ and if this embedding is even dense, then we call $u$ a \emph{quasi-interior point of $E_+$}. 

Let $p\in [1,\infty]$ and let $E=L^p(\Omega,\mu)$ for some $\sigma$-finite measure space $(\Omega,\mu)$. Then $u \in E_+$ if and only if $u(\omega)\ge 0$ almost everywhere. If $p<\infty$, then $u$ is a quasi-interior point of $E_+$ if and only if one has $u(\omega)>0$ almost everywhere. For $p=\infty$, the quasi-interior points are exactly those that satisfy $u\ge c\one_{\Omega}$ for some $c>0$. Analogously, if $E=\Cont(K)$ for some compact space $K$, then $u\in E_+$ if and only if $u(\omega)\ge 0$ for all $\omega \in K$ and it is a quasi-interior point of $E_+$ if and only if $u(\omega)>0$ for every $\omega \in K$. In this case, we even have $E=E_u$ and the quasi-interior points are precisely the interior points of the positive cone $E_+$. Note that (for instance) in infinite-dimensional $L^p$ spaces for $1\le p < \infty$, the positive cone has empty interior, which shows that quasi-interior points are the more natural concept in general Banach lattices.

Additionally, for a set $A$, the function $\one_A$ denotes the constant function with value one on $A$.
Further notation and terminology are introduced as needed.

\section{Robustness of ultracontractivity under unbounded perturbations}
    \label{sec:ultracontractive}

In this section, we investigate sufficient conditions for preserving the smoothing property of semigroups under the so-called \emph{Miyadera--Voigt} perturbations.  We begin by establishing the foundational framework in Section~\ref{sec:setting-ultracontractive}, introducing and elaborating upon the pertinent conditions. Thereafter, the sufficiency of these conditions is demonstrated in Section~\ref{sec:theorem-ultracontractive}.

\subsection{Setting up}
    \label{sec:setting-ultracontractive}

Throughout the article, we consider the following setting. For a closed operator $A:\dom(A)\subseteq X\to X$, the interpolation space $X_1$ is the Banach space $\dom(A)$ equipped with the graph norm.

\begin{setting}
    \label{setting:admissible}
    Let $(T(t))_{t\ge 0}$ be a $C_0$-semigroup on a Banach space $X$ with generator $A:\dom(A)\subseteq X\to X$, let $V$ be a Banach space that embeds continuously into $X$, and let $B:\dom(B)\subseteq X\to X$ be an operator such that $\dom(A) \subseteq \dom(B)$ and the restriction
    $B\vert_{X_1} \in\calL(X_1,X)$. In the sequel, we consider the following conditions:
    \begin{enumerate}[\upshape (S1)]
        \item\label{assume:it:ultracontractive}\emph{Ultracontractivity}: There exist constants $C\ge 1$ and $\alpha>0$ such that
        \begin{equation}
        \label{assume:eq:ultracontractive}
            T(t)X\subseteq V \quad\text{and}\quad\|T(t)\|_{X\to V}\le Ct^{-\alpha}
        \end{equation}
        for all $t\in (0,1]$.
    
        \item\label{assume:it:admissibleX} For some $q\in (0,1)$ and $t_0 > 0$, it holds that
        \begin{equation}
        \label{assume:eq:zero-class}
        \int_0^{t_0} \|BT(s)x\|_X \dd s \le q\|x\|_X \qquad\text{for all }x\in \dom(A).
        \end{equation}
        
        \item\label{assume:it:Vbounds} There exist constants $C\ge 1$ and $\beta\in [0,1)$ such that
        \begin{align}
            &\|T(t)\|_{V\to V} \le C \text{ for all }t\in (0,1] \label{assume:eq:Vbound-1},\\
            &BT(t)x\in V \quad\text{and} \quad\norm{BT(t)x}_V \le Ct^{-\beta}\norm{x}_V \label{assume:eq:Vbound-2}
        \end{align}
        whenever $x\in V$ and $t\in (0,1]$ is such that $T(t)x\in \dom(B)$.
    \end{enumerate}
    Without loss of generality, the constant $C\ge 1$ is chosen to be the same in all the estimates~\eqref{assume:eq:ultracontractive}--\eqref{assume:eq:Vbound-2}.
    For the Banach space $V$, we consider one of the following:
    \begin{enumerate}[\upshape (B1)]
        \item\label{assume:bochner-V:itm:separable} 
        The Banach space $V$ is separable. 

        \item\label{assume:bochner-V:itm:closed-ball} 
        The closed unit ball of $V$ is closed in $X$.
    \end{enumerate}
\end{setting}

A few general remarks about the conditions~\ref{assume:it:ultracontractive}--\ref{assume:it:Vbounds} introduced in Setting~\ref{setting:admissible} are collected below, and examples of situations where the condition~\ref{assume:bochner-V:itm:closed-ball} is fulfilled are provided in Appendix~\ref{appendix:bochner}.

\begin{remarks}
    \label{rem:setting}
    (a) Setting~\ref{setting:admissible} admits bounded operators $B\in\mathcal{L}(X)$ which leave $V$ invariant. Indeed, such a $B$ restricts to a bounded operator on $V$ by the closed graph theorem, and thus the compatibility estimate~\eqref{assume:eq:Vbound-2} holds with $\beta=0$ and for all $x\in V$ and all $t\in (0,1]$. In addition,~\eqref{assume:eq:zero-class} is clearly satisfied if $t_0$ is chosen sufficiently small.

    (b) Condition~\ref{assume:it:admissibleX} is particularly relevant in systems and control theory:~given a Banach space $Y$ and a $C_0$-semigroup $(T(t))_{t\ge 0}$ on a Banach space $X$, an \emph{observation operator} $B\in \calL(X_1,Y)$ is called \emph{zero-class $L^1$-admissible} if there exists a function $\tilde{q} : [0,\infty) \to [0,\infty)$ such that $\lim_{t \downarrow 0} \tilde q(t)=0$ and
    \[
        \int_{0}^t \norm{BT(s)x}_Y \dd s \le \tilde q(t) \norm{x}_X \qquad \big(x \in \dom(A)\big).
    \]
    In particular, zero-class $L^1$-admissibility implies~\eqref{assume:eq:zero-class}, which in turn
    ensures that $B$ is $A$-bounded (see Appendix~\ref{appendix:spectrum} for a definition) with $A$-bound strictly less than $1$; see \cite[Exercise~III-2.18(2)]{EngelNagel}. Sufficient conditions for $L^1$-admissibility of observation operators using order properties were recently given in \cite{AroraGlueckPaunonenSchwenninger2025}, see, in particular, \cite[Corollary~5.3 and Appendix~A]{AroraGlueckPaunonenSchwenninger2025}. Its relation with the dual property -- zero-class admissibility of control operators -- can be found in \cite[Theorem~6.9]{Weiss1989a} and \cite{AroraSchwenninger2025}.

    (c) We have called condition~\ref{assume:it:ultracontractive} `ultracontractivity' since it is an abstraction of the classical notion as seen in, for instance, the well-known monograph of Davies~\cite[Chapter 2]{Davies1990} where $X=L^2(\Omega)$ for a bounded open set $\Omega\subseteq\bbR^n$ and $V=L^\infty(\Omega)\hookrightarrow X$. Then in many cases, e.g.\ when $(T(t))_{t\ge 0}$ is a symmetric Markovian semigroup, the exponent in~\eqref{assume:eq:ultracontractive} is explicitly known to be $\alpha = \frac{\mu}{4}$, where $\mu>0$ is the parameter appearing in the Sobolev-type inequality
    \begin{equation*}
        \|u\|_{L^{2\mu/(\mu-2)}(\Omega)} \le C \|(-A)^{1/2}u\|_{L^2(\Omega)} \quad\text{for all } u\in \dom((-A)^{1/2}).
    \end{equation*}
    An overview of key results in the classical theory can be found in~\cite[Section 7]{Arendt2004-survey}. A characterisation of this property in terms of the domain of the generator was recently given in \cite[Theorem~4.1]{AroraGlueck23}.
\end{remarks}

Condition~\ref{assume:it:admissibleX} of Setting~\ref{setting:admissible} guarantees that $B$ is a \emph{Miyadera--Voigt} perturbation~\cite[p.\ 196]{EngelNagel}; cf. Remark~\ref{rem:perturb-smoothing}(b). It is therefore the key to ensure that $A+B$ is also a semigroup generator and that the perturbed semigroup can be represented with the variation of parameters formula. Being a fundamental tool for the main results in this section, we record this and other important facts in the proposition below. The proof can be found,  for instance, in~\cite[Corollary~III.3.15]{EngelNagel}.
\begin{proposition}
    \label{prop:perturbed-semigroup}
    In Setting~\ref{setting:admissible}, assume that $B$ satisfies condition~{\upshape\ref{assume:it:admissibleX}}. Then $A+B$ defined on $\dom(A+B)\coloneqq \dom(A)$ generates a strongly continuous semigroup $(S(t))_{t\ge 0}$ on $X$ that satisfies the \emph{Dyson-Phillips series}
    \begin{equation}
        \label{eq:dyson-phillips}
        S(t) = \sum_{k=0}^\infty S_k(t), \qquad S_{k+1}(t)x \coloneqq \int_0^t S_k(t-s)BT(s)x\dd s
    \end{equation}
    with $S_0(t):=T(t)$
    for all $t\ge 0$ and $x\in \dom(A)$, where the series converges in $\calL(X)$.
    In addition, $S(t)$ can be represented by the \emph{variation of parameters} formula
    \begin{equation}
        \label{eq:variation-param}
        S(t)x = T(t)x + \int_0^t S(t-s)BT(s)x\dd s
    \end{equation}
    for all $x\in \dom(A)$ and $t\ge 0$.
\end{proposition}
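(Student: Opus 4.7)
My plan is to mimic the standard proof of the Miyadera--Voigt perturbation theorem (as in \cite[Section~III.3.c]{EngelNagel}) since condition~\ref{assume:it:admissibleX} is exactly the hypothesis of that theorem. The core of the argument is to define the Dyson--Phillips series first on $\dom(A)$, show that it converges in $\calL(X)$ by a geometric estimate, and then identify the limit as the semigroup generated by $A+B$.

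First I would show inductively that each $S_k(t)$ extends to a bounded operator on $X$ and that the series converges in operator norm on an interval $[0,t_0]$. The crucial estimate is the following: using~\eqref{assume:eq:zero-class} together with the definition of $S_{k+1}(t)$, one obtains by induction that
\[
    \int_0^{t_0} \norm{B S_k(s) x}_X \dd s \le q^{k+1} \norm{x}_X \qquad (x \in \dom(A)),
\]
and subsequently $\norm{S_k(t)}_{\calL(X)} \le M q^k$ for some constant $M \ge 1$, uniformly for $t \in [0,t_0]$. This yields absolute convergence of the series on $[0,t_0]$, and extension to $[0,\infty)$ follows once the semigroup law is established.

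Next I would verify that $(S(t))_{t \ge 0}$ is a $C_0$-semigroup. Strong continuity at $t=0$ follows from the fact that $S_0(t) = T(t)$ is strongly continuous and $\sum_{k \ge 1} \norm{S_k(t)}$ is small for small $t$. The semigroup property $S(t+s) = S(t)S(s)$ can be checked term by term on $\dom(A)$ using the recursive definition, then extended to all of $X$ by density. To identify the generator, I would fix $x \in \dom(A)$, differentiate the variation of parameters identity $S(t)x = T(t)x + \int_0^t S(t-s) B T(s) x \dd s$ at $t=0$, and conclude that the generator of $(S(t))_{t\ge 0}$ has domain containing $\dom(A)$ and agrees with $A+B$ there; the reverse inclusion of domains follows because $A+B$ is already a generator candidate and the resolvent sets intersect.

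The main technical obstacle is the rigorous justification of the integrals $\int_0^t S_k(t-s) B T(s) x \dd s$ and of~\eqref{eq:variation-param} when $x \notin \dom(A)$: $B T(s) x$ need not make sense pointwise, so one must interpret the integrand via the admissibility estimate~\eqref{assume:eq:zero-class} and a density argument. However, as noted in the statement, a clean treatment is already available in \cite[Corollary~III.3.15]{EngelNagel}, so rather than reproducing the full argument I would simply invoke that result after verifying that condition~\ref{assume:it:admissibleX} matches its hypothesis.
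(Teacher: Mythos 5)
Your proposal is correct and takes essentially the same route as the paper: the paper's proof of this proposition consists precisely of observing that condition~\ref{assume:it:admissibleX} is the Miyadera--Voigt hypothesis and citing \cite[Corollary~III.3.15]{EngelNagel}, which is exactly your final step. The preliminary sketch of the geometric estimates and generator identification is a faithful outline of the standard argument behind that corollary, so nothing is missing.
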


\begin{remark}
    \label{rem:variation-of-parameters}
    In Proposition~\ref{prop:perturbed-semigroup}, if $(B, \dom(B))$ is closed, then by \cite[Corollary~III.3.16]{EngelNagel} for every $x\in X$, it holds that $T(t)x, S(t)x\in \dom(B)$ for a.e.\ $t\ge 0$ and the formulae~\eqref{eq:dyson-phillips} and~\eqref{eq:variation-param} extend to $x\in X$.
\end{remark}

\subsection{Preserving ultracontractivity under unbounded perturbations}
    \label{sec:theorem-ultracontractive}

The main result of this section states that the ultracontractivity condition is stable under perturbation by the class of unbounded operators introduced in Setting~\ref{setting:admissible}.
\begin{theorem}
    \label{thm:perturb-smoothing}
    In Setting~\ref{setting:admissible}, assume that conditions~\ref{assume:it:ultracontractive}--\ref{assume:it:Vbounds} hold and that $V$ satisfies at least one of the conditions~\ref{assume:bochner-V:itm:separable} and~\ref{assume:bochner-V:itm:closed-ball}. 
    
    If $(T(t))_{t\ge 0}$ is immediately differentiable, then there exists a constant $\widetilde{C}>0$ such that the perturbed semigroup $(S(t))_{t\ge 0}$ generated by $A+B$ satisfies
    \begin{equation}
    \label{eq:thm:perturb-smoothing}
        S(t)X\subseteq V \quad\text{and}\quad\|S(t)\|_{X\to V}\le \widetilde Ct^{-\alpha}
    \end{equation}
    for all $t\in (0,1]$.
\end{theorem}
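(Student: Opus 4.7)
The natural strategy is to exploit the Dyson--Phillips series furnished by Proposition~\ref{prop:perturbed-semigroup},
\[
    S(t) = \sum_{k=0}^\infty S_k(t), \qquad S_{k+1}(t) x = \int_0^t S_k(t-s) B T(s) x \dd s, \qquad S_0 = T,
\]
and to prove by induction on $k$ that each iterate $S_k(t)$ belongs to $\calL(X, V)$ on $(0, 1]$ and satisfies
\[
    \|S_k(t)\|_{V \to V} \le d_k \qquad \text{and} \qquad \|S_k(t)\|_{X \to V} \le c_k t^{-\alpha},
\]
with summable sequences $(c_k)$ and $(d_k)$. Summing the $X \to V$ bounds then yields~\eqref{eq:thm:perturb-smoothing}.

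For the $V \to V$ bounds, immediate differentiability of $(T(t))_{t \ge 0}$ together with $\dom(A) \subseteq \dom(B)$ ensures $T(s) y \in \dom(B)$ for every $y \in V$ and every $s > 0$, so~\eqref{assume:eq:Vbound-2} applies unconditionally and gives $\|B T(s) y\|_V \le C s^{-\beta} \|y\|_V$. Combined with the inductive bound on $S_k$ one obtains the recursion $D_{k+1}(t) \le C \int_0^t D_k(t-s) s^{-\beta} \dd s$, whose iterated solution is expressible in terms of Beta functions and yields $d_k = O\bigl(\Gamma(1 + k(1-\beta))^{-1}\bigr)$ on $(0,1]$; this is summable precisely because $\beta < 1$.

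For the $X \to V$ bounds I would fix $x \in \dom(A)$ (so that $B T(s) x$ is unambiguously defined for every $s > 0$) and split the defining integral at $s = \eta t$ with a parameter $\eta \in (0, 1)$. On $(0, \eta t)$ the $X \to V$ norm of $S_k(t - s)$ is dominated by $(1-\eta)^{-\alpha} c_k t^{-\alpha}$, while Miyadera's condition~\eqref{assume:eq:zero-class} controls $\int_0^{\eta t} \|B T(s) x\|_X \dd s$ by $q \|x\|_X$; this produces a contribution of size $(1-\eta)^{-\alpha} q \cdot c_k t^{-\alpha} \|x\|_X$. On $(\eta t, t)$ I would rewrite $B T(s) x = B T(s/2) \bigl( T(s/2) x \bigr)$: ultracontractivity~\ref{assume:it:ultracontractive} places $T(s/2) x$ in $V$ and then~\eqref{assume:eq:Vbound-2} yields $\|B T(s) x\|_V \lesssim s^{-(\alpha+\beta)} \|x\|_X$, while the $V \to V$ bound on $S_k(t-s)$ handles the outer factor. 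A short Beta-function calculation, using $\beta < 1$ so that the remaining $t$-exponent is at most $-\alpha$, produces a contribution of the form $\gamma_k d_k t^{-\alpha} \|x\|_X$ with $\gamma_k$ bounded in $k$. The resulting recursion $c_{k+1} \le (1-\eta)^{-\alpha} q \cdot c_k + \gamma_k d_k$ is summable as soon as $(1-\eta)^{-\alpha} q < 1$, which can be arranged by choosing $\eta$ small enough since $q < 1$. The temporary restriction $\eta t \le t_0$ is then removed via the semigroup identity $S(t) = S(t - \tau) S(\tau)$ and $\calL(X)$-boundedness of $(S(t))_{t \in [0,1]}$, propagating the estimate to all of $(0, 1]$.

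The main obstacle is upgrading these inductive bounds from $\dom(A)$ to all of $X$: the iterates $S_k(t)$ are \emph{a priori} only bounded on $X$, and one must verify both that the Bochner integrals defining them actually take values in $V$ and that the $V$-valued map thus produced is strongly measurable. This is precisely the role of conditions~\ref{assume:bochner-V:itm:separable}/\ref{assume:bochner-V:itm:closed-ball}: under~\ref{assume:bochner-V:itm:separable}, the separability of $V$ unlocks the measurability transfer result of Appendix~\ref{appendix:bochner}; under~\ref{assume:bochner-V:itm:closed-ball}, the $X$-closedness of the $V$-unit ball allows a direct limiting argument whereby a $V$-bounded sequence converging in $X$ inherits a $V$-limit of the same bound. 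Beyond this, the most delicate computational point is the joint balancing of the two singular kernels $(t-s)^{-\alpha}$ and $s^{-(\alpha+\beta)}$ on the inner interval, where the strict inequalities $\alpha > 0$ and $\beta < 1$, together with the freedom to choose $\eta$ small granted by $q < 1$, are exactly what keep the Beta-function constants under control.
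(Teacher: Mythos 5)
Your proposal is correct, but it takes a genuinely different route from the paper. You share the first ingredient, namely the inductive $V\to V$ bounds $\|S_k(t)\|_{V\to V}\le d_k$ on the Dyson--Phillips iterates with Mittag--Leffler-type summability (this is exactly Lemma~\ref{lem:V-bounded}), but from there the paper never estimates the iterates in $\calL(X,V)$. Instead it introduces an interpolation space $V_\theta=\mathfrak{F}_{1-\theta}(X,V)$ with $\alpha\theta<1-\beta$, proves $\|S(t)x\|_{V}\lesssim t^{-\alpha\theta}\|x\|_{V_\theta}$ from the variation-of-parameters formula (Lemma~\ref{lem:interpolation-argument}), and then runs Coulhon's extrapolation trick: writing $S(t)=S(t/2)S(t/2)$ and using $\|z\|_{V_\theta}\le c_\theta\|z\|_X^\theta\|z\|_V^{1-\theta}$, the quantity $m_x=\sup_{t\in(0,1]}t^\alpha\|S(t)x\|_V$ (finite by the $V\to V$ bound) satisfies $m_x\le C_2m_x^{1-\theta}\|x\|_X^\theta$, hence $m_x\le C_2^{1/\theta}\|x\|_X$. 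Your term-by-term induction with the split at $s=\eta t$ replaces all of this: the outer piece becomes contractive in $k$ once $(1-\eta)^{-\alpha}q<1$ (arrangeable since $q<1$, and by taking $\eta\le t_0$ the restriction $\eta t\le t_0$ is automatic for $t\le 1$), while the inner piece uses $\|BT(s)x\|_V\lesssim s^{-(\alpha+\beta)}\|x\|_X$ integrated only over $(\eta t,t)$, so no integrability condition on $\alpha+\beta$ is needed, and the resulting recursion $c_{k+1}\le rc_k+\gamma d_k$ is summable. Your approach buys a proof free of interpolation theory with explicit constants; the price is that it exploits the numerical value $q<1$ in~\ref{assume:it:admissibleX} quantitatively, whereas the paper uses~\ref{assume:it:admissibleX} only to invoke the Miyadera--Voigt generation theorem (which is why the paper's argument transfers verbatim to general Miyadera--Voigt perturbations, cf.\ Remark~\ref{rem:perturb-smoothing}(b)). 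Two minor points: your asymptotics $d_k=O\bigl(\Gamma(1+k(1-\beta))^{-1}\bigr)$ omits the geometric factor $C^{k+1}\Gamma(1-\beta)^k$, which is harmless since the Gamma function in the denominator still forces summability; and the extension of the bounds from $\dom(A)$ to $X$ does not actually need~\ref{assume:bochner-V:itm:separable} or~\ref{assume:bochner-V:itm:closed-ball} --- a linear map bounded from a dense subspace of $X$ into the complete space $V$ sends Cauchy sequences to $V$-Cauchy sequences, and the resulting $V$-limit must agree with $S_k(t)x$ because both are $X$-limits of the same sequence. Those conditions are needed only where you correctly place them, in Proposition~\ref{prop:bochner-V}, to conclude that the Bochner integrals land in $V$ with the right norm bound.
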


The proof of this theorem is somewhat technical. We isolate a part of the proof in Lemma~\ref{lem:V-bounded} which shows that the perturbed semigroup is also well-behaved on $V$. In the rest of this section, the constant $C$ is always the same constant as in Setting~\ref{setting:admissible}. 
Recall that the \emph{Mittag-Leffler function} $\rmE_\alpha$ with parameter $\alpha>0$ is defined by its series expansion
\[
    \rmE_\alpha(z) \coloneqq \sum_{k=0}^\infty \frac{z^k}{\Gamma(k\alpha + 1)};
\]
where $\Gamma(\argument)$ denotes the usual \emph{Gamma function}.
Clearly, $\rmE_1$ coincides with the exponential function $z\mapsto e^z$. Moreover, one can show that the series converges for all $z\in\bbC$ (and thus $E_\alpha$ is an entire function).

\begin{lemma}
    \label{lem:V-bounded}
    Assume that the conditions~\ref{assume:it:admissibleX} and~\ref{assume:it:Vbounds} in Setting~\ref{setting:admissible} hold and that $V$ satisfies at least one of the conditions~\ref{assume:bochner-V:itm:separable} and~\ref{assume:bochner-V:itm:closed-ball}. In addition, suppose that either $(T(t))_{t\ge 0}$ is immediately differentiable or $(B, \dom(B))$ is closed.
    
    Then the perturbed semigroup also $(S(t))_{t\ge 0}$ leaves $V$ invariant and satisfies
    \[
        \|S(t)\|_{V\to V} \le C \rmE_{1-\beta} \left(C\Gamma(1-\beta)t^{1-\beta}\right) \quad\text{for all } t\in [0,1].
    \]
\end{lemma}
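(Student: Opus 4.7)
The plan is to exploit the Dyson--Phillips series from Proposition~\ref{prop:perturbed-semigroup}, prove by induction on $k$ that each iterate $S_k(t)$ leaves $V$ invariant with an explicit quantitative bound, and then sum the resulting power series, recognising it as the Mittag--Leffler function $\rmE_{1-\beta}$.

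Before starting the induction, I need the integrand $s\mapsto BT(s)x$ appearing in the recursive definition of $S_{k+1}(t)$ to be well-defined almost everywhere for every $x\in V$. Under immediate differentiability one has $T(s)x\in\dom(A)\subseteq\dom(B)$ for every $s>0$ and $x\in X$; if instead $B$ is closed, Remark~\ref{rem:variation-of-parameters} supplies both the a.e.\ inclusion $T(s)x\in\dom(B)$ and the extension of~\eqref{eq:dyson-phillips} to all of $X$. In both cases the iterates $S_k(t)x$ are defined through Bochner integrals in $X$.

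The inductive claim is
\[
    \|S_k(t)\|_{V\to V}\le C\,\frac{\bigl(C\Gamma(1-\beta)\,t^{1-\beta}\bigr)^k}{\Gamma(k(1-\beta)+1)}\quad\text{for all } t\in[0,1].
\]
The base case $k=0$ is precisely~\eqref{assume:eq:Vbound-1}. For the step, I apply~\eqref{assume:eq:Vbound-2} to bound $\|BT(s)x\|_V\le Cs^{-\beta}\|x\|_V$ and combine it with the inductive estimate on $\|S_k(t-s)\|_{V\to V}$; the resulting integrand is $L^1$-dominated in the $V$-norm, and the ensuing estimate reduces to the standard Beta-function integral $\int_0^t (t-s)^{k(1-\beta)}s^{-\beta}\dd s$, whose Gamma-function factors telescope into exactly the constant predicted at level $k+1$. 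Summing over $k$ yields $C\,\rmE_{1-\beta}\bigl(C\Gamma(1-\beta)t^{1-\beta}\bigr)$.

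The hard part will be the transfer of the integral from $X$ to $V$: a priori the Bochner integral defining $S_{k+1}(t)x$ lives in $X$, and one must argue that it actually lies in $V$ and that the integral of the $V$-norms controls its $V$-norm. This is precisely the role of the alternative hypotheses~\ref{assume:bochner-V:itm:separable} and~\ref{assume:bochner-V:itm:closed-ball}: both are designed exactly so that the Bochner-integral result of Appendix~\ref{appendix:bochner} applies, promoting the pointwise $V$-estimate on the integrand to the desired $V$-valued bound on the integral. Once that lemma is in place, the rest of the argument is the routine Volterra-type iteration sketched above.
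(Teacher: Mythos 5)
Your proposal follows essentially the same route as the paper's proof: the Dyson--Phillips expansion, the induction on $k$ with exactly the bound $C\,(C\Gamma(1-\beta)t^{1-\beta})^k/\Gamma(k(1-\beta)+1)$, the Beta-function evaluation of $\int_0^t(t-s)^{k(1-\beta)}s^{-\beta}\,\mathrm{d}s$, the invocation of Proposition~\ref{prop:bochner-V} via conditions~\ref{assume:bochner-V:itm:separable}/\ref{assume:bochner-V:itm:closed-ball} to lift the Bochner integral from $X$ to $V$, and the final summation into $\rmE_{1-\beta}$. The only detail you leave implicit is the observation that $V$-invariance of $S(t)$ for $t>1$ follows from the semigroup law; otherwise the argument matches.
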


We point out that Lemma~\ref{lem:V-bounded} does not impose any ultracontractivity assumptions on the unperturbed semigroup.
The proof of the lemma uses the following observation:
    \begin{equation}
    \label{eq:integral-convolution}
        \int_0^t (t-s)^{\alpha-1} s^{\beta-1} ds 
        = t^{\alpha + \beta - 1} \frac{\Gamma(\alpha) \Gamma(\beta)}{\Gamma(\alpha+\beta)}.
    \end{equation}
for all $\alpha, \beta > 0$ and all $t > 0$.
This can be verified by direct calculation, or alternatively in a more functional-analytic manner as follows: since the left-hand side is the convolution of the functions $(t \mapsto t^{\alpha-1})$ and $(t \mapsto t^{\beta-1})$, its Laplace transform is the product of the Laplace transforms of the functions, i.e., 
  $
        (\argument)^{-(\alpha+\beta)}\Gamma(\alpha) \Gamma(\beta).
  $
Taking inverse Laplace transforms yields the claim.

\begin{proof}[Proof of Lemma~\ref{lem:V-bounded}]
    Since either $(T(t))_{t\ge 0}$ is immediately differentiable or the operator $(B, \dom(B))$ is closed, the compatibility condition~\eqref{assume:eq:Vbound-2} holds for all $x\in V$ and a.e. $t\ge 0$; cf. Remark~\ref{rem:variation-of-parameters}. Moreover, by Proposition~\ref{prop:perturbed-semigroup} and Remark~\ref{rem:variation-of-parameters}, the perturbed semigroup is given by the Dyson-Phillips series
    \begin{equation}
        \label{eq:Dyson-Phillip-proof}
        S(t) = \sum_{k=0}^\infty S_k(t), \qquad S_{k+1}(t)x \coloneqq \int_0^t S_k(t-s)BT(s)x\dd s,
    \end{equation}
    with $S_0(t)=T(t)$ for all $t\ge 0$ and $x\in X$; where the series converges in $\calL(X)$ and the integral converges in the Bochner sense in $X$. We show that for all $t\in [0,1]$, the series $S(t) = \sum_{k=0}^\infty S_k(t)$ converges absolutely in the operator norm of $V$.

    Let us establish inductively for all non-negative integers $k$ that, for all $t\ge 0$
    \begin{equation}
    \label{eq:V-bound-induction}
        S_k(t)V \subseteq V\quad\text{and}\quad \|S_k(t)\|_{V\to V} \le \frac{C^{k+1}\Gamma(1-\beta)^k}{\Gamma(k(1-\beta)+1)}t^{k(1-\beta)}.
    \end{equation}
    For $k=0$, the claim reads $\|S_0(t)\|_{V\to V} \le C$ for all $t\in [0,1]$, which is the assumption~\eqref{assume:eq:Vbound-1} because $S_0(t) = T(t)$. Assume now that~\eqref{eq:V-bound-induction} holds for some integer $k\ge 0$ and
    let $x\in V$. Using the compatibility condition~\eqref{assume:eq:Vbound-2} and the induction hypothesis, we can estimate
    \begin{equation}
        \label{eq:induction-hypothesis-intermediate}
        \|S_k(t-s)BT(s)x\|_{V} \le \frac{C^{k+2}\Gamma(1-\beta)^k}{\Gamma(k(1-\beta)+1)} (t-s)^{k(1-\beta)} s^{-\beta} \norm{x}_V
    \end{equation}
    for a.e.\ $s\in (0,t)$. As the term on the left is integrable and $V$ fulfils at least one out of the conditions~\ref{assume:bochner-V:itm:separable} and~\ref{assume:bochner-V:itm:closed-ball}, Proposition~\ref{prop:bochner-V} together with the recurrence relation~\eqref{eq:Dyson-Phillip-proof} show that $S_{k+1}(t)x \in V$ and satisfies the norm estimate
    \begin{align*}
        \norm{S_{k+1}(t)x}_V & \le \frac{C^{k+2}\Gamma(1-\beta)^k}{\Gamma(k(1-\beta)+1)}\int_0^t (t-s)^{k(1-\beta)} s^{-\beta} \dd s\norm{x}_V\\
                            & = \frac{C^{k+2}\Gamma(1-\beta)^{k+1}}{\Gamma((k+1)(1-\beta)+1)} t^{(k+1)(1-\beta)}\norm{x}_V;
    \end{align*}
    where the equality is obtained using
    \[
        \int_0^t (t-s)^{k(1-\beta)}s^{-\beta}\dd s = 
            t^{(k+1)(1-\beta)} \frac{\Gamma\big(k(1-\beta)+1 \big) \Gamma(1-\beta)}{\Gamma\big( (k+1)(1-\beta) +1 \big)}
    \]
    which holds due to formula~\eqref{eq:integral-convolution}.
    This completes the induction step.

    Lastly, thanks to~\eqref{eq:Dyson-Phillip-proof} and~\eqref{eq:V-bound-induction}, for $t\in (0,1]$ we can now estimate
    \begin{align*}
        \|S(t)\|_{V\to V} 
            \le C \sum_{k=0}^\infty \frac{(C\Gamma(1-\beta)t^{1-\beta})^k}{\Gamma(k(1-\beta)+1)} 
            = \rmE_{1-\beta}\big(C\Gamma(1-\beta)t^{1-\beta}\big).
    \end{align*}
    For $t>1$, the invariance of $V$ under $S$ follows from the semigroup law. 
\end{proof}

The next step in the proof of Theorem~\ref{thm:perturb-smoothing} is an interpolation argument. We refer to~\cite{Lunardi2018} for standard terminology on interpolation methods for Banach spaces.

\begin{lemma}
    \label{lem:interpolation-argument}
    In Setting~\ref{setting:admissible}, assume that conditions~\ref{assume:it:ultracontractive}--\ref{assume:it:Vbounds} hold and that $V$ satisfies at least one of the conditions~\ref{assume:bochner-V:itm:separable} and~\ref{assume:bochner-V:itm:closed-ball}. In addition, suppose that either $(T(t))_{t\ge 0}$ is immediately differentiable or $(B, \dom(B))$ is closed.

    Let $\theta\in (0,1)$ be such that $\alpha\theta < 1-\beta$, let $\mathfrak{F}_{1-\theta}$ be an interpolation method of exponent $1-\theta$, and consider the interpolation space $V_\theta \coloneqq \mathfrak{F}_{1-\theta}(X,V)$. Then the perturbed semigroup $(S(t))_{t\ge 0}$ maps $V_\theta \cap \dom(A)$ into $V$ and
    \begin{equation}
    \label{eq:S-Vtheta-V}
        \|S(t)x\|_{V} \le C_1 t^{-\alpha\theta} \|x\|_{V_\theta} \quad\text{for all } x\in V_\theta \cap \dom(A),\quad t\in (0,1]
    \end{equation}
    for  $C_1\coloneqq C\left(1+\frac{2M_0 C}{1-(\alpha\theta+\beta)}\right)$ and some $M_0\ge 1$.
\end{lemma}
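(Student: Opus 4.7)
The plan is to apply the variation of parameters formula from Proposition~\ref{prop:perturbed-semigroup} and estimate the two resulting terms separately, relying on abstract interpolation in the pair $(X,V)$ together with Lemma~\ref{lem:V-bounded} to control the perturbed semigroup on $V$.

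First I would take $x\in V_\theta\cap \dom(A)$ and write
\[
    S(t)x = T(t)x + \int_0^t S(t-s)BT(s)x\dd s,
\]
which holds by Proposition~\ref{prop:perturbed-semigroup} since $x\in\dom(A)$. For the free term, combining the bounds $\|T(t)\|_{X\to V}\le Ct^{-\alpha}$ and $\|T(t)\|_{V\to V}\le C$ from~\ref{assume:it:ultracontractive} and~\eqref{assume:eq:Vbound-1} through the exponent-$(1-\theta)$ interpolation functor $\mathfrak{F}_{1-\theta}$ yields $T(t)\in\calL(V_\theta,V)$ with norm at most $M_0 (Ct^{-\alpha})^\theta C^{1-\theta}=M_0 C\,t^{-\alpha\theta}$, where $M_0$ is the interpolation constant of $\mathfrak{F}_{1-\theta}$.

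The main work lies in estimating $\|BT(s)x\|_V$ in terms of $\|x\|_{V_\theta}$ with an $s$-power that is still integrable. For the endpoint bounds I would proceed as follows. On the $V$-side, condition~\eqref{assume:eq:Vbound-2} directly gives $\|BT(s)x\|_V\le Cs^{-\beta}\|x\|_V$ (immediate differentiability or closedness of $B$ ensures the requirement $T(s)x\in\dom(B)$ is satisfied automatically for a.e.\ $s$). On the $X$-side, I would use the semigroup decomposition $BT(s)x=BT(s/2)T(s/2)x$: by ultracontractivity $T(s/2)x\in V$ with $\|T(s/2)x\|_V\le C(s/2)^{-\alpha}\|x\|_X$, and then the $V$-bound on $BT(s/2)$ applies to produce
\[
    \|BT(s)x\|_V\le C^2 2^{\alpha+\beta}\,s^{-\alpha-\beta}\|x\|_X
\]
for all $x\in\dom(A)$ and $s\in(0,1]$. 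Applying $\mathfrak{F}_{1-\theta}$ once more to $BT(s)$, now with $M_0$ replaced by the endpoint bounds just derived, one obtains $\|BT(s)x\|_V\le K\, s^{-\alpha\theta-\beta}\|x\|_{V_\theta}$ for some constant $K$ that is polynomial in $C$, $M_0$, $2^{\alpha+\beta}$.

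Finally, Lemma~\ref{lem:V-bounded} gives a uniform bound $\|S(t-s)\|_{V\to V}\le C_V$ for $t\in[0,1]$. Plugging the above estimates into the variation of parameters formula and integrating yields
\[
    \int_0^t \|S(t-s)BT(s)x\|_V\dd s \le \frac{C_V K}{1-\alpha\theta-\beta}\,t^{1-\alpha\theta-\beta}\|x\|_{V_\theta},
\]
where the integrability is exactly ensured by the hypothesis $\alpha\theta<1-\beta$. Since $t\in(0,1]$ and $1-\beta>0$, one has $t^{1-\alpha\theta-\beta}=t^{1-\beta}\cdot t^{-\alpha\theta}\le t^{-\alpha\theta}$, so both terms admit the desired singularity $t^{-\alpha\theta}$. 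Collecting the constants gives the estimate~\eqref{eq:S-Vtheta-V}.

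The main obstacle is the bookkeeping for the interpolation step on $BT(s)$: one must verify that $BT(s)$ really extends to a bounded operator on both endpoints $X$ and $V$ (using density of $\dom(A)$ in $X$, and immediate differentiability or closedness of $B$ to get full definition on $V$), so that the abstract interpolation theorem for $\mathfrak{F}_{1-\theta}$ may be invoked. Once this technical point is settled, the exponent count $-\alpha\theta-\beta$ falls out mechanically and the hypothesis $\alpha\theta<1-\beta$ is exactly what makes the resulting convolution integral converge.
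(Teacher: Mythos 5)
Your overall strategy coincides with the paper's: variation of parameters, an interpolation estimate $\|T(t)\|_{V_\theta\to V}\le Ct^{-\alpha\theta}$ for the free term, the decomposition $BT(s)x=BT(s/2)T(s/2)x$ to get an integrand bound of order $s^{-(\alpha\theta+\beta)}$, and Lemma~\ref{lem:V-bounded} to control $S(t-s)$ on $V$. The only organisational difference is that you interpolate the composite operator $BT(s):V_\theta\to V$ between two endpoint bounds, whereas the paper interpolates only $T(s/2):V_\theta\to V$ and then applies the $V\to V$ estimate \eqref{assume:eq:Vbound-2} to the specific element $T(s/2)x$. The paper's arrangement is slightly safer: in the case where $B$ is merely closed (not $T$ immediately differentiable), Remark~\ref{rem:variation-of-parameters} only gives $T(s)x\in\dom(B)$ for a.e.\ $s$ with the exceptional set depending on $x$, so $BT(s)$ need not be a globally defined bounded operator on $V$ for a \emph{fixed} $s$, and the operator-level interpolation you propose requires exactly the extension argument you flag as "the main obstacle" but do not carry out. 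Working pointwise with $x\in\dom(A)$ (so that $T(s)x\in\dom(A)\subseteq\dom(B)$ for all $s$) avoids this entirely.

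The genuine gap is at the very end: you "plug the estimates into the variation of parameters formula and integrate," but the integral $\int_0^t S(t-s)BT(s)x\dd s$ is a priori only a Bochner integral in $X$. Knowing that the integrand lies in $V$ for a.e.\ $s$ with an integrable $V$-norm majorant does \emph{not} by itself imply that the integral lies in $V$, nor that its $V$-norm is bounded by the integral of the majorant — the integrand need not be strongly measurable as a $V$-valued function. This is precisely why the lemma assumes \ref{assume:bochner-V:itm:separable} or \ref{assume:bochner-V:itm:closed-ball}, and the paper invokes Proposition~\ref{prop:bochner-V} at this point (separability of $V$, or closedness of the unit ball of $V$ in $X$, each suffices to transfer the integral and the norm estimate to $V$). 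Your proof never uses these hypotheses, so the step concluding $\int_0^t S(t-s)BT(s)x\dd s\in V$ is unjustified as written; citing Proposition~\ref{prop:bochner-V} closes the gap.
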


\begin{proof}
    First of all, Lemma~\ref{lem:V-bounded} implies that there exists a constant $M_0\ge 1$ so that
    \begin{equation}
    \label{eq:S-bounded-on-V}
        \sup_{t\in [0,1]}\|S(t)\|_{V\to V} \le M_0.
    \end{equation}
    Also, interpolation between the estimates on $(T(t))_{t\ge 0}$ in~\eqref{assume:eq:ultracontractive} and~\eqref{assume:eq:Vbound-1} yields
    \begin{equation}
        \label{eq:interpolation-inequality-semigoup}
        \|T(t)\|_{V_\theta\to V} \le C^\theta t^{-\alpha\theta} C^{1-\theta} = Ct^{-\alpha\theta}
    \end{equation}
    for $t\in (0,1]$; see, for instance, \cite[Theorem~2.6]{Lunardi2018}. 
    
    Next, let $x\in V_\theta \cap \dom(A)$ and fix $t\in (0,1)$. Since the unperturbed semigroup leaves $X_1$ invariant, the function
    \begin{equation*}
        f_t:(0,t)\to X, \qquad s\mapsto S(t-s)BT(s)x
    \end{equation*}
    is continuous.
    Furthermore, because the unperturbed semigroup maps into $V$ and $x\in \dom(A)$, the compatibility assumption~\eqref{assume:eq:Vbound-2} ensures that
    \[
        \norm{BT(s)x}_V = \|BT(s/2)T(s/2)x\|_V \le C2^{\beta}s^{-\beta}\norm{T(s/2)x}_V
    \]
    for all $s\in (0,t]$.
    Combining this with the uniform bound~\eqref{eq:S-bounded-on-V} and the interpolation property~\eqref{eq:interpolation-inequality-semigoup}, we infer that
    \begin{align*}
        \|f_t(s)\|_V &\le \norm{S(t-s)}_{V\to V}\norm{BT(s)x}_V\\
        & \le M_0 C2^\beta s^{-\beta} \|T(s/2)\|_{V_\theta\to V}\|x\|_{V_\theta} \\
        &\le M_0C^2 2^{\alpha\theta+\beta}s^{-(\alpha\theta+\beta)}\|x\|_{V_\theta}
    \end{align*}
    for all $s\in (0,t]$. Since $\alpha\theta < 1-\beta$ and at least one of the conditions~\ref{assume:bochner-V:itm:separable} and~\ref{assume:bochner-V:itm:closed-ball} holds, Proposition~\ref{prop:bochner-V} now shows that $\int_0^t f_t(s)\dd s$ lies in $V$ with the norm estimate
    \[
        \norm{\int_0^t f_t(s)\dd s}_V \le 2^{\alpha\theta+\beta}M_0C^2  \int_0^t s^{-(\alpha\theta+\beta)}\dd s \norm{x}_{V_\theta}=\frac{2^{\alpha\theta+\beta}M_0 C^2}{1-(\alpha\theta+\beta)} t^{1-(\alpha\theta+\beta)} \|x\|_{V_\theta}.
    \]
    Together with the variation of parameters formula~\eqref{eq:variation-param}, the previous upper bound, the interpolation property \eqref{eq:interpolation-inequality-semigoup}, and the choice of $\theta$ enable us to estimate
    \begin{align*}
        \|S(t)x\|_V &\le Ct^{-\alpha\theta}\|x\|_{V_\theta} + \frac{2^{\alpha\theta+\beta}M_0 C^2}{1-(\alpha\theta+\beta)} t^{1-(\alpha\theta+\beta)} \|x\|_{V_\theta} \\
        &\le C\left(1 + \frac{2M_0 C}{1-(\alpha\theta+\beta)}\right)t^{-\alpha\theta}\|x\|_{V_\theta};
    \end{align*}
    where the second inequality uses $1-\beta>0$ and $t\in (0,1]$.
\end{proof}

\begin{proof}[Proof of Theorem~\ref{thm:perturb-smoothing}]
     Let $\theta\in (0,1)$ satisfy $\alpha\theta < 1-\beta$, and fix an interpolation method $\mathfrak{F}_{1-\theta}$ and interpolation space $V_\theta$ of exponent $1-\theta$ as in Lemma~\ref{lem:interpolation-argument}. Fixing arbitrary $x\in V\cap \dom(A) \subseteq V_\theta \cap \dom(A)$, Lemma~\ref{lem:V-bounded} ensures that
    \[
        m_x \coloneqq \sup_{t\in (0,1]} t^{\alpha}\|S(t)x\|_V
    \]
    is finite. Let us deduce an estimate for $m_x$. Firstly, we observe that
    \[
        \|S(t)x\|_V = \|S(t/2)S(t/2)x\|_V \le 2^{\alpha \theta} C_1 t^{-\alpha\theta}\|S(t/2)x\|_{V_\theta}
    \]
    holds for $t\in (0,1]$ 
    due to~\eqref{eq:S-Vtheta-V} from Lemma~\ref{lem:interpolation-argument}. Since $V_\theta$ is an interpolation space of exponent $1-\theta$, we have the abstract interpolation inequality
    \begin{equation*}
        \|z\|_{V_\theta} \le c_\theta \|z\|^\theta_X \|z\|^{1-\theta}_V \qquad\text{for all }z\in V.
    \end{equation*}
    The preceding two inequalities together imply that
    \begin{align*}
        t^{\alpha}\|S(t)x\|_V &\le 2^{\alpha\theta} C_1 c_\theta \cdot M_1^\theta t^{\alpha(1-\theta)} \|S(t/2)x\|^{1-\theta}_V \|x\|_X^{\theta} \\
        &= C_2 \left(\frac{t}{2}\right)^{\alpha(1-\theta)}\|S(t/2)x\|_V^{1-\theta}\|x\|_X^{\theta};
    \end{align*}
    where $M_1$ is the uniform bound of the perturbed semigroup $(S(t))_{t\ge 0}$ on $X$ for $t\in [0,1]$ and $C_2 \coloneqq 2^{\alpha}C_1 c_\theta M_1^\theta$. Taking supremum over $t\in (0,1]$, it follows that
    \begin{equation*}
        m_x \le C_2 m_x^{1-\theta}\|x\|_X^\theta
    \end{equation*}
    which implies that $m_x \le C_2^{1/\theta}\|x\|_X$ for all $x\in V\cap \dom(A)$. Therefore, setting $\widetilde{C} = C_2^{1/\theta}$, we find for all $x\in V\cap \dom(A)$ and $t\in (0,1]$ that
    \begin{equation*}
        \|S(t)x\|_V \le \widetilde{C} t^{-\alpha}\|x\|_X.
    \end{equation*}
    
    Finally, note that $V\cap \dom(A)$ is dense in $X$. Indeed, for every $x\in \dom(A)$, we have $T(t)x\in V$ for all $t>0$, and $T(t)x\to x$ as $t\downarrow 0$ in the norm of $X$ by strong continuity of the semigroup. Since $\dom(A)$ is dense in $X$, the claim follows. Hence the estimate above extends to all $x\in X$, as desired.
\end{proof}

\begin{remarks} 
    \label{rem:perturb-smoothing}
    (a) Since Lemmata~\ref{lem:V-bounded} and~\ref{lem:interpolation-argument} also hold if $(B, \dom(B))$ is closed, the assumption of immediate differentiability in Theorem~\ref{thm:perturb-smoothing} can therefore be dropped if $(B,\dom(B))$ is closed.

    (b) As Proposition~\ref{prop:perturbed-semigroup} is true for all \emph{Miyadera-Voigt perturbations} (see \cite[Corollary~III.3.15]{EngelNagel}), so Theorem~\ref{thm:perturb-smoothing} also holds if, instead of the assumption~\ref{assume:it:admissibleX}, we more generally assume that $B$ is a Miyadera-Voigt perturbation. Likewise, for immediately differentiable semigroups,  Lemmata~\ref{lem:V-bounded} and~\ref{lem:interpolation-argument} also hold for general Miyadera-Voigt perturbations.
    
    (c) The proof of Theorem~\ref{thm:perturb-smoothing} is an abstract variation of an extrapolation argument that goes back to Coulhon~\cite[Lemme 1]{Coulhon1990}; see also~\cite[p.\ 66]{Arendt2004-survey}. Observe that the interpolation space $V_\theta$ is merely used as a technical tool to obtain desired estimates, and plays no role in the conclusion of the theorem.
\end{remarks}

We can now prove the result stated in Theorem~\ref{thm:ultracontractivity-introduction}. In particular, this shows that the conditions~\ref{assume:bochner-V:itm:separable} or~\ref{assume:bochner-V:itm:closed-ball} in Setting~\ref{setting:admissible} can be dropped from Theorem~\ref{thm:perturb-smoothing} if $B$ is a $V$-invariant bounded operator.

\begin{proof}[Proof of Theorem~\ref{thm:ultracontractivity-introduction}]
    As already explained in Remark~\ref{rem:setting}(a), it is a consequence of the closed graph theorem that $B$ restricts to a bounded operator on $V$, and thus satisfies~\ref{assume:it:admissibleX} and~\ref{assume:it:Vbounds} with $\beta = 0$ and for all $t>0$. Moreover, conditions~\ref{assume:bochner-V:itm:separable} or~\ref{assume:bochner-V:itm:closed-ball} are not required in this case. Indeed, one can establish directly that for all $x\in V$, the integral in the variation of parameters formula~\eqref{eq:variation-param} exists as a Bochner integral in $V$. Consequently, in the proof of Lemma~\ref{lem:interpolation-argument}, one can take $x\in V$ already (instead of the more technical argument requiring $x\in V_\theta\cap\dom(A)$ and Proposition~\ref{prop:bochner-V}). Finally, the immediate differentiability of the unperturbed semigroup $(T(t))_{t\ge 0}$ is not required due to Remark~\ref{rem:perturb-smoothing}(a). Hence the rest of the proof of Theorem~\ref{thm:perturb-smoothing} carries through.
\end{proof}

\begin{remark}[Including a perturbation parameter]
\label{rmk:perturb-with-parameter}
    In perturbation theory, it is common to introduce a parameter $\kappa\in U\subseteq\bbC$, where $U$ is a small neighbourhood of $0$, and study the perturbed operators $A(\kappa) \coloneqq A+\kappa B$. 
    
    Consider operators $A,B$ as in Setting~\ref{setting:admissible} and assume that conditions~\ref{assume:it:ultracontractive}--\ref{assume:it:Vbounds} hold. For $\modulus{\kappa}\le 1$, clearly  $\kappa B$ also satisfies all the relevant conditions in Setting~\ref{setting:admissible}, and moreover one can choose a constant $C\ge 1$ that is independent of $\kappa$. It is then easy to check that Theorem~\ref{thm:perturb-smoothing} holds for the perturbed semigroups $(T_\kappa(t))_{t\ge 0}$ generated by $A(\kappa)$ and crucially, the estimate~\eqref{eq:thm:perturb-smoothing} holds with a constant $\widetilde{C}>0$ independent of $\modulus{\kappa}\le 1$. We make use of these observations in the sequel.
\end{remark}

\section{Analytic perturbations of the spectrum}
    \label{sec:analytic}

This section concerns the analytic dependence of eigenvalues and eigenvectors on the perturbation parameter $\kappa$ for an analytic family of operators $\{A(\kappa)\}_{\kappa}$. While such problems belong to classical perturbation theory, the novelty of our result lies in the stronger conclusions obtained when the operators involved are semigroup generators, and the unperturbed semigroup satisfies additional smoothing conditions as described in Setting~\ref{setting:admissible}. 

The open disk in $\bbC$ with centre $\lambda_0$ and radius $r>0$ is denoted by $B(\lambda_0,r)$.
The standard assumptions in this section are collected in the following:

\begin{setting}
    \label{setting:analytic}
    Let $(T(t))_{t\ge 0}$ be an immediately differentiable $C_0$-semigroup on a Banach space $X$ with generator $A:\dom(A)\subseteq X\to X$ and let $V$ be a Banach space embedding continuously into $X$.
    Additionally, consider a family of operators $\{ B(\kappa) ~\colon \kappa\in B(0,1)\subset\bbC \} \subset \calL(X_1,X)$ with $B(0):=0\restricted{X_1}$.
    We assume the following:
    \begin{itemize}
        \item At least one of the conditions~\ref{assume:bochner-V:itm:separable} and~\ref{assume:bochner-V:itm:closed-ball} on $V$ in Setting~\ref{setting:admissible} is fulfilled.

        \item For each $\kappa$, the conditions~\ref{assume:it:ultracontractive}--\ref{assume:it:Vbounds} in Setting~\ref{setting:admissible} are fulfilled with the operator $B(\kappa)$ instead of $B$.

        \item The mapping $B(0,1)\ni\kappa \mapsto A(\kappa)x$ is holomorphic for each $x\in \dom(A)$; where $A(\kappa)\coloneqq A+B(\kappa)$.
    \end{itemize}
\end{setting}

\begin{remarks}
\label{rem:holomorphic-family}
    (a) The families $\{A(\kappa)\}_{\kappa}$ considered in Setting~\ref{setting:analytic} are known as \emph{holomorphic families of type} (A) in \cite[Section~VII-2]{Kato}. We focus on such perturbations due to their common occurrence in applications -- the condition being relatively easy to verify -- and the fact that we later use perturbations of the form $A(\kappa) = A + \kappa B$, where $A$ and $B$ are as described in Setting~\ref{setting:admissible}. However, as is clear from the proofs below, the results in this section are also valid for holomorphic families of perturbations as defined in \cite[p.\ 366]{Kato}.

    (b) It is also clear from the proof below that Theorem~\ref{thm:eigenfunction-lower-bound}(b) holds for the perturbed operator $A+B$ whenever $\hat{\delta}(A+B,A)$, the \emph{gap} between $A+B$ and $A$, is sufficiently small (see Appendix~\ref{appendix:spectrum}). 
\end{remarks}

\begin{theorem}
    \label{thm:analyticity-spectrum}
    In Setting~\ref{setting:analytic},
    suppose that $\lambda_0\in\spec(A)$ is a pole of the resolvent $\Res(\argument,A)$ and an algebraically simple eigenvalue, let $r>0$ be such that $\overline{B(\lambda_0,r)}\cap\spec(A) = \{\lambda_0\}$, and for each $\kappa \in B(0,1)$ set
    \[
        \beta_{\kappa} \coloneqq \sup_{\modulus{\lambda-\lambda_0}=r} \norm{B(\kappa) \Res(\lambda,A)}.
    \]

    If $\beta_{\kappa}\to 0$ as $\modulus{\kappa}\to 0$, then there exists $\delta>0$ such that for every $\kappa\in B(0,\delta)$, there is a unique spectral value $\lambda(\kappa)$ of the perturbed operator $A(\kappa)$ in $B(\lambda_0,r)$ that is a pole of the resolvent $\Res\big(\argument,A(\kappa)\big)$ and an algebraically simple eigenvalue. The corresponding spectral projection $P(\kappa)$ satisfies $\rg P(\kappa)\subseteq V$ and the maps
    \begin{align}
        B(0,\delta)\ni\kappa &\mapsto \lambda(\kappa)\in\bbC \\
        B(0,\delta)\ni\kappa &\mapsto P(\kappa)\in \calL(V) \label{eq:analytic-perturb-eigenvector}
    \end{align}
    are analytic.
\end{theorem}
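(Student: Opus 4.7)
The plan is to combine three ingredients: the standard spectral perturbation theory summarised in Appendix~\ref{appendix:spectrum}, the uniform ultracontractivity of the perturbed semigroups from Theorem~\ref{thm:perturb-smoothing} (as strengthened by Remark~\ref{rmk:perturb-with-parameter}), and the eigenvalue identity $T_\kappa(1) P(\kappa) = e^{\lambda(\kappa)} P(\kappa)$ that links the spectral projection to the semigroup smoothing.

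First I would apply the spectral perturbation theory. The hypothesis $\beta_\kappa \to 0$ ensures that for $\modulus{\kappa}$ sufficiently small, the Neumann series
\[
    \Res(\lambda, A(\kappa)) = \Res(\lambda, A) \sum_{n \ge 0} \bigl(B(\kappa) \Res(\lambda, A)\bigr)^n
\]
converges uniformly for $\lambda$ on $\gamma := \partial B(\lambda_0, r)$, making $\kappa \mapsto \Res(\lambda, A(\kappa)) \in \calL(X)$ analytic; integration over $\gamma$ then yields the Riesz projection $P(\kappa) \in \calL(X)$, analytic in $\kappa$. Since $P(\kappa) \to P(0)$ in $\calL(X)$, the ranges are isomorphic for small $\modulus{\kappa}$, forcing $\dim \rg P(\kappa) = 1$; thus there is a unique algebraically simple eigenvalue $\lambda(\kappa) \in B(\lambda_0, r)$ that depends analytically on $\kappa$ (e.g., extracted from $A(\kappa) P(\kappa) = \lambda(\kappa) P(\kappa)$ via evaluation against a suitable functional). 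Because $\lambda(\kappa)$ is algebraically simple, $\rg P(\kappa)$ equals the one-dimensional $\lambda(\kappa)$-eigenspace of $A(\kappa)$, so any $v \in \rg P(\kappa)$ satisfies $T_\kappa(t) v = e^{\lambda(\kappa) t} v$; Theorem~\ref{thm:perturb-smoothing} with Remark~\ref{rmk:perturb-with-parameter} then gives $T_\kappa(t) X \subseteq V$ for $t > 0$, so $v = e^{-\lambda(\kappa) t} T_\kappa(t) v \in V$, whence $\rg P(\kappa) \subseteq V$.

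For the $\calL(V)$-analyticity of $P(\kappa)$, I would exploit the rank-one decomposition $P(\kappa) x = \langle v'(\kappa), x \rangle v(\kappa)$, with $v(\kappa) := P(\kappa) v_0 \in V$ for a fixed $v_0$ such that $P v_0 \ne 0$, and $v'(\kappa) \in X'$ the corresponding eigenfunctional of $A(\kappa)'$ (both analytic, the latter via the adjoint projection). The restriction $v'(\kappa)|_V \in V'$ is analytic because the map $X' \to V'$ induced by the embedding $V \hookrightarrow X$ is bounded; hence $\calL(V)$-analyticity of $P(\kappa)|_V$ reduces to $V$-analyticity of $\kappa \mapsto v(\kappa)$. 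I would obtain the latter from the eigenrelation $v(\kappa) = e^{-\lambda(\kappa)} T_\kappa(1) v(\kappa)$ combined with $\calL(X, V)$-analyticity of $T_\kappa(1)$: the latter via the Dyson-Phillips series, whose summands are analytic in $\kappa$ (polynomial when $B(\kappa) = \kappa B$) and map $X$ into $V$ by the ultracontractivity-admissibility-interpolation machinery driving Lemmas~\ref{lem:V-bounded} and~\ref{lem:interpolation-argument}. The uniform bound $\|T_\kappa(1)\|_{X \to V} \le \widetilde{C}$ from Remark~\ref{rmk:perturb-with-parameter} delivers the required $V$-norm summability, and then $\kappa \mapsto T_\kappa(1) v(\kappa) \in V$ is analytic as a composition of analytic maps.

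The main technical hurdle is the final step: upgrading the well-established $\calL(X)$-analyticity of the Dyson-Phillips expansion to $\calL(X, V)$-analyticity of $T_\kappa(1)$. This entails a quantitative refinement of the estimates in Section~\ref{sec:ultracontractive}, tracking the analytic $\kappa$-dependence of each Dyson-Phillips summand term-by-term in the $V$-norm and verifying uniform convergence on a small disk around $\kappa = 0$; the factorial decay afforded by the Mittag-Leffler-type bounds of Lemma~\ref{lem:V-bounded} suggests that the radius of convergence is in fact unrestricted, but the careful bookkeeping must be carried out to make the argument rigorous.
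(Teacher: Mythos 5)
The first half of your argument (Neumann series on the circle, Riesz projection, dimension stability, uniqueness and simplicity of $\lambda(\kappa)$, and the identity $v=e^{-\lambda(\kappa)t}T_\kappa(t)v$ to get $\rg P(\kappa)\subseteq V$) is essentially the paper's proof. The gap is in the upgrade from $\calL(X)$- to $\calL(V)$-analyticity. Your route hinges on the claim that $\kappa\mapsto T_\kappa(1)\in\calL(X,V)$ is analytic, which you defer to ``careful bookkeeping'' of the Dyson--Phillips series in the $V$-norm. This is not a routine refinement: the individual Dyson--Phillips summands are not where the $X\to V$ smoothing of the perturbed semigroup comes from. A naive term-by-term estimate of $S_{k+1}(t)x=\int_0^t S_k(t-s)BT(s)x\dd s$ in $\calL(X,V)$ produces integrands of size $s^{-(\alpha+\beta)}$ near $s=0$, which need not be integrable since no bound on $\alpha$ is assumed; the Mittag--Leffler decay of Lemma~\ref{lem:V-bounded} controls only the $V\to V$ norms. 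This is precisely why Theorem~\ref{thm:perturb-smoothing} needs the interpolation--extrapolation bootstrap of Lemma~\ref{lem:interpolation-argument} rather than a direct series estimate, and it is not clear how to make that bootstrap depend analytically on $\kappa$. As written, the central analyticity claim is therefore unproved, and the proposed method for proving it would fail.

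The idea you are missing is that one does not need any quantitative analyticity of $T_\kappa(1)$ in $\calL(X,V)$ at all. The paper applies the Arendt--Nikolskii theorem \cite[Theorem 3.1]{ArendtNikolskii00}: a locally bounded, $V$-valued function that is scalarly analytic against a \emph{separating} subset of $V'$ is analytic. Local boundedness of $\kappa\mapsto P(\kappa)\in\calL(X,V)$ follows immediately from $P(\kappa)=e^{-\lambda(\kappa)}T_\kappa(1)P(\kappa)$ together with the $\kappa$-uniform ultracontractivity constant $\widetilde{C}$ of Remark~\ref{rmk:perturb-with-parameter}, and scalar analyticity of $\kappa\mapsto\braket{\varphi,P(\kappa)v}$ for $\varphi\in X'$ is already contained in the $\calL(X)$-analyticity; since $V$ is dense in $X$, Lemma~\ref{lem:weakstar-dense} shows $X'$ is separating in $V'$. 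This bypasses both the rank-one decomposition and the Dyson--Phillips bookkeeping entirely. If you wish to keep your fixed-point strategy, you would still need to supply a genuine proof of $\calL(X,V)$-analyticity of $T_\kappa(1)$ (for which the same Arendt--Nikolskii argument would, in fact, be the natural tool).
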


Note that the mapping $\kappa \mapsto \beta_{\kappa}$ need not be continuous at $\kappa =0$. Therefore, the fact that $B(0)=0\restricted{X_1}$ does not make the condition $\beta_{\kappa}\to 0$ as $\modulus{\kappa}\to 0$ redundant.

Due to the analyticity assumption on the family $\{A(\kappa)\}$ in Setting~\ref{setting:analytic}, the assertions of Theorem~\ref{thm:analyticity-spectrum} -- except with $\calL(X)$ instead of $\calL(V)$ in~\eqref{eq:analytic-perturb-eigenvector} --  follow from a result of Kato \cite[Theorem~VII-1.7]{Kato}. Therefore, the main novelty of Theorem~\ref{thm:analyticity-spectrum} is extending the analyticity of the spectral projection map with co-domain $\calL(V)$.

The proof of Theorem~\ref{thm:analyticity-spectrum} requires the following basic topological fact. Recall that for a Banach space $Y$, a subset $W\subseteq Y'$ is called \emph{separating} if for every $0\ne y\in Y$ there exists $w\in W$ such that $\braket{w,y}_{Y',Y} \ne 0$. Equivalent conditions for a subspace of a dual to be separating are available in \cite[Theorem~4.7(b)]{Rudin-FA}.

\begin{lemma}
\label{lem:weakstar-dense}
    Let $X, Y$ be Banach spaces such that $Y$ embeds continuously and densely into $X$. Then $X'$ is a separating subspace of $Y'$.
\end{lemma}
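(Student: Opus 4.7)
The plan is to view $X'$ as a subspace of $Y'$ via the dual of the embedding map, and then use the Hahn–Banach theorem to produce separating functionals. Let $\iota : Y \hookrightarrow X$ denote the continuous dense embedding, and let $\iota' : X' \to Y'$ be its Banach-space adjoint, given by $\iota'(x') = x' \circ \iota$. Density of $\iota(Y)$ in $X$ implies that $\iota'$ is injective, so $\iota'(X')$ is a well-defined linear subspace of $Y'$, which is the canonical interpretation of the statement ``$X'$ is a subspace of $Y'$''.

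The key step is then the following. Fix any $0 \ne y \in Y$. Because $\iota$ is continuous and injective (continuous embeddings are taken to be injective throughout the paper), we have $\iota(y) \ne 0$ in $X$. By the Hahn–Banach theorem, there exists $x' \in X'$ with $\langle x', \iota(y)\rangle_{X',X} \ne 0$. Setting $y' \coloneqq \iota'(x') \in Y'$, the defining identity of the adjoint yields
\[
    \langle y', y\rangle_{Y',Y} = \langle x', \iota(y)\rangle_{X',X} \ne 0,
\]
which shows that $\iota'(X')$ is separating in $Y'$.

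There is no real obstacle; the only subtlety worth mentioning is that the density hypothesis is used exclusively to guarantee that the adjoint $\iota'$ is injective, so that identifying $X'$ with its image in $Y'$ is unambiguous. Without this, the conclusion would have to be phrased only for the restriction map. The proof will therefore be short and amounts to spelling out these two points (dense range gives injective adjoint; Hahn–Banach gives separation in $X$ which transfers to $Y$ via $\iota$).
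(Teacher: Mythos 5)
Your proof is correct and takes essentially the same approach as the paper: both identify $X'$ with its image under the injective adjoint $\iota'$ (injectivity coming from dense range) and then invoke Hahn--Banach to separate a nonzero $\iota(y)$ from $0$ in $X$. The only cosmetic difference is that you argue directly while the paper phrases the same step as a contradiction via the pre-annihilator ${}^\perp X' = \{0\}$.
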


\begin{proof}
    Let $\iota:Y\hookrightarrow X$ denote the natural injection. Since $\iota$ has dense range by assumption, the dual operator $\iota'$ is injective, and thus it makes sense to consider $X'$ as a subspace of $Y'$. Strictly speaking, we need to prove that $\iota'(X')$ is separating in $Y'$. If this is not the case, then there exists $0\ne y\in Y$ such that 
    \[ 0=\braket{\iota'(x'),y}_{Y',Y} = \braket{x',\iota(y)}_{X',X}  \]
    for all $x'\in X'$. Hence $\iota(y)$ is an element of the \emph{pre-annihilator}
    \[
        {}^\perp X' \coloneqq \{x\in X ~\colon \braket{x',x}=0\text{ for all }x'\in X'\}=\{0\}.
    \]
    Thus $\iota(y) = 0$ and in turn, $y=0$ since $\iota$ is injective. This is a contradiction.
\end{proof}

\begin{proof}[Proof of Theorem~\ref{thm:analyticity-spectrum}]
    The assumption of holomorphy in Setting~\ref{setting:analytic} ensures that the family $\{A(\kappa)\}_{\kappa}$ is also holomorphic in the sense of \cite[Section~VII-1.2]{Kato}; cf. \cite[p.\ 375]{Kato}. On the other hand, the admissibility assumption~\ref{assume:it:admissibleX} in Setting~\ref{setting:admissible} guarantees $B$ is $A$-bounded (Remark~\ref{rem:setting}(b)).
    These observations allow us to combine a spectral result for analytic perturbations due to Kato~\cite[Theorem~VII-1.7]{Kato} with Proposition~\ref{prop:spectral-perturb} to deduce that there exist $\delta>0$ and analytic maps $B(0,\delta)\ni\kappa \mapsto \lambda(\kappa)\in\bbC$ and $B(0,\delta)\ni\kappa \mapsto P(\kappa)\in \calL(X)$ satisfying the desired conclusions of the theorem, except with $\calL(X)$ instead of $\calL(V)$ in~\eqref{eq:analytic-perturb-eigenvector}. Thus the remainder of the proof consists of extending the analyticity of the map $P(\argument)$ with co-domain $\calL(V)$. For this purpose, we verify the conditions of the Arendt-Nikolskii theorem~\cite[Theorem 3.1]{ArendtNikolskii00}, i.e., we show that the function in~\eqref{eq:analytic-perturb-eigenvector} is locally bounded and $\braket{\varphi, P(\argument)v}_{V',V}$ is analytic for each $v\in V$ and for all $\varphi$ in a separating subset of $V'$.
    
    \emph{Step 1}: In this step, we prove for every compact subset $U\subseteq B(0,\delta)$ that
    \begin{equation}
    \label{eq:locally-bounded}
        \sup_{\kappa\in U} \|P(\kappa)\|_{\calL(V)} < \infty.
    \end{equation}
    Let $v\in X$ be arbitrary. Then $P(\kappa)v$ is either $0$ or is an eigenvector for the eigenvalue $\lambda(\kappa)$. It is moreover an element of $V$, since Theorem~\ref{thm:perturb-smoothing} implies that the perturbed semigroup $(T_\kappa(t))_{t\ge 0}$ generated by $A(\kappa)$ maps $X$ into $V$ and hence
    \begin{equation*}
        P(\kappa)v = e^{-\lambda(\kappa)}T_\kappa(1)P(\kappa)v \in V.
    \end{equation*}
    This shows that $P(\kappa)X\subseteq V$. Now the estimate~\eqref{eq:thm:perturb-smoothing} for the operator $T_\kappa(1)$ yields
    \begin{equation}
    \label{eq:v-kappa-estimate}
        \|P(\kappa)v\|_V \le  \big\vert e^{-\lambda(\kappa)}\big\vert \|T_\kappa(1)\|_{X\to V}\|P(\kappa)v\|_X \le \widetilde{C} e^{-\re\lambda(\kappa)}\|P(\kappa)\|_{\calL(X)} \|v\|_X
    \end{equation}
    for every $\kappa\in B(0,\delta)$, where the constant $\widetilde{C}>0$ can be chosen to be independent of $\kappa$ due to Remark~\ref{rmk:perturb-with-parameter}. Inequality~\eqref{eq:v-kappa-estimate} clearly shows that $P(\kappa)\in\calL(X,V)$ for every $\kappa\in B(0,\delta)$. Also, analyticity of the maps $\kappa\mapsto\lambda(\kappa)\in\bbC$ and $\kappa\mapsto P(\kappa)\in \calL(X)$ implies that they are continuous and hence uniformly bounded on any compact subset $U\subseteq B(0,\delta)$. Choose $\Lambda, M\in [0,\infty)$, depending only on $U$, such that
    \begin{equation*}
        \sup_{\kappa\in U}|\lambda(\kappa)| \le \Lambda, \quad \sup_{\kappa\in U}\|P(\kappa)\|_{\calL(X)} \le M.
    \end{equation*}
    With the help of~\eqref{eq:v-kappa-estimate}, we may now infer that 
    \[
        \sup_{\kappa\in U}\|P(\kappa)\|_{X\to V} \le \widetilde{C}M e^\Lambda < \infty.
    \]

    \emph{Step 2}: Let $v\in V$ be arbitrary. Since $P(\kappa)v\in V$ for every $\kappa\in B(0,\delta)$, it holds that $\braket{\varphi, P(\kappa)v}_{X',X} = \braket{\varphi, P(\kappa)v}_{V',V}$ for every $\varphi\in X'$. Since $\kappa\mapsto P(\kappa)v$ is analytic, and hence weakly analytic, with values in $X$, the scalar-valued map $\kappa\mapsto \braket{\varphi, P(\kappa)v}_{V',V}$ is analytic on $B(0,\delta)$ for every $\varphi\in X'$. Observe that $V$ is densely embedded in $X$ thanks to condition~\ref{assume:it:ultracontractive} in Setting~\ref{setting:admissible} in combination with the strong continuity of $(T(t))_{t\ge 0}$ on $X$. It follows from Lemma~\ref{lem:weakstar-dense} that $X'$ is a separating subset of $V'$. The Arendt-Nikolskii theorem~\cite[Theorem 3.1]{ArendtNikolskii00} now shows that $\kappa\mapsto P(\kappa)v\in V$ is analytic for every $v\in V$.

    Lastly, the pointwise analyticity is equivalent to the analyticity of the map $\kappa\to P(\kappa)\in\calL(V)$ by~\cite[Proposition A.3]{ABHN}.
\end{proof}

\begin{remark}
    The general proof strategy for obtaining analyticity of the spectral projections $P(\argument)$ is loosely adapted from a specific problem considered in~\cite[Proposition 4.11]{DanersGlueckMui2023}, where $X=L^2(\Omega)$ and $V=\Cont(\overline{\Omega})$ for a bounded domain $\Omega\subset\bbR^n$ with smooth boundary. There, the authors worked directly with eigenvectors (instead of the more abstract object of spectral projections) and the local boundedness required to apply the Arendt-Nikolskii theorem was obtained without using semigroups, instead relying on elliptic regularity theory and a Sobolev embedding theorem.
\end{remark}

When the Banach space from Setting~\ref{setting:analytic} is a function space such as $L^p$ or $\Cont(K)$, or more generally, a Banach lattice, then Theorem~\ref{thm:analyticity-spectrum} can be used to obtain lower bounds on positive eigenfunctions of perturbed operators. 

\begin{theorem}
\label{thm:eigenfunction-lower-bound}
    In Setting~\ref{setting:analytic}, assume that $X=E$ is a complex Banach lattice, the operator $A$ and $B(\kappa)$ are real for each $\kappa \in \bbR$ with $\modulus{\kappa}<1$, and let $V=E_u$ for some $u\in E_+$.

    Suppose that $\lambda_0\in\spec(A) \cap \bbR$ is a pole of the resolvent $\Res(\argument,A)$ and an algebraically simple eigenvalue with an eigenvector $v_0$ satisfying $v_0 \ge cu$ for some constant $c>0$.
    Fix $r>0$ such that $\overline{B(\lambda_0,r)}\cap\spec(A) = \{\lambda_0\}$ and
    set
    \[
        \beta_{\kappa} \coloneqq \sup_{\modulus{\lambda-\lambda_0}=r} \norm{B(\kappa) \Res(\lambda,A)}
    \]
    for each $\kappa \in \bbR$ with $\modulus{\kappa}<1$.
    
    If $\beta_{\kappa}\to 0$ as $\kappa\to 0$,
    then
    there exists $\delta>0$ such that for all $\kappa \in (-\delta,\delta)$, the following assertions hold:
    \begin{enumerate}[\upshape (a)]
        \item The perturbed operator $A(\kappa)$ has a unique algebraically simple eigenvalue $\lambda(\kappa)$ in the open interval $(\lambda_0-r,\lambda_0+r)$ with an associated eigenvector $v(\kappa)\in E_u$.
        Moreover, $\overline{B(\lambda_0,r)} \cap \spec( A(\kappa)) = \{\lambda(\kappa)\}$.
        
        \item One may choose an eigenvector $v(\kappa)$ corresponding to $\lambda(\kappa)$ such that
        \begin{equation*}
            v(\kappa) \ge \frac{c}{2}u.
        \end{equation*}
    \end{enumerate}
\end{theorem}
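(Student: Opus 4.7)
The plan is to deduce this theorem from Theorem~\ref{thm:analyticity-spectrum} by exploiting reality together with the fact, crucially provided by that theorem, that the spectral projection map is analytic with values in $\calL(E_u)$ and not merely $\calL(E)$. First, I would apply Theorem~\ref{thm:analyticity-spectrum} to the holomorphic family $\{A(\kappa)\}_{\kappa \in B(0,1)}$ to obtain a complex disk $B(0,\delta_1)\subset\bbC$ together with analytic maps $\kappa\mapsto\lambda(\kappa)\in\bbC$ and $\kappa\mapsto P(\kappa)\in\calL(E_u)$, where $\lambda(\kappa)$ is the unique spectral value of $A(\kappa)$ in $B(\lambda_0,r)$ and $P(\kappa)$ the associated spectral projection. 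A small bookkeeping check is needed here to pass from the real-variable hypothesis on $\beta_\kappa$ in the present statement to the complex-circle version required by Theorem~\ref{thm:analyticity-spectrum}; this should follow from $A$-boundedness and the analytic structure of $\lambda\mapsto B(\kappa)\Res(\lambda,A)$.

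Second, I would use reality for real $\kappa$. Since $\lambda_0\in\bbR$, the disk $B(\lambda_0,r)$ is invariant under conjugation, and reality of $A(\kappa)$ makes its spectrum conjugation-symmetric; uniqueness of $\lambda(\kappa)$ in the disk then forces $\lambda(\kappa)\in\bbR$. A direct parametrisation of the contour in the Dunford integral defining $P(\kappa)$ shows that $P(\kappa)$ commutes with conjugation, so $P(\kappa)$ is real. In particular, $v(\kappa):=P(\kappa)v_0$ lies in the real part of $E_u$; note that $v_0$ itself already belongs to $E_u$, since the ultracontractivity assumption~\ref{assume:it:ultracontractive} gives $v_0=e^{-\lambda_0}T(1)v_0\in V=E_u$.

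The heart of the argument is then a gauge-norm estimate. Since $\lambda_0$ is algebraically simple, $\rg P(0)=\linSpan\{v_0\}$ and so $P(0)v_0=v_0$. The analyticity—hence continuity at $0$—of $\kappa\mapsto P(\kappa)\in\calL(E_u)$ then yields
\[
    \varepsilon(\kappa):=\|v(\kappa)-v_0\|_{E_u}\to 0 \quad \text{as } \kappa\to 0.
\]
By the very definition of the gauge norm, $|v(\kappa)-v_0|\le\varepsilon(\kappa)u$; using reality of $v(\kappa)$, this rearranges to $v(\kappa)\ge v_0-\varepsilon(\kappa)u\ge(c-\varepsilon(\kappa))u$. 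Shrinking $\delta\in(0,\delta_1)$ so that $\varepsilon(\kappa)\le c/2$ for all real $\kappa$ with $|\kappa|<\delta$ yields the bound in part (b). This in turn forces $v(\kappa)\ne 0$ (as $u\ne 0$), so by algebraic simplicity of $\lambda(\kappa)$ the element $v(\kappa)$ spans the one-dimensional eigenspace, establishing part (a).

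The only essential obstacle in this plan is that the convergence $P(\kappa)v_0\to v_0$ must take place in the $E_u$-norm rather than merely the $E$-norm; convergence in $E$ would not control $v(\kappa)-v_0$ pointwise in the order and so would not yield the lower bound. This finer continuity is precisely the content of Theorem~\ref{thm:analyticity-spectrum}, and once that theorem is invoked, the remaining steps reduce to the definition of the gauge norm and an elementary reality argument.
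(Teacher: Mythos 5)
Your proposal is correct and follows essentially the same route as the paper: invoke Theorem~\ref{thm:analyticity-spectrum} to get the analytic map $\kappa\mapsto P(\kappa)\in\calL(E_u)$, use reality of $A$ and $B(\kappa)$ to conclude $\lambda(\kappa)\in\bbR$, and then convert the $E_u$-continuity of $\kappa\mapsto P(\kappa)v_0$ at $\kappa=0$ into the order bound via the definition of the gauge norm. If anything, you are slightly more careful than the paper on two points it glosses over — the passage from the real-point supremum defining $\beta_\kappa$ to the full-circle supremum needed in Theorem~\ref{thm:analyticity-spectrum}, and the reality of $P(\kappa)$ (hence of $v(\kappa)$), which is needed for the inequality $v(\kappa)\ge (c/2)u$ to make sense.
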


Example~\ref{exas:closed-unit-ball}\ref{exas:closed-unit-ball:itm:principal-ideal} below shows that the closed unit ball of the principal ideal $E_u$ is indeed closed in the Banach lattice $E$, and so $E_u$ satisfies the condition~\ref{assume:bochner-V:itm:closed-ball} of Setting~\ref{setting:admissible}. This enables us to choose $V=E_u$ in Setting~\ref{setting:analytic}.

\begin{proof}[Proof of Theorem~\ref{thm:eigenfunction-lower-bound}]
    Firstly, by Theorem~\ref{thm:analyticity-spectrum}, there exists $\varepsilon_0>0$ such that the first assertion is true for all $\kappa\in (-\varepsilon_0,\varepsilon_0)$. Indeed, $\lambda(\kappa)$ is real, since $\lambda_0\in \bbR$ and the operators $A$ and $B(\kappa)$ are real for all $\kappa\in (-\varepsilon_0,\varepsilon_0)$; see Proposition~\ref{prop:spectral-perturb}.

    Moreover, Theorem~\ref{thm:analyticity-spectrum} also implies that the map
    \begin{equation*}
        (-\varepsilon_0,\varepsilon_0) \ni \kappa \mapsto v(\kappa)\coloneqq P(\kappa)v_0 \in E_{u}
    \end{equation*}
    is analytic, and in particular continuous. Note that $P(0)v_0 = v_0$. Hence, we may choose $\delta < \varepsilon_0$ sufficiently small so that
    \begin{equation*}
        \|v(\kappa)-v_0\|_{E_u} < \frac{c}{2}
    \end{equation*}
    for all $\kappa\in (-\delta,\delta)$. Recalling the definition of the gauge norm on $E_u$, this implies that $|v(\kappa)-v_0| < \frac{c}{2}u$, and therefore
    $
        v(\kappa) \ge \frac{c}{2}u
    $
    for all $\kappa\in (-\delta,\delta)$.
\end{proof}

We can now prove the result stated in Theorem~\ref{thm:analyticity-introduction}. In particular, this shows that the immediate differentiability of the semigroup and the conditions~\ref{assume:bochner-V:itm:separable} or~\ref{assume:bochner-V:itm:closed-ball} in Setting~\ref{setting:analytic} can be dropped from Theorem~\ref{thm:eigenfunction-lower-bound} whenever $B$ is a $E_u$-invariant bounded operator.

\begin{proof}[Proof of Theorem~\ref{thm:analyticity-introduction}]
    We may repeat the proof of Theorem~\ref{thm:analyticity-spectrum}, but employing Theorem~\ref{thm:ultracontractivity-introduction} instead of Theorem~\ref{thm:perturb-smoothing}. Then the assertions of Theorem~\ref{thm:analyticity-spectrum} hold in the present situation as well. The result can therefore be argued exactly as in the proof of Theorem~\ref{thm:eigenfunction-lower-bound}.
\end{proof}

\section{Applications and examples: elliptic operators}
    \label{sec:applications}
Semigroups generated by elliptic operators constitute a natural class of ultracontractive semigroups that satisfy the conditions outlined in Setting~\ref{setting:admissible}. Although Davies previously examined the robustness of ultracontractivity for second-order elliptic operators \cite{Davies1986-perturbation}, our results accommodate some non-standard perturbations of second-order operators as well as higher-order elliptic operators.

\subsection{Second-order elliptic operators}

Let $\Omega\subseteq\bbR^n$ be a bounded domain with Lipschitz boundary, and consider a standard uniformly elliptic operator in divergence form, given formally by
\begin{equation}
\label{eq:unif-elliptic}
    Lu \coloneqq -\divergence(A\nabla u) = -\sum_{i,j=1}^n \frac{\partial}{\partial x_i}\left( a_{ij}(x)\frac{\partial }{\partial x_j}u \right),
\end{equation}
where the coefficient matrix $A = (a_{ij}) \in L^\infty(\Omega;\bbR^{n\times n})$ satisfies
\begin{equation*}
    \re \sum_{i,j=1}^n a_{ij}(x)\xi_i \overline{\xi_j} \ge \alpha|\xi|^2 \qquad\text{for all }x\in\Omega,\xi\in\bbC^n
\end{equation*}
for some constant $\alpha>0$. The standard lower-order terms are omitted for the sake of simplicity, as they do not cause additional difficulties in our framework. 
We equip $L$ with Neumann-Robin boundary conditions given (formally) by
\begin{equation*}
    \nu\cdot A\nabla u + \beta u = 0 \quad\text{on }\partial\Omega,
\end{equation*}
where $\beta\in L^\infty(\partial\Omega;\bbR)$ and $\nu$ denotes the outer unit normal on $\partial\Omega$. 

The operator can be rigorously defined by the sesquilinear form
\begin{equation}
\label{eq:form-robin}
    \left\{ ~ \begin{aligned}
    \dom(\fra) &\coloneqq H^1(\Omega) \subset L^2(\Omega) \\
    \fra[u,v] &\coloneqq \int_\Omega A\nabla u\cdot\overline{\nabla v} \dd x + \int_{\partial\Omega} \beta u \overline{v} \dd\sigma \quad \text{for all }u,v\in H^1(\Omega);
    \end{aligned}\right.
\end{equation}
where $\sigma$ denotes the surface measure on $\partial\Omega$.
Clearly, $\fra$ is a densely defined closed form on $L^2(\Omega)$, and it is straightforward to see, with the help of the trace inequality (valid on Lipschitz domains, see e.g.~\cite[Theorem 18.1]{Leoni}), that there exist constants $C,\gamma>0$ such that
\begin{align}
    \big|\fra[u,v]\big| \le C\|u\|_{H^1(\Omega)} \|v\|_{H^1(\Omega)}& \quad\text{and} \\
    \re\fra[u,u] + \gamma\|u\|^2_{L^2(\Omega)} \ge \alpha\|u\|^2_{H^1(\Omega)}&
\end{align}
for all $u,v\in H^1(\Omega)$. These inequalities show that the form $\fra$ is continuous and $\fra + \gamma$ is coercive on $H^1(\Omega)$. We refer to the monograph of Ouhabaz~\cite[Chapter 1]{Ouhabaz} for general terminology in the theory of forms.
The operator on $L^2(\Omega)$ associated to the form $\fra$ is now defined by
\begin{equation}
\label{eq:L-via-form}
    \left\{\, \begin{aligned}
        \dom(L) &\coloneqq \{u\in H^1(\Omega) ~\colon\exists v\in L^2(\Omega), \fra[u,w] = \braket{v,w}\,\forall w\in H^1(\Omega)\} \\
        Lu &= v.
    \end{aligned}\right.
\end{equation}
For sufficiently smooth coefficients $A$ and functions $u$, one sees via integration by parts that $Lu$ is indeed given by the formal differential operator introduced in~\eqref{eq:unif-elliptic}, and thus there is no harm in reusing the notation $L$ for the operator defined in~\eqref{eq:L-via-form}.

In the above setting, the well-known Lumer-Phillips theorem shows that $-L$ is the generator of a $C_0$-semigroup $(T(t))_{t\ge 0}$ on $L^2(\Omega)$, see e.g.~\cite[Proposition~1.22 and Theorem~1.49]{Ouhabaz}. We collect further important properties of the semigroup below. Note that we use the term \emph{positive} for a linear operator to mean \emph{positivity-preserving}.
\begin{proposition}
\label{prop:heat-semigroups}
    On $L^2(\Omega)$, the semigroup $(T(t))_{t\ge 0}$ generated by $-L$ is positive and analytic. Moreover, the following assertions hold.
    \begin{enumerate}[\upshape (a)]
        \item It holds that $T(t)L^2(\Omega) \subset \Cont(\overline{\Omega})$ for all $t>0$, and there exists a constant $C>0$ such that
        \begin{equation*}
            \|T(t)\|_{2\to\infty} \le Ct^{-n/4}
        \end{equation*}
        for all $t\in (0,1]$.
        In addition, $(T(t))_{t\ge 0}$ restricts to a holomorphic $C_0$-semigroup on $\Cont(\overline{\Omega})$.
        
        \item The semigroup extrapolates to a consistent family of semigroups $(T_p(t))_{t\ge 0}$ on $L^p(\Omega)$ for all $p\in [1,\infty]$ with $T_2 = T$. Moreover, the strong continuity holds for all $p\ne\infty$ and the analyticity holds for $1<p<\infty$.
    \end{enumerate}
\end{proposition}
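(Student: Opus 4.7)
The statements of Proposition~\ref{prop:heat-semigroups} are classical for elliptic operators defined via sesquilinear forms, so the plan is to assemble standard form-theoretic results together with Gaussian heat kernel estimates rather than rederive everything from scratch.

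\textbf{Generation, analyticity, and positivity on $L^2$.} The continuity and shifted coercivity estimates stated immediately above the proposition show that $\fra + \gamma$ is a densely defined, closed, continuous sectorial form on $L^2(\Omega)$; hence by standard form theory (see \cite[Chapter~1]{Ouhabaz}), $-L$ generates a bounded analytic $C_0$-semigroup on $L^2(\Omega)$. Positivity would then follow from the first Beurling--Deny criterion: since both the coefficient matrix $A$ and the Robin potential $\beta$ are real-valued, a direct computation using $\nabla u^+ \cdot \nabla u^- = 0$ in $\Omega$ and the disjointness of the supports of $u^+, u^-$ on $\partial\Omega$ gives $\fra[u^+, u^-]=0$ for real $u\in H^1(\Omega)$, which by \cite[Chapter~2]{Ouhabaz} is equivalent to positivity of $(T(t))_{t\ge 0}$.

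\textbf{Ultracontractivity and extrapolation to $L^p$.} I would invoke the Sobolev embedding $H^1(\Omega) \hookrightarrow L^{2n/(n-2)}(\Omega)$ valid on bounded Lipschitz domains (with the usual adaptations for $n\le 2$), combined with Nash--Varopoulos extrapolation, to obtain $\|T(t)\|_{L^2\to L^\infty}\le Ct^{-n/4}$ for $t\in (0,1]$; see \cite[Chapter~6]{Ouhabaz} or \cite[Chapter~2]{Davies1990}. Duality then yields the complementary bound $\|T(t)\|_{L^1\to L^2}\le Ct^{-n/4}$ (the adjoint form has coefficient matrix $A^{*}$ and the same Robin potential), and positivity combined with Riesz--Thorin interpolation extrapolates $T$ to consistent bounded semigroups $(T_p(t))_{t\ge 0}$ on $L^p(\Omega)$ for all $p\in [1,\infty]$. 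Strong continuity on $L^p$ for $p<\infty$ is then standard by density and uniform boundedness on compact time intervals, and analyticity on $L^p$ for $1<p<\infty$ follows from Stein interpolation between the analytic semigroup on $L^2$ and the uniformly bounded semigroup on $L^\infty$; cf.\ \cite[Chapter~7]{Ouhabaz}.

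\textbf{Mapping into $\Cont(\overline{\Omega})$ and analyticity there.} The remaining assertions rely on Gaussian upper bounds for the heat kernel $K(t,\argument,\argument)$ of $-L$. Under uniform ellipticity, bounded coefficients, bounded Lipschitz domain, and Robin-type boundary condition, these are classical (see \cite[Chapter~6]{Ouhabaz}) and yield continuity of the kernel on $\overline{\Omega}\times\overline{\Omega}$ for each $t>0$. Continuity of the kernel immediately gives $T(t)L^2(\Omega)\subseteq \Cont(\overline{\Omega})$, and the Gaussian bounds translate into sectorial resolvent estimates in $\calL(\Cont(\overline{\Omega}))$, yielding both strong continuity at $t=0$ (on the dense subspace $\dom(L)\cap \Cont(\overline{\Omega})$) and sectorial analyticity of the restricted semigroup.

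\textbf{Main obstacle.} I expect the principal technical hurdle to be the last step, namely establishing analyticity on $\Cont(\overline{\Omega})$ rather than merely on $L^p$. The sup-norm is not naturally an interpolation norm between $L^2$ and $L^\infty$, so analyticity on $\Cont(\overline{\Omega})$ cannot be read off directly from the $L^p$-analyticity; it genuinely depends on pointwise kernel bounds up to the boundary, which in turn require the Lipschitz regularity of $\partial\Omega$ and the uniform ellipticity of $L$.
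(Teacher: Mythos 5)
Your plan for the $L^2$ theory, positivity, the $L^2\to L^\infty$ smoothing estimate, and the extrapolation to $L^p$ with Stein interpolation is sound and tracks the paper's proof closely (the paper cites Ouhabaz for generation and Beurling--Deny, classical ultracontractivity, Gaussian estimates from Arendt--ter~Elst for the $L^p$ extrapolation, and Davies for the Stein argument). The one substantive gap is in the step you yourself flagged as the main obstacle, and your proposed resolution of it does not work as stated.

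You claim that the Gaussian upper bounds for the heat kernel ``yield continuity of the kernel on $\overline{\Omega}\times\overline{\Omega}$ for each $t>0$''. This is not a consequence of Gaussian bounds. Gaussian bounds give $K(t,\argument,\argument)\in L^\infty(\Omega\times\Omega)$ and hence $T(t):L^1\to L^\infty$, but they say nothing about continuity of the kernel, let alone continuity up to the boundary of a Lipschitz domain. Passing from $T(t):L^2\to L^\infty(\Omega)$ to $T(t):L^2\to \Cont(\overline{\Omega})$ requires interior and up-to-the-boundary H\"older regularity of weak solutions of the inhomogeneous Robin problem (De~Giorgi--Nash--Moser estimates adapted to Robin boundary conditions on Lipschitz domains). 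This is precisely the nontrivial content of Nittka's work, which the paper cites for exactly this step: \cite[Theorems~4.1 and~4.3]{Nittka} establish both that $T(t)L^2(\Omega)\subseteq\Cont(\overline{\Omega})$ and that the restricted semigroup is holomorphic on $\Cont(\overline{\Omega})$. Without invoking such a regularity result, your argument only produces the $L^\infty$ statement, and your subsequent claim that the Gaussian bounds also yield sectorial resolvent estimates in $\calL(\Cont(\overline{\Omega}))$ has the same circularity: one first needs to know that the resolvent maps into $\Cont(\overline{\Omega})$, which again is a boundary-regularity assertion. One more small point: the paper's form~\eqref{eq:form-robin} allows $\beta$ to change sign, which is outside the standard hypotheses of some of the Gaussian-estimate literature; the paper handles this by citing a transformation result of Daners that reduces to the case $\beta\ge 0$, a step your outline does not address.
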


\begin{proof}
    Positivity follows from the well-known Beurling-Deny criterion~\cite[Theorem 2.6]{Ouhabaz}. Since the form $\fra + \gamma$ is coercive, it is in particular \emph{sectorial}, and therefore the analyticity of the semigroup follows from~\cite[Theorem~1.52]{Ouhabaz}.
    Moreover,~(a) follows from the work of Nittka~\cite[Theorems~4.1 and~4.3]{Nittka}, and the ultracontractivity estimate is classical; see~\cite[Theorem 7.3.2]{Arendt2004-survey} for instance.

    (b) The Gaussian estimates obtained in~\cite{ArendtterElst1997} yield the strong continuity of the semigroup on $L^p(\Omega)$, $1\le p<\infty$, and also the extrapolation property; see, in particular, \cite[Theorem~4.9]{ArendtterElst1997}. Additionally, we note that the assumption of positivity of $\beta$ in that paper can be removed, since on a bounded Lipschitz domain, one can transform~\eqref{eq:form-robin} into an equivalent problem with a new Robin coefficient $\tilde{\beta}$ that is positive. Details can be found in~\cite[Theorem~2.2 and Lemma~3.2]{Daners2009}.

    Finally, the extrapolation of holomorphy for $1<p<\infty$ is a classical argument using the Stein interpolation theorem, see e.g.\ the proof of~\cite[Theorem 1.4.2]{Davies1990}. An overview of many key ideas in the study of extrapolation of semigroups can be found in \cite[Sections~7.2--7.4]{Arendt2004-survey}.
\end{proof}

Inspired by an example considered by Rellich in his monograph~\cite[p.\ 104]{Rellich1969} in the context of perturbation of symmetric forms, we now illustrate how Theorem~\ref{thm:perturb-smoothing} can be applied to some non-standard perturbations of second-order elliptic operators.

\begin{example}[Perturbation by unbounded rank-one operators]
    Consider the second-order elliptic operator $L=-\frac{\dd^2}{\dd x^2}$ with Neumann boundary conditions on the interval $(-\pi,\pi)$. In other words, $L$ is the operator arising from the sesquilinear form
    \[
        \fra[u,v] \coloneqq \int_{-\pi}^\pi u'\overline{v'}\dd x, \qquad u,v\in H^1(-\pi,\pi).
    \]
    Let $V \coloneqq \Cont([-\pi,\pi])$, and denote the Dirac delta functional at $0$ by $\delta_0$. We now define $B: V \to X$ by
    \begin{equation*}
        B \coloneqq \braket{-\delta_0,\argument}_{V',V}  \one_{(-\pi,\pi)}.
    \end{equation*}
    We claim that the operator $\calL \coloneqq -L + B$ generates a semigroup $(S(t))_{t\ge 0}$ on $X\coloneqq L^2(-\pi,\pi)$ that maps $X$ into $V$ and fulfils
    \[
        \|S(t)\|_{X\to V}\le \widetilde Ct^{-1/4}
    \]
    for some constant $\tilde C>0$ and all $t\in (0,1]$.
    Indeed, note firstly that
    \[
        \dom(-L) \subset H^1(-\pi,\pi) \hookrightarrow V
    \]
    (cf. \cite[Theorem 8.2]{Brezis}), and that the restriction of $B$ is continuous on $\dom(-L)$. Clearly $V$ is separable and embeds continuously into $X$. Finally, in light of Proposition~\ref{prop:heat-semigroups}, it is straightforward to check that the assumptions of Theorem~\ref{thm:perturb-smoothing} are satisfied, and hence $\calL$ satisfies the claim.
    
    Formally speaking, $\calL$ can be realised as the differential ``operator''
    \begin{equation*}
        \calL u = u'' - \delta_0 u,
    \end{equation*}
    which can be viewed as a Schr\"{o}dinger operator with the Dirac delta ``function'' $\delta_0$ as a potential. We point out in addition that the perturbation $B$ is not closable in $X$, i.e.\ the closure in $X\times X$ of the graph of $B$ does not define a single-valued operator.
\end{example}

A more substantial example will arise in Theorem~\ref{thm:frac-powers} below. We defer its presentation to the next subsection, since it turns out that we can treat second- and higher-order operators in the same general framework.

\subsection{Higher-order elliptic operators}

To avoid technical difficulties in this subsection, we restrict attention to a bounded domain $\Omega\subset\bbR^n$ with smooth boundary. As a prototype of a higher-order elliptic operator, we now consider the polyharmonic operator $(-\Delta)^m$  for an integer $m\ge 2$ with Dirichlet boundary conditions. This operator is formally given by
\begin{equation*}
    \left\{ \begin{aligned}
        &Lu \coloneqq (-\Delta)^m u  \quad &&\text{in }\Omega \\
        &D^\alpha u = 0  \quad &&\text{on }\partial\Omega,
    \end{aligned}\right.
\end{equation*}
with $|\alpha| \le m-1$ (we use the standard multi-index notation). Again, for simplicity, we omit lower-order terms which present no additional difficulty in our setting.

Similar to the second-order case, $L$ arises from a densely defined, closed sesquilinear form given by
\begin{equation}
\label{eq:polyharmonic-form}
    \left\{ \begin{aligned}
        \dom(\fra) &\coloneqq H^m_0(\Omega) \\
        \fra[u,v] &\coloneqq \begin{cases}
            \int_\Omega \Delta^k u \, \overline{\Delta^k v} \dd x \quad &\text{if } m=2k; \\[0.5em]
            \int_\Omega \nabla(\Delta^k u) \cdot \overline{\nabla(\Delta^k v)} \dd x \quad &\text{if }m=2k+1.
        \end{cases}
    \end{aligned}\right.
\end{equation}
This form is symmetric so that the associated operator
\begin{equation}
\label{eq:L-via-form-poly}
    \left\{\, \begin{aligned}
        \dom(L) &\coloneqq \{u\in H^m_0(\Omega) ~\colon\exists v\in L^2(\Omega), \fra[u,w] = \braket{v,w}\,\forall w\in H^m_0(\Omega)\} \\
        Lu &= v.
    \end{aligned}\right.
\end{equation}
is self-adjoint on $L^2(\Omega)$. Using Gagliardo-Nirenberg inequalities for intermediate derivatives, cf.\ \cite[Theorem 5.2]{AdamsFournier2003} and multiple applications of the classical Poincar\'{e} inequality $\|u\|_2 \le c\|\nabla u\|_2$ for $u\in H^1_0(\Omega)$, one can show that the norm
\begin{equation*}
    u\mapsto \begin{cases}
        \|\Delta^k u\|_2 \quad &\text{if }m=2k; \\
        \|\nabla(\Delta^k u)\|_2 \quad &\text{if }m=2k+1
    \end{cases}
\end{equation*}
is equivalent to the usual `full' $H^m$ norm on $H^m_0(\Omega)$. Consequently the form~\eqref{eq:polyharmonic-form} is coercive on $H^m_0(\Omega)$.

Let $L$ be an elliptic operator as defined either in~\eqref{eq:L-via-form} or~\eqref{eq:L-via-form-poly}. By replacing the associated form $\fra$ with $\fra+\gamma$ for a suitable $\gamma\in\bbR$, we may assume without loss of generality that $\fra$ is coercive on the form domain, namely there exists $\alpha>0$ such that
\begin{equation*}
    \re\fra[u,u] \ge \alpha\|u\|^2_{\dom(\fra)} \qquad\text{for all }u\in \dom(\fra);
\end{equation*}
where $\dom(\fra)=H^1(\Omega)$ in the second-order case and $\dom(\fra)=H^m_0(\Omega)$ in the polyharmonic case. Then the operator $L$ is injective and sectorial, and hence the fractional powers $L^s$, $s\in (0,1)$, are well-defined by the Dunford-Riesz functional calculus. Moreover, these operators have bounded inverses, and are also densely defined and closed. Details can be found in e.g.~\cite[Section II.5c]{EngelNagel} or~\cite[Section 2.6]{Pazy}.

\begin{theorem}
    \label{thm:frac-powers}
    Let $L$ be an elliptic operator as defined either in~\eqref{eq:L-via-form} or~\eqref{eq:L-via-form-poly} such that the associated form $\fra$ is coercive.

    For $s\in (0,1)$, the \emph{mixed local-nonlocal} operator
    \begin{equation*}
        -L + L^s
    \end{equation*}
    generates a $C_0$-semigroup $(S(t))_{t\ge 0}$ on $L^2(\Omega)$ such that
    \begin{equation*}
        \|S(t)\|_{2\to\infty} \le \widetilde Ct^{-n/{4m}}, \quad t\in (0,1] 
    \end{equation*}
    for a suitable constant $\widetilde C=C(s)>0$; note that $m=1$ in the second-order case.
\end{theorem}
    
For the proof of Theorem~\ref{thm:frac-powers}, we collect some properties of $-L$ in the following proposition.
\begin{proposition}
\label{prop:polyharmonic-semigroup-ultrac}
    Let $L$ be the polyharmonic operator~\eqref{eq:L-via-form-poly} and let $(T(t))_{t\ge 0}$ be the semigroup on $L^2(\Omega)$ generated by $-L$. Then $(T(t))_{t\ge 0}$ extrapolates to an analytic $C_0$-semigroup on $L^p(\Omega)$ for all $1\le p <\infty$ whose generator $L_p$ has domain
    \begin{equation*}
        \dom(L_p) = W^{2m,p}(\Omega)\cap W^{m,p}_0(\Omega).
    \end{equation*}
    Furthermore, $T(t)L^2(\Omega)\subset\Cont_0(\Omega)$ for all $t>0$ and there exists $C>0$ such that
    \begin{equation*}
        \|T(t)\|_{2\to\infty} \le C t^{-n/4m}
    \end{equation*}
    for all $t\in (0,1]$. Finally, $T$ also extrapolates to an analytic $C_0$-semigroup on $\Cont_0(\Omega)$.
\end{proposition}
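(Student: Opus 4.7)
The plan is to follow the same blueprint as in the second-order case (Proposition~\ref{prop:heat-semigroups}), combining classical results on higher-order uniformly elliptic operators available in the monographs \cite{Davies1990} and \cite{Arendt2004-survey}. The setup already ensures that $L$ is self-adjoint and nonnegative on $L^2(\Omega)$, so $-L$ generates an analytic $C_0$-semigroup of contractions on $L^2(\Omega)$. For the ultracontractivity estimate, I would start from the coercivity of $\fra$ on $H^m_0(\Omega)$ and the Sobolev embedding $H^m_0(\Omega) \hookrightarrow L^{2n/(n-2m)}(\Omega)$ (with the obvious modifications when $2m \ge n$) to derive a Nash-type inequality
\[
    \|u\|_{L^2}^{2+4m/n} \le C\, \fra[u,u]\, \|u\|_{L^1}^{4m/n}
\]
for $u \in H^m_0(\Omega) \cap L^1(\Omega)$. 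The standard Nash iteration then yields $\|T(t)\|_{L^1 \to L^2} \le C t^{-n/(4m)}$ for $t \in (0,1]$, and self-adjointness together with duality produces the claimed bound $\|T(t)\|_{L^2 \to L^\infty} \le C t^{-n/(4m)}$; composition further gives $\|T(t)\|_{L^1 \to L^\infty} \le C t^{-n/(2m)}$.

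The $L^1\to L^\infty$ estimate, via the Dunford-Pettis theorem, produces a measurable kernel $K_t$ with $\|K_t\|_\infty \le C t^{-n/(2m)}$ for $t \in (0,1]$. Consequently $T(t)$ extends consistently to bounded operators on $L^p(\Omega)$ for every $p \in [1,\infty]$, and strong continuity for $p \in [1,\infty)$ then follows from a standard density argument using strong continuity on $L^2$ and the density of $L^2 \cap L^p$ in $L^p$. The domain identification $\dom(L_p) = W^{2m,p}(\Omega) \cap W^{m,p}_0(\Omega)$ is a consequence of the classical Agmon--Douglis--Nirenberg regularity theory for higher-order elliptic operators on smooth domains, applied to the resolvent equation $(\lambda + L_p)u = f$. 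For analyticity on $L^p$ with $1<p<\infty$, I would apply Stein's complex interpolation theorem to the analytic $L^2$ semigroup and the uniformly bounded $L^1$--$L^\infty$ family, yielding analyticity in a sector whose angle can be chosen independently of $p$ (cf.\ the proof of \cite[Theorem~1.4.2]{Davies1990}).

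The main obstacle is extending analyticity to the endpoint $p=1$ and to $\Cont_0(\Omega)$, where Stein interpolation does not directly apply and the kernel is not positive so standard Gaussian bounds are unavailable. Here I would invoke the fact that for the polyharmonic operator on a smooth bounded domain the kernel $K_t(x,y)$ admits a holomorphic extension to complex times $z$ in a sector, with bounds of the form $|K_z(x,y)| \le C|z|^{-n/(2m)}$; see \cite[Chapter~7]{Davies1990}. Together with the strong continuity already established, this yields analyticity on $L^1(\Omega)$. Finally, the ultracontractivity combined with the Sobolev embedding $W^{2m,p}(\Omega) \cap W^{m,p}_0(\Omega) \hookrightarrow \Cont_0(\Omega)$ for $p > n/(2m)$ shows $T(t)L^2(\Omega) \subseteq \Cont_0(\Omega)$, and transferring the holomorphic kernel bounds to $\Cont_0(\Omega)$ yields analyticity of the restriction, with strong continuity derived from density of $\dom(L_p^k)$ for sufficiently large $k$.
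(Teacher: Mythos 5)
Your overall blueprint mirrors the paper's (elliptic regularity for the domain identification, ultracontractivity, then endpoint extrapolation), but the central step is handled by a genuinely different method, and that method has a gap.

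The paper does \emph{not} use a Nash-type inequality for the ultracontractivity estimate. It instead invokes the Agmon--Douglis--Nirenberg / Tanabe parabolic theory directly: the $L^p$ elliptic estimates identify $\dom(L_p)$, the same machinery produces the extrapolated analytic semigroups, and Tanabe's Gaussian-type kernel bound
\[
  \modulus{K(t,x,y)} \le C t^{-n/2m}\exp\!\Big(-c\,\tfrac{\modulus{x-y}^{2m/(m-1)}}{t^{1/(2m-1)}}\Big)
\]
yields the $2\to\infty$ estimate and the $L^1$ and $\Cont_0(\Omega)$ endpoint results. The inclusion $T(t)L^2(\Omega)\subset\Cont_0(\Omega)$ is obtained by iterating $T(t)L^2\subset\dom(L^k)\subset H^{2mk}(\Omega)\cap H^m_0(\Omega)\hookrightarrow\Cont(\overline\Omega)$ for large $k$ and observing the boundary condition, rather than by extrapolating to $L^p$ and applying $W^{2m,p}\cap W^{m,p}_0\hookrightarrow \Cont_0(\Omega)$.

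The main gap in your argument is the Nash iteration itself. The standard derivation of $\|T(t)\|_{1\to 2}\le Ct^{-n/4m}$ from the Nash inequality requires a priori control of $\|T(t)f\|_{L^1}$ in terms of $\|f\|_{L^1}$, which in the classical setting comes from positivity and the Markov (or submarkovian) property. As the paper notes in Remark~\ref{rmk:polyharmonic}(b), the polyharmonic Dirichlet semigroup is \emph{never} positive for $m\ge 2$, and there is no a priori $L^1$-contractivity; indeed the $L^1$-boundedness of $T(t)$ is part of what you are trying to prove (via the kernel). The Moser-iteration variant fares no better because truncation and chain-rule arguments applied to $|u|$ or $u^{q/2}$ are not compatible with forms involving derivatives of order $\ge 2$. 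So the clean derivation of the sharp exponent $n/4m$ via functional inequalities alone is not available here, and this is precisely why the paper relies on Tanabe's direct kernel estimates.

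Two further points. First, your citation of \cite[Chapter~7]{Davies1990} for holomorphic-in-time kernel bounds for polyharmonic operators on bounded domains is misdirected: that chapter concerns Schr\"odinger operators, and Davies' book does not treat higher-order operators with Dirichlet-type boundary conditions; the relevant complex-time kernel estimates and the $L^1$/$\Cont_0(\Omega)$ analyticity are found in Tanabe and in Arendt--ter~Elst type arguments, as the paper cites. Second, even granting the ultracontractivity estimate, you still owe an argument that $T(t)f$ is \emph{continuous} and vanishes on $\partial\Omega$ (not merely in $L^\infty$); the paper's route via $\dom(L^k)\subset H^{2mk}\cap H^m_0$ handles this cleanly, whereas your route through $L^p$-extrapolation and $W^{2m,p}$-Sobolev embeddings would need analyticity of the $L^p$ semigroup first, introducing a mild circularity in the order of the argument.
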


\begin{proof}
    Elliptic regularity for the polyharmonic Dirichlet boundary problem
    \begin{equation*}
        \left\{ \begin{aligned}
            (-\Delta)^m u &= f \quad\text{in }\Omega \\
            D^\alpha u &= 0 \quad\text{on }\partial\Omega
        \end{aligned}\right.
    \end{equation*}
    and much more general higher-order elliptic problems on smooth domains is a classical topic, of which a systematic study was presented in the seminal paper~\cite{ADN-1} of Agmon, Douglis, and Nirenberg. A self-contained account of this theory is given in~\cite[Chapters~4 and~5]{Tanabe}; see also~\cite[Chapter~2]{GGS} for a concise summary of basic results. From the $L^p$ elliptic estimates of~\cite[Theorem~5.3]{Tanabe}, and the fact that weak solutions are \emph{a priori} in $H^m_0(\Omega)$, we obtain
    \begin{equation*}
        \dom(L_p) = W^{2m,p}(\Omega)\cap W^{m,p}_0(\Omega);
    \end{equation*}
    note that $m$ in~\cite{Tanabe} corresponds to our $2m$. The same results lead to the extrapolated analytic semigroups on $L^p(\Omega)$ for $1<p<\infty$ -- see~\cite[Theorem~5.6]{Tanabe} and the preceding proofs -- as well as the estimate
    \begin{equation*}
        |K(t,x,y)| \le \frac{C}{t^{n/2m}}\exp\left(-c\frac{|x-y|^{2m/(2m-1)}}{t^{1/(2m-1)}}\right) \quad t>0, (x,y)\in\Omega\times\Omega
    \end{equation*}
    for the integral kernel associated to the semigroup $T$; cf.~\cite[Equation~(5.145)]{Tanabe}.
    
    Analyticity of the semigroup yields that for every $t>0$ and $k\in\bbN$, we have
    \begin{equation*}
        T(t)L^2(\Omega) \subset \dom(L^k) \subset H^{2mk}(\Omega)\cap H^m_0(\Omega).
    \end{equation*}
    The latter space embeds continuously into $\Cont(\overline{\Omega})$ for sufficiently large $k$, so that the boundary condition $u=0$ on $\partial\Omega$ is satisfied in the classical sense. Hence $T(t)L^2(\Omega)\subset \Cont_0(\Omega)$ as claimed. This yields the ultracontractivity, and the quantitative estimate is implied by the heat kernel bound.
    
    Finally, the kernel estimates lead to the strong continuity and analyticity of the semigroups on $L^1(\Omega)$ and $\Cont_0(\Omega)$. These properties are shown in~\cite[Sections~5.4 and~5.5]{Tanabe}; see also~\cite[Theorem~1.3]{AE19} and its proof~\cite[p.\ 779--780]{AE19}, which, having been implemented in an abstract way, can be adapted to our situation.
\end{proof}

\begin{remarks}
\label{rmk:polyharmonic}
    (a) Clearly, Proposition~\ref{prop:polyharmonic-semigroup-ultrac} also works with $m=1$ and thus contains the Dirichlet Laplacian as a special case.
    
    (b) Unlike the second-order case, semigroups generated by higher-order elliptic operators are, as a rule, never positive. This is due to the simple reason that the Beurling-Deny criterion is violated, see e.g.\ \cite[Proposition 3.5]{DenkKunzePloss2021}. However, as mentioned in the introduction of the article, such semigroups can nevertheless be \emph{eventually positive}.
\end{remarks}

\begin{proof}[Proof of Theorem~\ref{thm:frac-powers}]
    It suffices to verify the conditions of Theorem~\ref{thm:perturb-smoothing} with $X=L^2(\Omega)$ and $V=\Cont(\overline{\Omega})$ in the second-order case, and $V=\Cont_0(\Omega)$ for the Dirichlet polyharmonic operator. 

    (a) Ultracontractivity and analyticity of the semigroups were indicated in Propositions~\ref{prop:heat-semigroups} and~\ref{prop:polyharmonic-semigroup-ultrac} respectively.

    (b) By a standard result in the theory of analytic semigroups (see for instance~\cite[Chapter 2, Theorem 6.13]{Pazy}), for every $s\in (0,1)$ there exists $M_s>0$ such that
    \begin{equation}
    \label{eq:fractional-power-estimate}
        \|L^s T(t)\|_{2\to 2} \le M_s t^{-s} \qquad t\in (0,1].
    \end{equation}
    Since the function $t\mapsto t^{-s}$ is integrable near $0$ when $s\in (0,1)$, the condition~\ref{assume:it:admissibleX} in Setting~\ref{setting:admissible} is fulfilled by integrating the above inequality.

    (c) As mentioned in Propositions~\ref{prop:heat-semigroups} and~\ref{prop:polyharmonic-semigroup-ultrac}, the semigroup $T$ extrapolates to an analytic $C_0$-semigroup on $V$. Hence we may use the estimate~\eqref{eq:fractional-power-estimate} with the norm $\|\argument\|_{\infty\to\infty}$ instead of $\norm{\argument}_{2\to 2}$ on the left-hand side.

    (d) In both cases, $V$ is separable, i.e., condition~\ref{assume:bochner-V:itm:separable} is satisfied.
\end{proof}

\begin{remark}
    The arguments given above are of an abstract nature and thus could be used in diverse situations. One problem is that the fractional powers $L^s$ for a general elliptic operator $L$ are often difficult to handle in concrete problems. However, in our setting, since $L$ has compact resolvent, the fractional powers can be realised in a concrete way via the spectral decomposition of $L$. If $\{\lambda_k\}_{k\in\bbN} \subset [0,\infty)$ denote the eigenvalues of $L$ in increasing order and $\{\varphi_k\}_{k\in\bbN}$ is an associated orthonormal system of eigenfunctions in $L^2(\Omega)$, then we have
    \begin{equation*}
        L^s \coloneqq \sum_{k=1}^\infty \lambda_k^s \braket{\varphi_k, \argument}\varphi_k.
    \end{equation*}
    In the special case $L=-\Delta$, the operator $L^s$ is sometimes known as the \emph{spectral fractional Laplacian} and is of independent interest in PDE analysis. We point out in particular~\cite{CaffarelliStinga2016} and the references therein.

    By now, the literature on the fractional Laplacian is enormous and is a sub-field in its own right. In recent decades, mixed local-nonlocal operators have also attracted a lot of attention; see, for instance, the recent systematic study~\cite{Biagi-etal2022}. However, one must be careful of the many different approaches to defining the fractional Laplacian, so it is not automatically guaranteed that the abstract arguments above can be applied to other problems encountered in this area.
\end{remark}

\section{Applications and examples: eventually positive semigroups}
    \label{sec:eventual-positivity}

Evolution equations associated with second-order elliptic operators typically have a positivity-preserving property; in short, the time evolution is governed by a semigroup of positive operators. However, certain semigroups linked to higher-order operators, such as the Dirichlet bi-Laplacian, exhibit the more nuanced property of \emph{eventual positivity}. This means that orbits starting from positive initial data may change sign initially, but eventually become and remain positive. This phenomenon, first observed in infinite dimensions for the Dirichlet-to-Neumann semigroup \cite{Daners2014}, has gained significant attention over the past decade; see \cite{Glueck2022} for a survey.

In general, while the positivity of a semigroup is preserved under positive perturbations, eventual positivity does not necessarily exhibit the same robustness \cite[Example~2.1]{DanersGlueck2018a}.  However, under certain assumptions, sufficiently small positive perturbations can preserve the eventual positivity \cite[Theorems~4.2 and 4.9]{DanersGlueck2018a}. Leveraging results from the prequel, we provide sufficient conditions for the robustness of eventual positivity in semigroups on $L^2$-spaces, extending the existing framework to accommodate a broader class of (small) perturbations.

Recall that a linear operator $B:\dom(B)\subseteq H\to H$ on a Hilbert space $H$ is said to be $\emph{symmetric}$ if it is densely defined and
\[
    \braket{ Bx, y}_H =\braket{x, By}_H \quad \text{for all }x,y\in \dom(B).
\]
A symmetric operator is always closable \cite[p.\ 269]{Kato}.

We continue using the Banach lattice terminology recalled in the introduction. Recall that a Hilbert lattice is simply a Banach lattice whose norm is induced by an inner product. Actually, every Hilbert lattice is isometrically isomorphic to some $L^2(\Omega, \mu)$ for a measure space $(\Omega,\mu)$; see \cite[Theorem~IV.6.7]{Schaefer}. In what follows, for elements $x,y$ of a Banach lattice, we write $x\succeq y$ when there exists $c>0$ such that $x\ge cy$. Also, for $S,T\in \calL(E)$, we write $S\ge T$ if $Sx\ge Tx$ for all $x\in E$. 

Recall that the \emph{spectral bound} of an operator $A$ on a Banach space is defined by
\[
    \spb(A) \coloneqq \sup\{\re \lambda:\lambda \in \spec(A) \} \in [-\infty,\infty].
\]
 
\begin{theorem}
    \label{thm:eventual-positivity-main}
    Let $(\Omega,\mu)$ be a measure space and let $(T(t))_{t\ge 0}$ be a $C_0$-semigroup on the Banach lattice $E:=L^2(\Omega,\mu)$ with a self-adjoint and real generator $A$. Let $u\in E_+$ be such that  $(T(t))_{t\ge 0}$ is \emph{(individually) eventually strongly positive} with respect to $u$, i.e., for each $0\lneq x\in E$, there exists $t_0\ge 0$ such that $T(t)x\succeq u$ for all $t\ge t_0$.

    Let $B\in \calL(E_1,E)$ be a real and symmetric operator such that conditions~\ref{assume:it:ultracontractive}--\ref{assume:it:Vbounds} of Setting~\ref{setting:admissible} are satisfied with $X=E$ and $V=E_{u}$. Then there exists $\delta>0$ such that for all $\kappa \in (-\delta,\delta)$, there exist $\tau\ge 0$ and $\varepsilon>0$ such that the perturbed semigroup $(T_{\kappa}(t))_{t\ge 0}$ generated by $A+\kappa B$ satisfies
    \begin{equation*}
         e^{-t \spb(A+\kappa B)} T_{\kappa}(t)\ge \varepsilon \Braket{u,\argument}u \quad\text{for all } t\ge\tau.
    \end{equation*}
    In particular, $T_\kappa$ is analytic, eventually compact, and uniformly eventually strongly positive with respect to $u$.
\end{theorem}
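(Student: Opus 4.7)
My plan is to combine self-adjointness of the perturbed operator (preserved under symmetric relatively bounded perturbations), the preservation of ultracontractivity from Theorem~\ref{thm:perturb-smoothing}, and the eigenvalue perturbation statement of Theorem~\ref{thm:eigenfunction-lower-bound}, so as to establish the displayed pointwise lower bound and read off the auxiliary regularity properties from it. Since condition~\ref{assume:it:admissibleX} of Setting~\ref{setting:admissible} forces $B$ to be $A$-bounded with $A$-bound strictly less than one (Remark~\ref{rem:setting}(b)), the symmetry of $B$ together with the self-adjointness of $A$ allows the Kato--Rellich theorem to yield self-adjointness of $A+\kappa B$ on $\dom(A)$ for real $\kappa$ of sufficiently small modulus; its spectrum is then bounded above, so $(T_\kappa(t))_{t\ge 0}$ is an analytic $C_0$-semigroup. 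Moreover, $E_u$ satisfies condition~\ref{assume:bochner-V:itm:closed-ball} of Setting~\ref{setting:admissible} (as indicated just before the theorem statement) and $(T(t))_{t\ge 0}$ is immediately differentiable, so Theorem~\ref{thm:perturb-smoothing} combined with Remark~\ref{rmk:perturb-with-parameter} delivers ultracontractivity of $T_\kappa$ with respect to $E_u$, uniformly in $\modulus{\kappa}\le 1$.

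Next I extract the spectral structure. By characterizations of individually eventually strongly positive self-adjoint semigroups on $L^2$-spaces (see the survey~\cite{Glueck2022}), $\lambda_0\coloneqq\spb(A)\in\bbR$ is an algebraically simple pole of $\Res(\argument,A)$ with eigenvector $v_0\ge cu$ for some $c>0$, and being an isolated point of a self-adjoint spectrum it admits a strictly positive gap $\gamma\coloneqq \lambda_0-\sup\big(\spec(A)\setminus\{\lambda_0\}\big)$. Theorem~\ref{thm:eigenfunction-lower-bound} then yields $\delta>0$ such that for all real $\kappa\in(-\delta,\delta)$ the operator $A+\kappa B$ has an algebraically simple eigenvalue $\lambda(\kappa)$ near $\lambda_0$ with eigenvector $v(\kappa)\ge(c/2)u$. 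Shrinking $\delta$ further via the spectral continuity of Proposition~\ref{prop:spectral-perturb} lets us additionally ensure $\lambda(\kappa)=\spb(A+\kappa B)$ and that the perturbed spectral gap exceeds $\gamma/2$.

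Self-adjointness makes the spectral projection $P(\kappa)$ the orthogonal rank-one projection $\norm{v(\kappa)}^{-2}\braket{v(\kappa),\argument}v(\kappa)$, so $v(\kappa)\ge(c/2)u$ upgrades to $P(\kappa)f\ge c_1\braket{u,f}u$ for $f\in E_+$, with $c_1>0$ uniform in $\modulus{\kappa}<\delta$. Set $S_t\coloneqq e^{-t\lambda(\kappa)}T_\kappa(t)(I-P(\kappa))$; the spectral theorem on the orthogonal complement of $v(\kappa)$ yields $\norm{S_t}_{\calL(E)}\le e^{-t\gamma/2}$, and factoring $T_\kappa(t)=T_\kappa(1)T_\kappa(t-1)$ combined with the uniform ultracontractivity from the first paragraph upgrades this to $\norm{S_t}_{E\to E_u}\le M e^{-t\gamma/2}$ for $t\ge 1$, with $M>0$ uniform in $\kappa$. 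The hard part will be that this bound controls $\modulus{S_t f}$ only by $\norm{f}_E$, whereas the ratio $\norm{f}_E/\braket{u,f}$ is unbounded on $E_+$, which would destroy the uniformity of the strong-positivity estimate. The resolution is to exploit self-adjointness and duality: the companion bound $\norm{S_t}_{E_u'\to E}\le M e^{-t\gamma/2}$ together with the continuous embedding $E\hookrightarrow E_u'$ satisfying $\norm{f}_{E_u'}\le\braket{u,\modulus{f}}$ allows us to read the factorization $S_{2t}=S_t\circ S_t$ as $E\hookrightarrow E_u'\to E\to E_u$, producing the refined pointwise bound $\modulus{S_{2t}f}\le M^2 e^{-t\gamma}\braket{u,f}u$ in $E$ for $f\in E_+$. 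Combined with $P(\kappa)f\ge c_1\braket{u,f}u$ and choosing $\tau$ with $M^2 e^{-\tau\gamma/2}<c_1/2$, this yields $e^{-s\lambda(\kappa)}T_\kappa(s)\ge(c_1/2)\braket{u,\argument}u$ for $s\ge\tau$, which is the claimed inequality. Analyticity was recorded in the first paragraph, uniform eventual strong positivity is the content of the displayed estimate, and eventual compactness follows from the operator-norm convergence $e^{-s\lambda(\kappa)}T_\kappa(s)\to P(\kappa)$ in $\calL(E,E_u)$ toward the rank-one compact $P(\kappa)$, then transferred to $\calL(E)$ via the continuous inclusion $E_u\hookrightarrow E$.
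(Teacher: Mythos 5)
Your strategy differs genuinely from the paper's: the paper verifies the hypotheses of the uniform-eventual-positivity criterion \cite[Theorem~3.1]{DanersGlueck2018b} (via Lemma~\ref{lem:eventual-positivity-main}) and reads off the conclusion, whereas you rebuild the estimate from scratch, decomposing $e^{-t\lambda(\kappa)}T_\kappa(t)=P(\kappa)+S_t$ and controlling the remainder $S_t$ via a duality factorization through $E_u'$. That factorization argument — using self-adjointness to convert $\norm{S_t}_{E\to E_u}$-decay into a genuine pointwise bound $\modulus{S_{2t}f}\le M^2e^{-t\gamma}\braket{u,f}u$ for $f\ge 0$ — is the right idea and does go through. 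However, two of your auxiliary steps are defective. The more serious is the eventual-compactness claim: operator-norm convergence of $e^{-s\lambda(\kappa)}T_\kappa(s)$ to the rank-one projection $P(\kappa)$ tells you nothing about compactness of the approximating operators. The implication runs the wrong way — norm limits of compacts are compact, not the converse — so the observation that the remainder $S_s\to 0$ does not make $T_\kappa(s)$ compact. The paper obtains eventual compactness as part of the conclusion of \cite[Theorem~3.1]{DanersGlueck2018b}; without citing that result you would need to show directly that $T_\kappa(t)$ is compact for some $t$, which your argument does not do.

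The second, smaller issue is at the outset: you assert that $\lambda_0\coloneqq\spb(A)$ is an algebraically simple pole of $\Res(\argument,A)$ ``by characterizations of individually eventually strongly positive self-adjoint semigroups.'' In general $\spb(A)$ of a self-adjoint operator need not even be isolated, and the structure theorems for eventually positive semigroups you have in mind typically \emph{assume} that $\spb(A)$ is a pole. The paper first derives this from the standing ultracontractivity assumption: $T(1)E\subseteq E_u$ together with reflexivity of $E$ and \cite[Corollary~2.5]{DanersGlueck2017} forces all spectral values of $A$ to be poles of the resolvent, and only then do \cite[Theorem~5.1]{DanersGlueck2017} and \cite[Corollary~3.3]{DanersGlueckKennedy2016b} deliver simplicity and the lower bound $v_0\succeq u$. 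Your later step, ensuring $\lambda(\kappa)=\spb(A+\kappa B)$, is also compressed — the paper's Lemma~\ref{lem:eventual-positivity-main} uses a Neumann-series estimate for $B\Res(\lambda,A)$ on the ray $(\spb(A)+r,\infty)$ together with analyticity of the unperturbed semigroup — but for self-adjoint operators with $A$-bound less than one this can indeed be supplied, so I would regard that as an omission rather than an error.
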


\begin{remarks} 
    \label{rem:eventual-positivity}
    (a) For the proof of Theorem~\ref{thm:eventual-positivity-main}, we will employ a sufficient criterion for uniform eventual strong positivity of the semigroup stated in \cite[Theorem~3.1]{DanersGlueck2018b}. We point out that the statement of the aforementioned reference contains a small error:~since the proof involves replacing $A$ with $A-\spb(A)$, the correct conclusion should be
    \[
        e^{t(A-\spb(A))} \ge \varepsilon \Braket{\varphi,\argument}u \quad\text{for } t\ge t_0.
    \]
    
    (b) Even in the special case of self-adjoint semigroups, Theorem~\ref{thm:eventual-positivity-main} is more general than currently known results on perturbations of eventual positivity.
    Indeed, unlike~\cite[Theorems~4.2 and 4.9]{DanersGlueck2018a} and the results of~\cite[Section 4]{Pappu-etal}, Theorem~\ref{thm:eventual-positivity-main} does not require any (uniform) positivity of the resolvent in a neighbourhood of the spectral bound. Moreover, with the aid of Theorem~\ref{thm:eventual-positivity-main}, one can extend \cite[Example~4.10]{DanersGlueck2018a} to treat even unbounded perturbations.

    (c) The term \emph{uniform} eventual positivity refers to the fact the `time to positivity' $\tau$ can be chosen independently of the starting vector $x\in E_+\setminus\{0\}$.
    
    At first glance, the individual eventual positivity assumption on the unperturbed semigroup in Theorem~\ref{thm:eventual-positivity-main} appears surprisingly weak to guarantee uniform eventual positivity of the (un)perturbed semigroup. Nevertheless, owing to the self-adjointness assumption, individual eventual positivity is sufficient to imply its uniform counterpart, as established in \cite[Corollary~3.4]{DanersGlueck2018b}.
    
    (d) Naturally, one may ask why we are only able to obtain a perturbation result for uniform (rather than individual) eventual positivity in the present section. This is because in Theorem~\ref{thm:eigenfunction-lower-bound}, it is not possible to perturb a similar lower bound on the spectral projection. More precisely, if the spectral projection $P(0)$ corresponding to $\lambda_0$ in Theorem~\ref{thm:eigenfunction-lower-bound} satisfies $P(0)x\succeq u$  for each $x\in E_+\setminus \{0\}$, then this need not be true for the perturbed projections $P(\kappa)$ for any $\kappa$; indeed this can be checked for \cite[Example~2.4]{DanersGlueck2018a}. The reason for this is that eigenfunctions corresponding to the dual $A(\kappa)'$ may no longer be strictly positive; cf. \cite[Corollary~3.3]{DanersGlueckKennedy2016b}.

    (e) As the unperturbed semigroup in Theorem~\ref{thm:eventual-positivity-main} is self-adjoint, it is analytic. Therefore, if $\dom(A^n)\subseteq E_u$ for some $n\in \bbN$, then we can find $C\ge 1$ such that
    \[
        T(t)E\subseteq E_u \quad \text{and}\quad \norm{T(t)}_{E\to E_u} \le C t^{-n} 
    \]
    for all $t\in (0,n]$; see~\cite[Theorem~4.1]{AroraGlueck23} and its proof. In particular, the condition~\ref{assume:it:ultracontractive} in Setting~\ref{setting:admissible} is fulfilled. In concrete situations, one way to check the condition $\dom(A^n)\subseteq E_u$ is by Sobolev embeddings and elliptic regularity theory; see~\cite[Proposition 6.6]{DanersGlueckKennedy2016b} for an example involving the Dirichlet bi-Laplacian.
\end{remarks}

To facilitate the proof of Theorem~\ref{thm:eventual-positivity-main}, we establish in the subsequent lemma the spectral conditions as required for~\cite[Theorem~3.1]{DanersGlueck2018b}.

\begin{lemma}
    \label{lem:eventual-positivity-main}
    Let $(T(t))_{t\ge 0}$ be an analytic $C_0$-semigroup on a complex Banach lattice $E$ whose generator $A$ is real
    and let $u\in E_+$ be such that $(T(t))_{t\ge 0}$ is individually eventually strongly positive with respect to $u$, i.e., for each $x\in E_+ \setminus \{0\}$, there exists $t_0\ge 0$ such that $T(t)x\succeq u$ for all $t\ge t_0$.

    Let $B\in \calL(E_1,E)$ be a real operator such that the conditions~{\upshape\ref{assume:it:ultracontractive}}--{\upshape\ref{assume:it:Vbounds}} of Setting~\ref{setting:admissible} are satisfied with $X=E$ and $V=E_{u}$. If $\spb(A)\in \bbR$ is a pole of the resolvent $\Res(\argument,A)$, then
    there exists $\delta>0$ such that $\kappa \in (-\delta,\delta)$ implies that $\spb(A+\kappa B)$ is an algebraically simple eigenvalue of $A+\kappa B$ with an eigenvector $v(\kappa)\succeq u$.
\end{lemma}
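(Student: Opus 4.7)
The plan is to combine the existing theory of eventually positive semigroups with Theorem~\ref{thm:eigenfunction-lower-bound} from this paper. First, I would use individual eventual strong positivity to extract a dominant positive eigenvector at the unperturbed spectral bound $\lambda_0 := \spb(A)$: standard Perron-Frobenius type results for eventually positive semigroups (see \cite{DanersGlueckKennedy2016b} and the survey \cite{Glueck2022}) guarantee that, since $\lambda_0$ is a pole of the resolvent, it is an algebraically simple eigenvalue of $A$ with an eigenvector $v_0$ satisfying $v_0 \ge c u$ for some $c>0$.

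Once this is established, I would apply Theorem~\ref{thm:eigenfunction-lower-bound} to the holomorphic family $\kappa \mapsto A + \kappa B$, which is of type (A) in the sense of Kato since $B$ is $A$-bounded by Remark~\ref{rem:setting}(b). This yields $\delta_1 > 0$ such that, for each $\kappa \in (-\delta_1, \delta_1)$, the operator $A + \kappa B$ has an algebraically simple real eigenvalue $\lambda(\kappa) \in (\lambda_0 - r, \lambda_0 + r)$ with an eigenvector $v(\kappa) \ge (c/2) u$, so in particular $v(\kappa) \succeq u$. Moreover, by Theorem~\ref{thm:analyticity-spectrum}, $\lambda(\kappa)$ is the unique spectral value of $A + \kappa B$ in the open disk $B(\lambda_0, r) \subset \bbC$.

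The remaining and most delicate task is to identify $\lambda(\kappa)$ with $\spb(A + \kappa B)$, possibly after shrinking to some $\delta \in (0, \delta_1)$. Since $(T(t))_{t \ge 0}$ is analytic, $\spec(A)$ is contained in a sector with vertex at $\lambda_0$; this sectoriality, combined with the isolation of $\lambda_0$, yields $\eta > 0$ such that $\spec(A) \setminus \{\lambda_0\} \subseteq \{ z \in \bbC : \re z \le \lambda_0 - \eta \}$. Choosing $r < \eta$, it then suffices to show that no spectral value of $A + \kappa B$ lies in the strip $\{ \re z > \lambda_0 - \eta/2 \} \setminus \overline{B(\lambda_0, r)}$ for small $|\kappa|$. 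This follows from the upper semicontinuity of the spectrum under the holomorphic family (cf.\ \cite[Section~IV-3]{Kato} and Proposition~\ref{prop:spectral-perturb}), together with the fact that $A + \kappa B$ again generates an analytic semigroup for small $|\kappa|$ via the standard perturbation theorem for sectorial operators.

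I expect the principal obstacle to be this last step: ensuring uniform spectral control outside the small disk $B(\lambda_0, r)$ for all sufficiently small perturbations. Without sectoriality of the unperturbed generator, spectral values of $A + \kappa B$ could in principle drift into the half-plane $\{ \re z > \lambda(\kappa) \}$ at large imaginary parts, invalidating the identification $\lambda(\kappa) = \spb(A + \kappa B)$; the analyticity of $(T(t))_{t \ge 0}$ is precisely what rules this out.
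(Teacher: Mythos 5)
Your first two steps match the paper exactly: the paper likewise combines the Perron--Frobenius theory for individually eventually strongly positive semigroups (specifically \cite[Theorem~5.1]{DanersGlueck2017} and \cite[Corollary~3.3]{DanersGlueckKennedy2016b}) to obtain the eigenvector $v_0\succeq u$ at $\spb(A)$, notes that $V=E_u$ satisfies condition~\ref{assume:bochner-V:itm:closed-ball} via Example~\ref{exas:closed-unit-ball}\ref{exas:closed-unit-ball:itm:principal-ideal}, and then invokes Theorem~\ref{thm:eigenfunction-lower-bound}(b) to produce $\lambda(\kappa)$ and $v(\kappa)\succeq u$. Where you diverge is the identification $\spb(A+\kappa B)=\lambda(\kappa)$. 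You argue via uniform sectoriality of the perturbed generators together with upper semicontinuity of the spectrum on a compact exceptional set, which is a valid but somewhat heavier route (it needs a separate appeal to perturbation theory for sectorial operators to get sectoriality of $A+\kappa B$ uniformly in $\kappa$). The paper instead runs a direct Neumann series argument: it estimates $\norm{B\Res(\lambda,A)}\le(\modulus{\lambda}+b)\norm{\Res(\lambda,A)}+1$ from $A$-boundedness, uses analyticity of $(T(t))_{t\ge 0}$ (via \cite[Corollary~3.7.17]{ABHN}) to conclude $B\Res(\argument,A)$ is bounded to the right of $\spb(A)+r$, shrinks $\delta$ so that $\norm{\kappa B\Res(\lambda,A)}<1$ there, and factors $\lambda-(A+\kappa B)=(I-\kappa B\Res(\lambda,A))(\lambda-A)$ to push the resolvent set of $A+\kappa B$ up to $\spb(A)+r$, after which uniqueness in $(\spb(A)-r,\spb(A)+r)$ forces $\spb(A+\kappa B)=\lambda(\kappa)$. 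Both approaches exploit the analyticity of the unperturbed semigroup in the same spot and for the same reason you identify as the crux -- to keep spectrum from escaping to the right at large imaginary part -- but the paper's factorisation is more self-contained and avoids invoking upper semicontinuity of the spectrum.
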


\begin{proof}
    First, observe that $E_u$ is dense in $E$ due to the condition~\ref{assume:it:ultracontractive} in Setting~\ref{setting:admissible} in combination with the strong continuity of the semigroup. In particular, $u$ is a quasi-interior point of $E_+$.
    The eventual positivity assumption on the semigroup allows us to combine \cite[Theorem~5.1]{DanersGlueck2017} and \cite[Corollary~3.3]{DanersGlueckKennedy2016b} to obtain that the pole $\spb(A)$ is an algebraically simple eigenvalue of $A$ and the corresponding eigenspace contains a vector $v_0 \succeq u$. Additionally, by Example~\ref{exas:closed-unit-ball}\ref{exas:closed-unit-ball:itm:principal-ideal}, $V=E_u$ satisfies the condition~\ref{assume:bochner-V:itm:closed-ball} of Setting~\ref{setting:admissible}. 
    So, according to Theorem~\ref{thm:eigenfunction-lower-bound}(b), there exists $\delta >0$ and $r>0$ such that if $\kappa \in (-\delta,\delta)$, then $A+\kappa B$ has a unique algebraically simple eigenvalue $\lambda(\kappa)$ in $(\spb(A)-r,\spb(A)+r)$ with an eigenvector $v(\kappa)\succeq u$ and $\overline{B(\spb(A),r)}\cap \spec(A+\kappa B) = \{\lambda(\kappa)\}$.
 
    Thus, it suffices to show that $\spb(A+\kappa B)=\lambda(\kappa)$ for an arbitrary but fixed $\kappa$ with $\kappa\in(-\delta,\delta)$. As $B$ is $A$-bounded with $A$-bound strictly less than $1$ (Remark~\ref{rem:setting}(b)), the equality $A\Res(\lambda,A)=\lambda\Res(\lambda,A)-I$ allows us to obtain the estimate
    \[
        \norm{B\Res(\lambda,A)} \le \left(\modulus{\lambda}+b\right)\norm{\Res(\lambda,A)}+1
    \]
    for some $b>0$ and all $\lambda\in \rho(A)$. Therefore, employing the analyticity of the semigroup, we can ensure that $B\Res(\argument,A)$ remains bounded on 
    $H:= \{\lambda \in \bbC : \re \lambda >\spb(A)+r\}$; see \cite[Corollary~3.7.17]{ABHN}. Thus, making $\delta$ smaller if necessary, we can ensure that $\norm{\kappa B\Res(\lambda,A)}<1$ for all  $\lambda \in H$.
    This, along with the equality
    \[
        \lambda - (A+\kappa B) = (I - \kappa B\Res(\lambda,A))(\lambda-A),
    \]
    yields $H\subseteq \rho(A+\kappa B)$
    and in turn, $\spb(A+\kappa B)\le \spb(A)+r$. As $\overline{B(\spb(A),r)}$ contains only one spectral value of $A+\kappa B$, we get  $\spb(A+\kappa B)=\lambda(\kappa)$.
\end{proof}

\begin{proof}[Proof of Theorem~\ref{thm:eventual-positivity-main}]
    As noted in Lemma~\ref{lem:eventual-positivity-main}, the vector $u$ is a quasi-interior point of $E_+$. Also,
    by Example~\ref{exas:closed-unit-ball}\ref{exas:closed-unit-ball:itm:principal-ideal}, $V=E_u$ satisfies the condition~\ref{assume:bochner-V:itm:closed-ball} of Setting~\ref{setting:admissible}.
    Theorem~\ref{thm:perturb-smoothing} now implies that for each $\kappa \in (-1,1)$, the operator $A+\kappa B$ generates a $C_0$-semigroup that satisfies $T_{\kappa}(t)E\subseteq E_u$ for all $t>0$; cf. Remark~\ref{rmk:perturb-with-parameter}. 

    Reflexivity of $E$ and the assumption $T(1)E\subseteq E_u$ allow us to apply \cite[Corollary~2.5]{DanersGlueck2017} to conclude that all spectral values of $A$ are poles of the resolvent. Now,
    as $A$ is self-adjoint and $B$ is symmetric and $A$-bounded with $A$-bound strictly less than $1$ (Remark~\ref{rem:setting}(b)), so $A+\kappa B$ is self-adjoint by \cite[Theorem~V-4.3]{Kato}. Thus, $\spb(A+\kappa B)\in \bbR$ is a dominant spectral value of $A+\kappa B$ for all $\kappa \in (-1,1)$. Applying Lemma~\ref{lem:eventual-positivity-main} yields $\delta>0$ such that $\spb(A+\kappa B)$ is an algebraically simple eigenvalue of $A+\kappa B$ with an eigenvector $v(\kappa)\succeq u$ whenever $\kappa \in (-\delta,\delta)$.

    Next, since $u$ is a quasi-interior point of $E_+$, $u$ also denotes a strictly positive functional on $E$ by a characterization of quasi-interior points; see, for instance, \cite[Theorem~II.6.3]{Schaefer}. Thus, thanks to the self-adjointness of $A+\kappa B$ (and in turn the perturbed semigroup), the above conditions are also fulfilled for the dual. The assertions of uniform eventual strong positivity and eventual compactness are therefore a consequence of \cite[Theorem~3.1]{DanersGlueck2018b}; cf.\ Remark~\ref{rem:eventual-positivity}(a). The analyticity of the perturbed semigroup is simply due to the self-adjointness of $A+\kappa B$.
\end{proof}

In the proof of Theorem~\ref{thm:eventual-positivity-main}, we have implicitly used the fact that if $A$ is a densely defined real operator on a Hilbert lattice $E$, then its Banach space dual $A'$ coincides with the Hilbert space adjoint $A^*$; see~\cite[p.\ 10]{DanersGlueck2018b} for the details.

\subsubsection*{Nonlocal Robin boundary conditions}

We conclude the article with an application of Theorem~\ref{thm:eventual-positivity-main} to a second-order elliptic operator with non-standard boundary conditions.

In the setting of second-order elliptic operators with classical boundary conditions, as in Proposition~\ref{prop:heat-semigroups} for instance, the semigroup is positive and ultracontractive. The positivity can be lost if we consider non-standard boundary conditions. As a simple example of \emph{nonlocal} Robin boundary conditions on a bounded Lipschitz domain $\Omega\subset\bbR^n$, the multiplication by $\beta$ in the form~\eqref{eq:form-robin} can be replaced by a general bounded linear operator $N$ acting on $L^2(\partial\Omega)$:
\begin{equation}
    \label{eq:nonlocal-robin}
    \fra_B[u,v] \coloneqq \int_\Omega A\nabla u\cdot\overline{\nabla v}\dd x + \int_{\partial\Omega} (Nu)\overline{v}\dd\sigma, \quad u,v\in H^1(\Omega).
\end{equation}
This is not a new consideration, e.g.\ see~\cite{GesztesyMitrea2009, GesztesyMitreaNichols2014, AKK18} and~\cite[Section 6]{DanersGlueckKennedy2016b}; however, a systematic investigation of positivity properties (or lack thereof) of semigroups generated by such operators was initiated in the recent article~\cite{GlueckMui2024}. 
More complicated non-local terms, encoding dynamical boundary conditions of Wentzell-Robin type, were studied recently in~\cite{KunzeMuiPloss}.

Suppose $N\in \calL(L^2(\partial\Omega))$ is such that $N$ extrapolates to a bounded operator on $L^1(\partial\Omega)$ and $L^\infty(\partial\Omega)$ -- in particular, these conditions include the classical case where $N$ is multiplication by a bounded real-valued function $\beta$.
Then it is proved in~\cite[Theorem 3.2]{GlueckMui2024} that the semigroup $(T(t))_{t\ge 0}$ generated by $-L$, where $L$ is the elliptic operator arising from the form~\eqref{eq:nonlocal-robin}, satisfies
\begin{equation}
\label{eq:nonlocal-robin-ultrac}
    \|T(t)\|_{2\to\infty} \le ct^{-\mu/4}, \qquad t\in (0,1]
\end{equation}
for suitable constants $c,\mu>0$ depending only on the dimension $n$. Thus the ultracontractivity condition is satisfied. Sufficient conditions for (uniform) eventual positivity with respect to the positive principal eigenfunction of $L$ are given in~\cite[Theorem~4.4 and Theorem~5.5(ii)]{GlueckMui2024}.

Theorem~\ref{thm:eventual-positivity-main} shows that the eventual positivity of the semigroup can be preserved even with non-positive perturbations in the interior.
\begin{example}
    {
    Let $L:\dom(L)\subset L^2(\Omega)\to L^2(\Omega)$ be the elliptic operator associated to the form~\eqref{eq:nonlocal-robin}, where we assume in addition that the coefficient matrix $A(x)$ is symmetric for each $x\in\Omega$. Let $0 \ne v\in L^\infty(\partial\Omega;\bbR)$ satisfy $\int_{\partial\Omega} v \dd\sigma = 0$. Define the operator $N\in\calL(L^2(\partial\Omega))$ by
    \begin{equation*}
        Nf \coloneqq (v\otimes v)(f) \coloneqq \langle v,f \rangle_{L^2(\partial\Omega)}v, \qquad f\in L^2(\partial\Omega).
    \end{equation*}
    Then $N$ is real and self-adjoint, since $v$ is real-valued. Consequently the form $\mathfrak{a}_B$ is symmetric, so that $L$ and the associated semigroup $(T(t))_{t\ge 0}$ generated by $-L$ are self-adjoint.

    Clearly, $N$ extrapolates to a bounded operator on $L^1(\partial\Omega)$ and $L^\infty(\partial\Omega)$, satisfies $N\one_{\partial\Omega} = 0$, and $\langle Nf,f \rangle_{L^2(\partial\Omega)} \ge 0$ for all $f\in L^2(\partial\Omega)$.
    Thus~\cite[Theorem 4.4]{GlueckMui2024} applies to show that $(T(t))_{t\ge 0}$ is eventually strongly positive with respect to $\one_\Omega$, i.e., there exists $\tau\ge 0$ such that $T(t)f \succeq \one_{\Omega}$ for all non-zero $f\in L^2(\Omega)_+$ and $t\ge \tau$. However, it is shown in~\cite[Example 4.5(a)]{GlueckMui2024}} that $(T(t))_{t\ge 0}$ is \emph{not} positive.

    Theorem~\ref{thm:eventual-positivity-main} shows that some non-positive perturbations in $\Omega$ can preserve the eventual positivity of the semigroup. A simple example is given by a kernel operator
    \begin{equation}
    \label{eq:B-kernel-op}
        Bu \coloneqq \int_\Omega k(\argument,y)u(y) \dd y, \qquad u\in L^2(\Omega)
    \end{equation}
    where $k\in L^\infty(\Omega\times\Omega;\bbR)$ is symmetric, i.e.,  $k(x,y)=k(y,x)$ for all $x,y\in\Omega$, and no assumption on the sign of $k$ is needed. Then clearly $B$ is a real and symmetric operator, and is a bounded operator on $L^2(\Omega)$ that leaves $L^\infty(\Omega)$ invariant. Therefore, by~\eqref{eq:nonlocal-robin-ultrac} and Theorem~\ref{thm:ultracontractivity-introduction} applied to $X=L^2(\Omega)$ and $V=L^\infty(\Omega)$, we deduce that the perturbed semigroup $(S(t))_{t\ge 0}$ generated by $-L+B$ satisfies the ultracontractivity estimate
    \begin{equation*}
        \|S(t)\|_{2\to\infty} \le \widetilde{C}t^{-\mu/4}, \qquad t\in (0,1].
    \end{equation*}
    Furthermore, by Theorem~\ref{thm:eventual-positivity-main} and Remark~\ref{rem:holomorphic-family}, we deduce that if $\|B\|_{\calL(L^2(\Omega))} \le \delta$ for sufficiently small $\delta > 0$, then there exist $\varepsilon > 0$ and $\tau\ge 0$ such that the perturbed semigroup satisfies
    \begin{equation*}
        e^{-t\spb(-L+B)}S(t) \ge \varepsilon \langle\one_\Omega, \argument\rangle \one_\Omega \quad\text{for all }t\ge\tau.
    \end{equation*}
\end{example}

\begin{remark}
    Based on the results of~\cite{CaffarelliStinga2016} and the `nice' behaviour of heat semigroups on the scale of Sobolev spaces $W^{2s,p}(\Omega)$, it seems plausible that our abstract results could admit perturbations by operators of the form~\eqref{eq:B-kernel-op} where the kernel function $k$ has certain types of singularities, such as those encountered in the integral kernels of fractional Laplacians; see e.g.\ \cite[Theorem 2.4]{CaffarelliStinga2016}. However, we refrain from an in-depth investigation of such questions in this paper.
\end{remark}

\section*{Acknowledgements}
We thank Jochen Gl\"{u}ck for many valuable discussions, and in particular for his advice regarding Proposition~\ref{prop:bochner-V} and the proof of Theorem~\ref{thm:perturb-smoothing}. We are also grateful to Hendrik Vogt for pointing out an oversight in an earlier version of Lemma~\ref{lem:V-bounded}.

The work was primarily completed while the first author was funded by the Deutsche Forschungsgemeinschaft (DFG, German Research Foundation) Project 523942381. The second author is funded by the DFG Project 515394002. Parts of the work were done during the first author's visits to the University of Wuppertal and the second author's visits to the University of Twente.

\bibliographystyle{plainurl}
\bibliography{literature}

\appendix

\section{Perturbations of the spectrum}
    \label{appendix:spectrum}

In this appendix, we briefly review how relatively bounded perturbations preserve the spectral properties of closed operators.
To begin, we recall Kato's notion of \emph{generalised convergence} of closed operators. For closed subspaces $M, N$ of a Banach space $X$, define
    \begin{equation}
        \delta(M,N) := \sup_{u\in S_M}\dist(u,N), \quad \hat{\delta}(M,N) := \max\{\delta(M,N), \delta(N,M)\}
    \end{equation}
    with the convention that $\delta(\{0\},N) := 0$ for any subspace $N$; here $S_M$ denotes the unit sphere of $M$ in $X$. The quantity $\hat{\delta}$ is called the \emph{gap between $M$ and $N$}.
    Now let $X,Y$ be Banach spaces and suppose $A,B$ are closed operators from $X$ to $Y$. 
    The \emph{gap between $A$ and $B$} is defined as the gap between their respective graphs $\Gamma(A)$ and  $\Gamma(B)$, i.e.,
    \begin{equation}
    \label{eq:gap-AB}
        \hat{\delta}(A,B) := \hat{\delta}(\Gamma(A),\Gamma(B)).
    \end{equation}
    Note that~\eqref{eq:gap-AB} is well-defined since $\Gamma(A), \Gamma(B)$ are closed subspaces of the Banach space $X\times Y$.
    If $B$ is a bounded operator on $X$, then by~\cite[Theorem IV-2.14]{Kato}
    \begin{equation*}
        \hat{\delta}(A+B,A) \le \|B\|_{\calL(X)}.
    \end{equation*}
    With this definition at hand, we identify operators with their graphs and say that a sequence $(A_n)_{n\ge 1}\subset X\times Y$ of closed operators \emph{converges in the generalised sense} to a closed operator $A$ if
    \begin{equation*}
        \hat{\delta}(A_n, A) \to 0 \quad\text{as } n\to\infty.
    \end{equation*}
Many fundamental properties of the gap functional were derived in~\cite[Section IV-2]{Kato}. In particular, a characterisation of generalised convergence of closed operators via convergence of resolvent operators can be found in \cite[Theorem IV-2.25]{Kato}.
    
\begin{remark}
    Observe that $\delta(M,N)$ can be defined equivalently as
    \begin{equation*}
        \delta(M,N) := \inf\{c\ge 0 \suchthat \dist(u,N) \le c\|u\| \,\text{for all }u\in M\}.
    \end{equation*}     
    The definition of $\hat{\delta}$ resembles the Hausdorff distance for subsets of a metric space. However, $\hat{\delta}$ is not a proper distance function, since the triangle inequality need not be satisfied. One can in fact construct a distance function $\hat{d}$ on the set of all closed subspaces of a Banach space that is equivalent to $\hat{\delta}$ in the sense that
    \begin{equation*}
        \hat{\delta}(M,N) \le \hat{d}(M,N) \le 2\hat{\delta}(M,N)
    \end{equation*}
    always. Even so, the definition of the gap is less cumbersome and suitable for most purposes. See~\cite[Section IV-2.1]{Kato} for further details.
\end{remark}

The following proposition is a typical result on perturbations of the spectrum and is a natural generalisation of~\cite[Lemma 3.3]{DanersGlueck2018a}. The conclusions are of a standard nature -- indeed, they are consequences of more general arguments developed in the classic monograph~\cite{Kato} of Kato. However, since we do not require Kato's machinery in full generality, we  provide a statement that is adapted to our purposes.

    Recall that for linear operators $A,B : X\to Y$ between Banach spaces $X$ and $Y$, $B$ is said to be \emph{relatively bounded with respect to} $A$, or simply $A$-\emph{bounded}, if $\dom(A)\subseteq \dom(B)\subseteq X$ and there exist constants $a,b\ge 0$ such that
    \begin{equation}
    \label{eq:T-bound}
        \|Bu\|_Y \le a\|u\|_X + b\|Au\|_Y \quad\text{for all } u\in \dom(A).
    \end{equation}
    The infimum of all $b\ge 0$ for which inequality~\eqref{eq:T-bound} holds is called the $A$-\emph{bound} of $B$. If $B$ is $A$-bounded with $A$-bound $<1$ and $A$ is closed, then by~\cite[Theorem IV-1.1]{Kato}, the sum $A+B$ is a closed operator with $\dom(A+B)=\dom(A)$.

\begin{proposition}
    \label{prop:spectral-perturb}
    Let $X$ be a complex Banach space and let $A:\dom(A)\subseteq X\to X$ be a closed operator. Assume that $\lambda_0\in\spec(A)$ is a pole of the resolvent $\Res(\argument,A)$ with corresponding spectral projection $P_0$. Let $r>0$ be such that $\overline{B(\lambda_0,r)}\cap\spec(A) = \{\lambda_0\}$. Moreover, for every $A$-bounded operator $B$, define
    \[
        \beta_B \coloneqq \sup_{\modulus{\lambda-\lambda_0}=r} \norm{B \Res(\lambda,A)}.
    \]

    Then the following assertions hold:
    \begin{enumerate}[\upshape(a)]
        \item The convergence $A+B\to A$ is true in the generalised sense as $\beta_B\to 0$.
        \item There exists $\beta_0>0$ such that whenever $\beta_B \in (0,\beta_0)$, the operator $A+B$ has a unique spectral value $\lambda_B$ in $\overline{B(\lambda_0,r)}$ that is a pole of the resolvent $\Res(\argument,A+B)$ with the corresponding spectral projection $P_B$ satisfying $\dim\rg P_B=\dim\rg P_0$. 
        \item We have $\norm{P_B-P_0}\to 0$ as $\beta_B\to 0$.
    \end{enumerate}
    In addition, if $X$ is a complex Banach lattice, $\lambda_0\in\bbR$, and the operators $A,B$ are real, then $\lambda_B\in\bbR$.
\end{proposition}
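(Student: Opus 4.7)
The crux of the argument is a norm estimate on the resolvents over the circle $\Gamma \coloneqq \{\lambda \in \bbC : \modulus{\lambda-\lambda_0}=r\}$. On $\Gamma$, we factor
\[
\lambda - (A+B) = \bigl(I - B\Res(\lambda,A)\bigr)(\lambda - A),
\]
and since $\sup_{\lambda \in \Gamma} \norm{B\Res(\lambda,A)} = \beta_B < 1$ for sufficiently small $\beta_B$, the factor $I - B\Res(\lambda,A)$ is invertible by Neumann series. This yields
\[
\Res(\lambda,A+B) = \Res(\lambda,A)\sum_{k=0}^\infty \bigl(B\Res(\lambda,A)\bigr)^k
\]
uniformly on $\Gamma$, together with the bound $\sup_{\lambda \in \Gamma}\norm{\Res(\lambda,A+B) - \Res(\lambda,A)} = O(\beta_B)$ as $\beta_B \to 0$.

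Assertion~(a) is then immediate from the norm convergence of the resolvents at a single point $\lambda \in \Gamma$ combined with Kato's characterisation of generalised convergence via convergence of resolvents (see \cite[Section~IV-2]{Kato}). For assertion~(c), introduce the Riesz-type spectral projection
\[
P_B \coloneqq \frac{1}{2\pi i}\oint_\Gamma \Res(\lambda,A+B)\dd\lambda,
\]
which is well-defined for $\beta_B<1$ since $\Gamma \subset \rho(A+B)$, and the uniform resolvent estimate directly yields $\norm{P_B - P_0}\to 0$ as $\beta_B \to 0$. In particular, for $\beta_B$ smaller than some $\beta_0>0$, we have $\norm{P_B-P_0}<1$, and a classical projection lemma (see e.g.~\cite[Lemma~I-4.10]{Kato}) gives $\dim\rg P_B = \dim\rg P_0$.

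For assertion~(b), the general theory of isolated spectral sets identifies $\spec(A+B)\cap \overline{B(\lambda_0, r)}$ with the spectrum of the restriction $(A+B)\restricted{\rg P_B}$. Since $\lambda_0$ is a pole of $\Res(\argument, A)$, the projection $P_0$ is non-zero, so $P_B \ne 0$ and the spectrum inside the disc is non-empty. When $\rg P_B$ is finite-dimensional---which is the relevant case in our applications, where $\dim P_0 = 1$---this restriction is a matrix, whose spectrum consists of eigenvalues each a pole of $\Res(\argument, A+B)$, and the uniqueness of $\lambda_B$ is immediate. The final statement about real operators follows from a symmetry argument: realness of $A$ and $B$ forces $\spec(A+B)$ to be invariant under complex conjugation, so both $\lambda_B$ and $\overline{\lambda_B}$ lie in $\overline{B(\lambda_0,r)}$, whence the uniqueness compels $\lambda_B = \overline{\lambda_B} \in \bbR$.

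The principal technical subtlety is identifying $\lambda_B$ as a genuine \emph{pole} of the resolvent, rather than merely an isolated point of the spectrum, and pinning down its uniqueness. This is most transparent when $\dim \rg P_0 < \infty$ (in particular when $\dim P_0 = 1$, which suffices for the applications in Section~\ref{sec:analytic}), since $(A+B)\restricted{\rg P_B}$ is then a linear map on a finite-dimensional space.
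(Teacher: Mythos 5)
Your handling of assertion (a) and of the final reality claim coincides with the paper's proof: the same factorisation $\lambda-(A+B)=\bigl(I-B\Res(\lambda,A)\bigr)(\lambda-A)$, the same Neumann-series bound giving $\norm{\Res(\lambda,A+B)-\Res(\lambda,A)}\le \tfrac{\beta_B}{1-\beta_B}\norm{\Res(\lambda,A)}$ on the circle, Kato's resolvent characterisation of generalised convergence, and the conjugation-plus-uniqueness argument for $\lambda_B\in\bbR$. Where you diverge is in (b) and (c): the paper deduces both in one stroke from (a) by citing \cite[Theorem~IV-3.16]{Kato}, whereas you re-derive the relevant content by hand via the contour integral for $P_B$, the uniform resolvent estimate on $\Gamma$, and the pair-of-projections lemma for $\dim\rg P_B=\dim\rg P_0$. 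That is a sound and more self-contained route to (c) and to the dimension count, and is essentially the proof of Kato's theorem unpacked.

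The gap is in your uniqueness argument for (b). Finite-dimensionality of $\rg P_B$ does not make uniqueness ``immediate'': if $\dim P_0=m>1$, the restriction $(A+B)\restricted{\rg P_B}$ is an $m\times m$ matrix which may have $m$ distinct eigenvalues, i.e.\ the eigenvalue can split under perturbation. Concretely, $A=0$ and $B=\operatorname{diag}(\varepsilon,-\varepsilon)$ on $\bbC^2$ give $\beta_B=\varepsilon/r\to 0$ yet two distinct poles $\pm\varepsilon$ inside the disc; so uniqueness genuinely requires $\dim P_0=1$, and your argument only proves (b) in that case. You flag this restriction yourself, and it is the only case in which the proposition is used (Theorems~\ref{thm:analyticity-spectrum} and~\ref{thm:eigenfunction-lower-bound} assume $\lambda_0$ algebraically simple); note that the splitting example shows the uniqueness claim as literally stated is false for $\dim P_0>1$, and the paper's appeal to \cite[Theorem~IV-3.16]{Kato} does not deliver it either -- that theorem only gives the dimension count, not a single spectral value. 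Likewise, your identification of the spectral values inside $\Gamma$ as genuine poles uses $\dim\rg P_B<\infty$; an infinite-rank pole would need a separate argument. In short: same approach as the paper for (a) and the reality claim, a legitimate explicit derivation of (c), but (b) is only established under algebraic simplicity.
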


\begin{proof}
    According to \cite[Theorem~IV-3.16]{Kato}, the assertions~(b) and~(c) follow from~(a).
    To see~(a),
    first of all, note that the finiteness of $\beta_B$ is guaranteed by the $A$-boundedness of $B$. Next, let $C_r$ denote the positively oriented circle of radius $r>0$ centred at $\lambda_0$. Since $\lambda I - (A+B) = (I - B\Res(\lambda,A))(\lambda I - A)$, so whenever $\beta_B<1$, a Neumann series expansion yields that $\lambda I - (A+B)$ is invertible with
    \[
        \Res(\lambda, A+B) = \Res(\lambda,A)[I-B\Res(\lambda,A)]^{-1} = \Res(\lambda,A) \sum_{k=0}^\infty [B\Res(\lambda,A)]^k
    \]
    and in turn,
    \[
        \norm{\Res(\lambda,A+B)-\Res(\lambda,A)} \le \frac{\beta_B}{1-\beta_B} \norm{\Res(\lambda,A)}
    \]
    for all $\lambda\in C_r$. Fixing $\lambda \in C_r$, it follows that $\norm{\Res(\lambda,A+B)-\Res(\lambda,A)}\to 0$ as $\beta_B\to 0$. 
    Therefore, $A+B\to A$ in the generalised sense as $\beta_B\to 0$, by the characterisation of generalised convergence in \cite[Theorem IV-2.25]{Kato}.

    Finally, assume that $\lambda_0\in \bbR$ and the operators $A,B$ are real on the complex Banach lattice $X$. If $\lambda_B\notin\bbR$, then $\overline{\lambda_B}$ is another spectral value of $A+B$ in the disk $B(\lambda_0,r)$, which contradicts assertion (b). 
\end{proof}

\begin{remark}
    Observe that in Proposition~\ref{prop:spectral-perturb}, choosing $r>0$ sufficiently small yields a spectral value of $A+B$ arbitrarily close to $\lambda_0$.
\end{remark}

\section{Bochner integrals}
    \label{appendix:bochner}

In this appendix, we give a technical result about Bochner integration which might be well known to experts, but we could not locate an explicit reference.
\begin{proposition}
    \label{prop:bochner-V}
    Let $X$ be a Banach space, and let $f: (0,\tau) \to X$ be a strongly measurable function for some fixed $\tau>0$. Assume that $V$ is a Banach space embedding continuously into $X$ such that
    $f(t) \in V$ a.e. $t \in (0,\tau)$ and there exists a function $\beta\in L^1((0,\tau),\bbR_+)$ such that $\norm{f(t)}_V \le \beta(t)$ a.e. $t \in (0,\tau)$.

    Then $f$ is Bochner integrable with values in $X$.
    In addition, suppose that at least one of the following conditions hold:
    \begin{enumerate}[\upshape(a)]
        \item\label{lem:bochner-V:itm:separable} 
        The Banach space $V$ is separable. 

        \item\label{lem:bochner-V:itm:closed-ball} 
        The closed unit ball of $V$ is closed in $X$.
    \end{enumerate}
    Then
    \begin{equation}
        \label{eq:bochner-V-estimate}
        \int_0^{\tau} f(t) \dd t \in V \quad\text{and}\quad \norm{\int_0^{\tau} f(t) \dd t}_V \le \int_0^{\tau} \beta(t) \dd t.
    \end{equation}
\end{proposition}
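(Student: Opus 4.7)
The Bochner integrability of $f$ into $X$ is immediate: continuity of $V \hookrightarrow X$ yields a constant $C>0$ with $\norm{f(t)}_X \le C\beta(t) \in L^1(0,\tau)$, which combined with the strong measurability of $f$ into $X$ gives $X$-Bochner integrability by the classical Bochner criterion. The content of the proposition is therefore to establish~\eqref{eq:bochner-V-estimate} under each of the two conditions.

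\emph{Under~\ref{lem:bochner-V:itm:closed-ball}.} I would fix $\varepsilon>0$ and partition $(0,\tau)$ up to a null set as $\bigsqcup_{k\ge 0} E_k$ with $E_k \coloneqq \{t \suchthat k\varepsilon \le \beta(t) < (k+1)\varepsilon\}$. Each $E_k$ has finite measure, and on $E_k$ the function $f$ takes values in the convex set $(k+1)\varepsilon B_V$, where $B_V$ denotes the closed unit ball of $V$ viewed inside $X$. The standard Hahn--Banach argument for Bochner integrals shows that $\tfrac{1}{\modulus{E_k}}\int_{E_k} f\dd t$ lies in the $X$-closed convex hull of $f(E_k)$, which by~\ref{lem:bochner-V:itm:closed-ball} is contained in $(k+1)\varepsilon B_V \subseteq V$. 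Hence $\int_{E_k} f\dd t \in V$ with $\norm{\int_{E_k} f\dd t}_V \le (k+1)\varepsilon\modulus{E_k}$, and summing gives
\[
    \sum_{k=0}^\infty \norm{\int_{E_k} f \dd t}_V \le \sum_{k=0}^\infty (k+1)\varepsilon\modulus{E_k} \le \int_0^\tau \beta(t)\dd t + \varepsilon\tau.
\]
The series $\sum_k \int_{E_k} f \dd t$ therefore converges absolutely in $V$, and continuity of the embedding identifies its $V$-sum with the $X$-integral $\int_0^\tau f\dd t$. Letting $\varepsilon \downarrow 0$ yields~\eqref{eq:bochner-V-estimate}.

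\emph{Under~\ref{lem:bochner-V:itm:separable}.} My strategy would be to upgrade the $X$-strong measurability of $f$ to $V$-strong measurability; once this is in hand, the bound $\norm{f(\argument)}_V \le \beta \in L^1$ gives $V$-Bochner integrability immediately, with the $V$- and $X$-integrals agreeing through the continuous embedding, delivering both $\int f\dd t \in V$ and the norm estimate. For the upgrade, I would first pass to the closed linear span $W \subseteq X$ of the (essentially separable) range of $f$ together with a countable dense subset of $V$. Then $W$ is a separable closed subspace of $X$ containing $V$, and $f$ still takes values in $W$ almost everywhere. Both $V$ and $W$ are now separable Banach spaces, hence Polish, and $V \hookrightarrow W$ is a continuous injection. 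The Lusin--Souslin theorem then guarantees that $V$ is a Borel subset of $W$ and that the identity map $V \to W$ is a Borel isomorphism onto its image. Consequently $f$, regarded as a $V$-valued map, is Borel measurable, and since $V$ is separable, Borel measurability coincides with $V$-strong measurability.

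The main obstacle lies in case~\ref{lem:bochner-V:itm:separable}: $X$-strong measurability does not formally produce $V$-strong measurability because the $V$-topology is strictly finer than the $X$-subspace topology on $V$, so $V$-open balls need not be $X$-Borel. The descriptive-set-theoretic input via Lusin--Souslin is precisely what bridges this gap, and it relies on the preliminary reduction to separable ambient $X$ in order to ensure both spaces are Polish.
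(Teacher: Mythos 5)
Your proposal is correct, and worth comparing to the paper's proof because both halves take a somewhat different route.

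For condition~\ref{lem:bochner-V:itm:closed-ball}, the underlying tool is the same: the mean of a Bochner integrable function over a probability space lies in any closed convex set that carries the essential range, which is proved by Hahn--Banach separation and appears as \cite[Proposition~1.2.12]{HytoenenvanNeervanVeraarWeis2016a}. The paper applies this directly by normalising $\beta(t)\dd t$ to a probability measure on the set $\{\beta>0\}$ and showing the rescaled integrand $f/\beta$ lands in the closed unit ball of $V$. You instead truncate $\beta$ into its level sets $E_k$, push the closed-convex-hull principle through each piece, and sum the resulting $V$-norm estimates, sending $\varepsilon\downarrow 0$ at the end. The two arguments are essentially equivalent in effort; the paper's rescaling is a touch more economical since it avoids the $\varepsilon$-approximation and the absolute-convergence step, while yours avoids the slight awkwardness of dividing by $\beta$ and has a more elementary feel. (One small imprecision: the mean lies in the closed convex hull of the \emph{essential} range of $f$ on $E_k$, not of $f(E_k)$ literally, but this makes no difference to the conclusion.)

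For condition~\ref{lem:bochner-V:itm:separable}, your route is genuinely different and substantially heavier. The paper observes that the restrictions $x'|_V$ for $x'\in X'$ form a norm-separating family in $V'$ (because the embedding $V\hookrightarrow X$ is injective, so Hahn--Banach on $X$ already separates points of $V$), that $x'\circ f$ is measurable, and then invokes the separable-range version of the Pettis measurability theorem \cite[Corollary~1.1.3]{ABHN} to conclude $V$-strong measurability in one line. You instead reduce to a separable ambient space $W$, and use the Lusin--Souslin theorem to transfer Borel measurability along the continuous injection $V\hookrightarrow W$. Your argument is correct -- the reduction is valid (the $X$-closure of a $V$-dense set contains $V$ by continuity of the embedding, so $V\subseteq W$), Lusin--Souslin applies since both spaces are Polish, and Borel measurability agrees with strong measurability for separable-range functions -- but it imports descriptive set theory where a standard weak-measurability criterion suffices. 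The Pettis route is the natural one here precisely because the hypothesis hands you a separating family of functionals for free; it is worth recognising that pattern, as it recurs throughout the Bochner integration literature.
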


Condition~\ref{lem:bochner-V:itm:separable} or~\ref{lem:bochner-V:itm:closed-ball} is satisfied in many concrete situations; see Example~\ref{exas:closed-unit-ball} below. Hence it is tempting to conjecture that the conditions are redundant. However, this is false, as can be seen from a counterexample due to David Gao on MathOverflow~\cite{MO-Gao}.

\begin{proof}[Proof of Proposition~\ref{prop:bochner-V}]
    As $V$ embeds continuously into $X$, there exists a constant $C > 0$ such that $\norm{f(t)}_X \le C \norm{f(t)}_V \le C \beta(t)$ for almost all $t \in (0,\tau)$. Therefore, we infer from the integrability of $\beta$ and measurability of $f$ that the latter is Bochner integrable with values in $X$. 

    Now we show that each of the assumptions~\ref{lem:bochner-V:itm:separable} and~\ref{lem:bochner-V:itm:closed-ball} implies~\eqref{eq:bochner-V-estimate}.

    \ref{lem:bochner-V:itm:separable}
    For every functional $x' \in X'$, the scalar-valued function $(x'|_V) \circ f = x' \circ f$ is measurable. 
    As the set $\{x'|_V : \, x' \in X'\}$ is a separating subset of $V'$ and $V$ is separable, $f$ is strongly measurable with values in $V$ \cite[Corollary~1.1.3]{ABHN}. 
    The norm estimate on $f$ now ensures that $f$ is Bochner integrable as a $V$-valued function \cite[Theorem~1.1.4]{ABHN}.
    As $V$ embeds continuously into $X$, the Bochner integral of $f$ in $V$ coincides with its Bochner integral in $X$, which proves~\eqref{eq:bochner-V-estimate}.

    \ref{lem:bochner-V:itm:closed-ball} We begin with a general result which follows immediately from~\cite[Proposition 1.2.12]{HytoenenvanNeervanVeraarWeis2016a}:~Let $E$ be any Banach space, $(\Omega,\mu)$ be a probability space, and $\xi:\Omega\to E$ be a Bochner-integrable random variable. If $K\subset E$ is a closed convex subset such that $\xi\in K$ almost surely, then the expectation of $\xi$ also lies in $K$, i.e.,
    \begin{equation*}
        \int_\Omega \xi \dd\mu \in K.
    \end{equation*}
    
    Now consider the set $J \coloneqq \{ t\in (0,\tau) ~\colon \beta(t) > 0 \}$, which we assume has positive Lebesgue measure (otherwise $f(t)=0$ for a.e.\ $t$ and we are done). Then $g\coloneqq\frac{f}{\beta}$
    is well-defined on $J$. Set
    \[
        M \coloneqq \int_{J}\beta(t)\dd t = \int_0^{\tau} \beta(t)\dd t\quad\text{and}\quad\dd\mu(t) \coloneqq M^{-1}\beta(t)\dd t.
    \]
    Clearly, $\mu$ is a probability measure on $J$. The assumptions on $f$ imply that $g$ lies in the closed unit ball of $V$, which is a convex set and closed in $X$ by assumption. By the result quoted above, it thus follows that
    \[
        M^{-1}\int_{(0,\tau)} f(t)\dd t = \int_J g(t)\dd\mu(t)
    \]
    lies in the closed unit ball of $V$ and in turn~\eqref{eq:bochner-V-estimate} holds.
\end{proof}

Recall that a \emph{KB-space} is a Banach lattice in which the norm-bounded increasing sequences are norm-convergent. Standard examples of KB-spaces include all $L^p$-spaces with $p\in [1,\infty)$ and in fact, all reflexive Banach lattices. A subspace $I$ of a Banach lattice $E$ is said to be \emph{lattice ideal} if for each $x,y\in E$, the conditions $0\le x\le y$ and $y\in I$ imply that $x\in I$.

\begin{examples}
    \label{exas:closed-unit-ball}
    \leavevmode
    We collect some examples of Banach spaces $X$ and $V$ such that $V$ embeds continuously into $X$ and the closed unit ball of $V$ is closed in $X$:
    \begin{enumerate}[label=(\alph*)]
        \item\label{exas:closed-unit-ball:itm:L_p} 
        For every finite measure space $(\Omega,\mu)$ and $1 \le p \le \infty$, it follows from part~\ref{exas:closed-unit-ball:itm:principal-ideal} below that the closed unit ball of $V := L^\infty(\Omega,\mu)$ is closed in $X := L^p(\Omega,\mu)$. 

        \item\label{exas:closed-unit-ball:itm:reflexive}
        If $V$ is reflexive, then its closed unit ball is closed in $X$. Indeed, the reflexivity of $V$ guarantees that its closed unit ball is weakly compact in $V$. Continuity of the embedding $V \hookrightarrow X$ hence guarantees that the closed unit ball of $V$ is also weakly compact in $X$. In turn, it is (weakly) closed in $X$.
        
        \item\label{exas:closed-unit-ball:itm:Lip} 
        Let $V\coloneqq \Lip(M)$ be the space of all Lipschitz continuous functions on functions on a bounded metric space $M$ endowed with the norm $\norm{\argument}\coloneqq \Lip(\argument)+\norm{\argument}_\infty$, where $\Lip(f)$ denotes the Lipschitz constant of a function $f$. 
        If $(f_n)$ is a sequence of uniformly bounded Lipschitz continuous functions in $V$ that converges uniformly to a bounded continuous function $f$, then $f$ is also Lipschitz with the same Lipschitz constant. Hence the closed unit ball of $V$ is closed in $X\coloneqq \big(\Cont_{\mathrm{b}}(M),\norm{\argument}_{\infty}\big)$.  

        \item\label{exas:closed-unit-ball:itm:principal-ideal}
        If $X$ is a Banach lattice and $u \in X_+$, then the closed unit ball of the principal ideal $V\coloneqq X_u$ (endowed with the gauge norm) is closed in $X$; see Section~\ref{sec:notation} for the relevant definitions.

        To see this, let $(v_n)$ be a sequence in the closed unit ball of $V$ converging in $X$ to a vector $x\in X$. By definition of the gauge norm, $\modulus{v_n}\le u$ for all $n\in\bbN$ and by continuity of lattice operations $(\modulus{v_n})$ converges to $\modulus{x}$ in $X$. In particular, $\modulus{x}\le u$ and so $x\in V$, as desired.

        \item\label{exas:closed-unit-ball:itm:ideal-kb}
        If $X$ is a Banach lattice, $V$ is a KB-space, and $V$ is a lattice ideal in $X$, then the closed unit ball of $V$ is closed in $X$. 

        This can be argued as follows:~let $(v_n)$ be a sequence in the closed unit ball $B_V$ of $V$ that $X$-converges to a vector $x \in X$. Without loss of generality, we assume that $(v_n)$ is a sequence in the positive unit ball $(B_V)_+$ of $V$ and that $x \in X_+$. 
        Indeed, the sequences of positive and negative parts $(v_n^+)$ and $(v_n^-)$ lie in $(B_V)_+$ and converge in $X$ to $x^+$ and $x^-$, respectively (note that it does not matter whether we apply the lattice operators to the $v_n$ in $V$ or in $X$ since $V$ is an ideal and hence a sublattice of $X$). We need to show that $x\in (B_V)_+$.
        
        In fact, we can also assume that $v_n \le x$ for each index $n$. This is because $V$ is an ideal in $X$, and so the sequence $(v_n \land x)$ is also located in $(B_V)_+$. From the continuity of the lattice operations, we have $v_n \wedge x \to x$ in $X$ as well. 
        
        Replace $(v_n)$ with a subsequence to achieve that $\sum_{n=1}^\infty \norm{x-v_n}_X < \infty$ and then define $u_n := \bigwedge_{k=n}^\infty v_k$ for each $n \in \bbN$, where the infimum exists in $V$ and thus in $X$ (as $V$ is an ideal in $X$). 
        Then $u_n \le x$ and $u_n \in (B_V)_+$ for each $n$. 
        The sequence $(u_n)$ is increasing and $X$-converges to $x$ since
        \[
            \quad \qquad 
            0 
            \le 
            x - u_n 
            = 
            x - \bigwedge_{k=n}^\infty v_k 
            = 
            \bigvee_{k=n}^\infty (x-v_k) 
            \le 
            \sum_{k=n}^\infty (x-v_k)
            \to 
            0
        \]
        for $n \to \infty$. 
        On the other hand, as $V$ is a KB-space and $(u_n)$ is an increasing sequence in its positive unit ball $(B_V)_+$, it follows that $(u_n)$ converges in $V$ to a vector $u \in (B_V)_+$. 
        As $V$ embeds continuously into $X$, it follows that $x = u \in (B_V)_+$.
    \end{enumerate}
\end{examples}

\begin{remark}
    In~Example~\ref{exas:closed-unit-ball}\ref{exas:closed-unit-ball:itm:ideal-kb}, one cannot drop the assumption that $V$ is a KB-space:~As a counterexample, let $V = c_0$ and let $X = \ell^1(\bbN, \mu)$, where $\mu$ is the probability measure on $\bbN$ given by $\mu(A) = \sum_{k \in A} \frac{1}{2^k}$ for each $A \subseteq \bbN$. 
        Then $V$ is a lattice ideal in $X$  but $V$ is not a KB-space.  
        The sequence $(\one_{\{1,\dots,n\}})$ in the positive unit ball of $V$ converges in $X$ to the constant sequence $\one$, but $\one \not\in V$.
\end{remark}

\end{document}